\documentclass[11pt,letterpaper,reqno]{amsart}
\usepackage{vmargin,color}
\usepackage[latin1]{inputenc}
\usepackage{amsmath,amsfonts,amsthm,epsfig,graphicx,amssymb}
\usepackage{caption}

\usepackage{mathtools}   
\mathtoolsset{showonlyrefs}
\usepackage{hyperref}

\def\F{\mathcal{F}}

\def\H{\mathcal{H}}

\def\p{\mathbf{p}}

\def\N{\mathbb N}

\def\R{\mathbb R}

\def\Om{\Omega}
\def\Si{\Sigma}

\def\a{\alpha}

\def\e{\varepsilon}
\def\k{\kappa}
\def\l{\lambda}
\def\s{\sigma}

\def\vphi{\varphi}

\def\Id{{\rm Id}\,}
\def\dist{{\rm dist}}

\def\diam{{\rm diam}}

\def\id{{\rm id}}
\def\cof{{\rm cof}}
\def\dom{{\rm dom}}

\def\weak{\rightharpoonup}
\def\weakstar{\stackrel{*}{\rightharpoonup}}

\def\ov{\overline}
\def\pa{\partial}

\def\bd{{\rm bd}}
\def\INT{{\rm int}\,}

\def\Div{{\rm div}}
\def\hd{{\rm hd}}
\DeclareMathOperator*{\supp}{{\rm supp}}

\newcommand{\overbar}[1]{\mkern1.5mu\overline{\mkern-1.5mu#1\mkern-1.5mu}\mkern 1.5mu}

\newtheorem{theorem}{Theorem}[section]
\newtheorem{remark}[theorem]{Remark}
\newtheorem{definition}[theorem]{Definition}

\newtheorem{proposition}[theorem]{Proposition}
\newtheorem{lemma}[theorem]{Lemma}

\newenvironment{manualtheorem}[1]{%
  \manualtheoreminner
}{\endmanualtheoreminner}

\numberwithin{equation}{section}
\numberwithin{figure}{section}

\pagestyle{plain}

\setcounter{tocdepth}{1}

\begin{document}

\title{A Heintze-Karcher inequality with free boundaries and applications to capillarity theory}

\author{Matias G. Delgadino}
\address{
Department of Mathematics, The University of Texas at Austin, 2515 Speedway, Austin, TX 78712-1202}
\email{matias.delgadino@math.utexas.edu}
\author{Daniel Weser}
\address{Department of Mathematics, The University of North Carolina at Chapel Hill, 120 E Cameron Avenue,
Chapel Hill, NC 27599-3250}
\email{weser@unc.edu}
%

\maketitle

\begin{abstract}
  {\rm In this paper we analyze the shape of a droplet inside a smooth container. To characterize their shape in the capillarity regime, we obtain a new form of the Heintze--Karcher inequality for mean convex hypersurfaces with boundary lying on curved substrates.}\\
  \textbf{MSC 2020: 53C24, 35J25, 53C21}\\
  \textbf{Keywords:} Heintze-Karcher's inequality, capillary hypersurface, CMC hyper-
surface, Alexandrov's theorem.
\end{abstract}


\section{Introduction} \subsection{The Heintze-Karcher inequality and Alexandrov's theorem} Given a bounded open connected set $\Om\subseteq\R^{n+1}$ with smooth boundary $\pa\Om$ and positive scalar mean curvature $H_\Om$, the {\it Heintze-Karcher inequality} states that
\begin{equation}
  \label{hk inq}
  (n+1)|\Om|\le\int_{\partial \Omega}\frac{n}{H_\Om}\,,
\end{equation}
with equality if and only if $\Om$ is a Euclidean ball. Here $|\Om|$ denotes the volume of $\Om$, while $H_\Om$ is computed with respect to the outer unit normal $\nu_\Om$ to $\Om$. 

The Heintze-Karcher inequality is closely related to Alexandrov's theorem, which states that if $H_\Om$ is constant (and $\Om$ is bounded and connected), then $\Om$ is a Euclidean ball. Indeed, if $H_\Om$ is constant, then by applying the divergence theorem twice (once in $\Om$ and once tangentially to $\pa\Om$) to the identity vector field one finds
\begin{eqnarray}\nonumber
(n+1)|\Om|=\int_\Om\Div(x)&=&\int_{\pa\Om}(x\cdot\nu_\Om)=\frac1{H_{\pa\Om}}\int_{\pa\Om}(x\cdot\nu_\Om)H_{\pa\Om}
\\\label{double div thm}
&=&\frac1{H_{\pa\Om}}\int_{\pa\Om}\Div^{\pa\Om}(x)=\frac{n\,\H^n(\pa\Om)}{H_{\pa\Om}}
=\int_{\pa\Om}\frac{n}{H_{\pa\Om}}\,,
\end{eqnarray}
where $\H^n$ denotes the $n$-dimensional Hausdorff measure in $\R^{n+1}$ and $\Div^{\pa\Om}$ the tangential divergence with respect to $\pa\Om$. Thus the relation between \eqref{hk inq} and Alexandrov's theorem is that one can prove the latter by characterzing the equality cases in \eqref{hk inq}.

This interesting connection has motivated many authors to investigate the Heintze-Karcher inequality (and variants of it) for proving rigidity theorems. Two proofs of the Heintze-Karcher inequality that are particularly relevant for the present study are the one by Montiel and Ros \cite{montielros} (see Lemma \ref{lemma montiel ros argument} and Remark \ref{remark montielros} below; this proof is also related to the argument recently used by Brendle \cite{brendle} to extend Alexandrov theorem to a large class of warped product manifolds) and the one by Ros \cite{ros87ibero}. Ros' argument is based on the study of the torsion potential $u$ of $\Om$, namely, the solution to the problem
\begin{equation}
  \left\{\begin{split}
    -\Delta u=1&\qquad\mbox{in $\Om$}\,,
    \\
    u=0&\qquad\mbox{on $\pa\Om$}\,.
  \end{split}\right .
\end{equation}
As a by-product of this proof, one sees that when equality holds in \eqref{hk inq}, then $-\nabla^2u=(n+1)^{-1}\,\Id$ in $\Om$ and there exists $\l>0$ such that $|\nabla u|=-\nabla u\cdot\nu_\Om=\l$ on $\pa\Om$. In particular, the connected components of $\Om$ are balls of the same radius, which can be computed in terms of $\l$ by exploiting the Dirichlet condition on $u$.

The analysis of this inequality found an application in a basic question of capillarity theory, namely, the geometric descriptions of critical points of the Gauss energy of a liquid droplet in the capillarity regime. Indeed, using the Heintze-Karcher inequality, the pioneering work of \cite{ciraolomaggi} showed that if a smooth set has mean curvature uniformly close to a constant, then the set is almost a finite union of tangent balls of equal radius. In \cite{delgadino2019alexandrov}, Alexandrov's theorem was revisited for non smooth sets to account for bubbling phenomenon. More specifically, sets of finite perimeter with distributional mean curvature equal to a constant are shown to be the union of disjoint and tangent balls of equal radius. This result was later quantified for smooth sets in \cite{julin2020quantitative} with the closeness of the mean curvature to a constant measured in $L^n(\pa\Om)$, which allowed the authors to show exponential convergence of the volume preserving mean curvature flow to a union of balls in $\R^3$. For the anisotropic generalization of some of these results, see \cite{delgadino2018bubbling,de2020uniqueness}.

\subsection{The Heintze-Karcher inequality on an ideal substrate}
Motivated by the study of droplets on a substrate, we look for a natural extension of the Heintze-Karcher inequality from the case of smooth sets in $\R^{n+1}$ to that of subsets $\Om$ of the half-space $H=\{x_{n+1}>0\}$. As depicted in Figure \ref{fig generalnotation},
\begin{figure}
    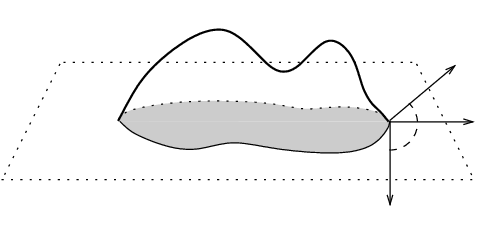\caption{\small{The setting for the Heintze-Karcher inequality on an ideal substrate.}}\label{fig generalnotation}
\end{figure}  
we consider sets $\Om\subseteq \{x_{n+1}>0\}$ such that $M=\ov{\pa\Om\cap\{x_{n+1}>0\}}$ is a smooth hypersurface with boundary, whose set of boundary points $\bd(M)$ satisfies $\bd(M)=M\cap\{x_{n+1}=0\}$. We endow $M$ with the orientation $\nu_M$ defined by $\nu_\Om$, and, after defining for each $x\in\bd(M)$ the angle $\theta(x)$ through
\begin{equation}
  \label{nondegeneracy}
\nu_M(x)\cdot \nu_H=\cos\theta(x) \,,
\end{equation}
we assume the non-degeneracy condition (related to \eqref{youngs law}) that  $\theta(x)\in(0,\pi)$ for every $x\in\bd(M)$. In particular, the wetted region $\Si=\pa\Om\cap\{x_{n+1}=0\}$ is a smooth set with $\bd(\Si)=\bd(M)$, and one can decompose
\[
\nu_M(x)=\cos\theta(x)\nu_H+\sin\theta(x)\,\nu_\Si(x)\qquad\forall x\in\bd(M)\,,
\]
where $\nu_\Si$ denotes the outer unit normal to $\Si$ in $\{x_{n+1}=0\}$.

A basic difficulty here is understanding what form the new Heintze-Karcher inequality should take; indeed, the right-hand side of \eqref{hk inq} makes no sense, as $H_\Om$ becomes a measure on $\bd(M)$. The answer is found by looking at the Montiel-Ros proof \cite{montielros}. Exploited in this setting, their argument rather naturally suggests the discussion of the problem under the additional assumption that
\[
\mbox{either $\theta(x)\in(0,\pi/2]$ or $\theta(x)\in[\pi/2,\pi)$ for every $x\in\bd(M)$.}
\]
Although this may look a bit artificial, this kind of restriction is actually natural from the physical point of view, as it amounts to consider the case of {\it hydrophobic} ($0<\theta\le\pi/2$) or {\it hydrophilic} ($\pi/2\le\theta<\pi$) substrates. It is perhaps not surprising, then, that one finds {\it two different generalizations} of the Heintze-Karcher inequality \eqref{hk inq}, which we collectively call the {\it Heintze-Karcher inequalities on an ideal substrate}:

\begin{figure}
  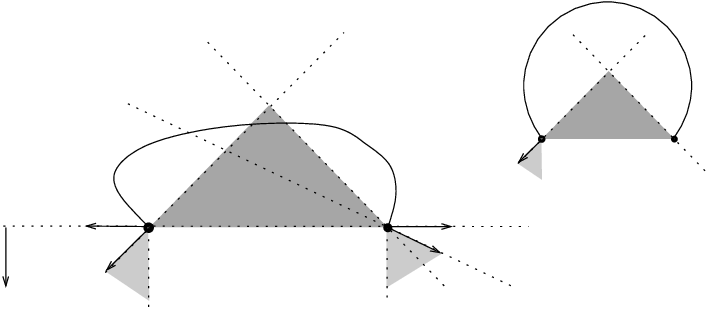\caption{\small{In the hydrophobic case, the ``pyramid'' $\Lambda$ is contained in $H$. Notice that $\Lambda$ may not be contained in $\Om$. When $\Om$ is a ball which intersects $\pa H$ below its equator (picture on the top right corner), then $\Lambda$ is the cone with vertex at the center of the ball, spanned over the wetted region.}}\label{fig lambdahydrophobic}
\end{figure} 
\begin{figure}
  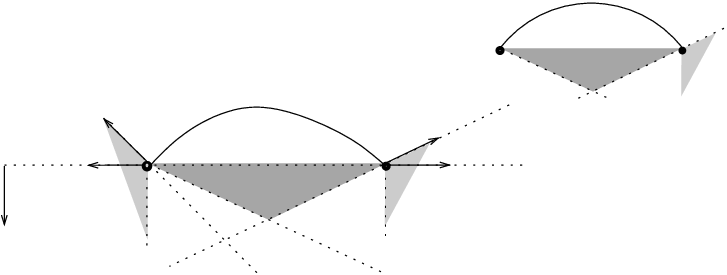\caption{\small{In the hydrophilic case, $\Lambda$ is contained in $\{x_{n+1}<0\}$, and thus it is automatically disjoint from $\Om$. In this regime the optimal case is when $\Om$ is a ball intersecting $\pa H$ above its equator. As in the hydrophobic case, $\Lambda$ can then be described as the cone with vertex at the center of the ball, spanned over the wetted region.}}\label{fig lambdahydrophilic}
\end{figure}

\begin{theorem}[Heintze-Karcher inequality on an ideal substrate]\label{thm hk sub}
  If $\Om\subseteq H=\{x_{n+1}>0\}$ is a bounded connected open set such that $M=\ov{H\cap\pa\Om}$ is a hypersurface with boundary satisfying the hypotheses \textnormal{{\bf(h1)}--{\bf(h4)}} from Section~\ref{sec:hyp}, then
  \begin{equation}\label{hk inq sub proof}
  \begin{split}
    &(n+1)\big(|\Om|-|\Lambda|\big)\le\int_{M}\frac{n}{H_M}\qquad\mbox{in the hydrophobic case}\,,
    \\
    &(n+1)\big(|\Om|+|\Lambda|\big)\le\int_{M}\frac{n}{H_M}\qquad\mbox{in the hydrophilic case}\,,
  \end{split}
  \end{equation}
  where in the hydrophobic case
  \begin{equation}
    \label{lambda hydrophobic}
      \Lambda=\Big\{(z,t)\in \Sigma\times [0,\infty): 0\le t < d(z,\pa\Sigma)\tan\theta( P_{\partial\Sigma}(z))\Big\}\,,
  \end{equation}
  and in the hydrophilic case  
    \begin{equation}
      \label{lambda hydrophilic}
      \Lambda=\Big\{(z,t)\in \Sigma\times (-\infty,0): d(z,\pa\Sigma)\tan\theta (P_{\partial\Sigma}(z))\le t \le 0 \Big\}\,,
  \end{equation}
  where $P_{\partial\Sigma}:\Sigma\to \pa\Sigma$ is the projection onto the boundary.
  
  In both cases, equality holds in \eqref{hk inq sub proof} if and only if $\Om=B_r(t\,e_{n+1})\cap H$ for some $r>0$ and $|t|<r$.
\end{theorem}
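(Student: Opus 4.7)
The plan is to adapt the Montiel--Ros nearest-point argument (cf.\ Lemma~\ref{lemma montiel ros argument}) to the free boundary setting: run the inward normal flow $\Phi(x,t)=x-t\,\nu_M(x)$ from $M$, control its Jacobian by AM--GM, and identify the region of $\Om$ missed by the flow (in the hydrophobic case), or the extra region swept out below the substrate (in the hydrophilic case), with $\La$.

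Set $Z=\{(x,t)\in M\times[0,\infty):\ 0\le t\le 1/\k_{\max}(x)\}$, where $\k_{\max}(x)$ denotes the largest principal curvature of $M$ at $x$. The standard Jacobian computation for the normal map together with the AM--GM inequality gives $|J\Phi(x,t)|\le(1-tH_M(x)/n)^n$ on $Z$, and integrating in $t$ up to $n/H_M$ (with the integrand understood to vanish past $1/\k_{\max}$) yields
\[
(n+1)\,|\Phi(Z)|\ \le\ \int_M\frac{n}{H_M}\,.
\]
The theorem then reduces to the geometric covering claim that, up to a null set, $\Phi(Z)\supseteq\Om\setminus\La$ in the hydrophobic case and $\Phi(Z)\supseteq\Om\cup\La$ in the hydrophilic case: indeed $|\Om|-|\La|\le|\Om\setminus\La|$ always, and $|\Om\cup\La|=|\Om|+|\La|$ in the hydrophilic case because $\La\subseteq\{x_{n+1}<0\}$ is disjoint from $\Om$.

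The covering is the heart of the argument. For each $y$ in the relevant region, take the nearest-point projection $x=P_M(y)$. If $x\in\INT(M)$ then $y-x$ is a nonnegative multiple of $-\nu_M(x)$, and $|y-x|\le 1/\k_{\max}(x)$ by the focal-distance bound, so $y\in\Phi(Z)$. If instead $x\in\bd(M)$, then $y-x$ is orthogonal to $T_x\bd(M)$ and hence lies in $\mathrm{span}(e_{n+1},\nu_\Si(x))$. Writing $y-x=-t\,\nu_\Si(x)+s\,e_{n+1}$ and using the decomposition $\nu_M=\cos\theta\,\nu_H+\sin\theta\,\nu_\Si$, the requirement that $x$ really is the nearest point of $M$ together with the sign of $\cos\theta$ pins down the range of admissible $(s,t)$: in the hydrophobic case $t\ge 0$ and $0\le s\le t\tan\theta(x)$, which, after projecting $y$ vertically to $z\in\Si$ so that $d(z,\bd\Si)=t$ and reading $\theta=\theta(P_{\bd\Si}(z))$, identifies $y$ as an element of $\La$; in the hydrophilic case the same analysis with $\cos\theta\le 0$ sends the extremal rays into $\{x_{n+1}<0\}$ and sweeps precisely $\La$. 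Moreover in the hydrophilic regime the obtuse contact angle forces $P_M(y)\in\INT(M)$ for every $y\in\Om$, so $\Om\subseteq\Phi(Z)$ is automatic.

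I expect the main obstacle to be the rigorous verification of this covering claim when $\Si$ is not convex: the definition of $\La$ uses the projection $P_{\bd\Si}$ onto the boundary of $\Si$, and care is required to ensure that the nearest-point map on $M$ and the projection $P_{\bd\Si}$ on $\Si$ track each other correctly. The hypotheses (h1)--(h4) from Section~\ref{sec:hyp} are presumably tailored to secure the regularity needed here. Granted the covering, the equality case is essentially classical: equality in \eqref{hk inq sub proof} saturates AM--GM on a full-measure subset of $Z$, forcing $M$ to be totally umbilic; being connected, it is a piece of a single sphere $\pa B_r(c)$. The constraint $\bd(M)\subseteq\pa H$, together with the non-degeneracy $\theta\in(0,\pi)$ and the exactness of the covering, then forces $c=t\,e_{n+1}$ with $|t|<r$ and $\Om=B_r(c)\cap H$, exactly the asserted extremal configuration.
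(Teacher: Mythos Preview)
Your proposal is correct and follows essentially the same Montiel--Ros strategy as the paper: bound $(n+1)|\Phi(Z)|$ by $\int_M n/H_M$ via the AM--GM Jacobian estimate, then establish the covering $\Phi(Z)\supseteq\Om\setminus\overline\La$ (hydrophobic) or $\Phi(Z)\supseteq\Om\cup\La$ (hydrophilic) through a nearest-point analysis that splits on whether $P_M(y)$ lands in $\INT(M)$ or on $\bd(M)$. The paper's Step one carries out exactly the boundary-case computation you sketch---writing $(x_0-y)/|x_0-y|=\cos\a\,\nu_H+\sin\a\,\nu_\Si(x_0)$ and deriving $\sin(\theta(x_0)-\a)\le0$ from the first-order condition at the minimizer---and Steps two and three then read off the hydrophobic/hydrophilic inclusions from this angle constraint, which addresses the ``main obstacle'' you flag about tracking $P_M$ against $P_{\pa\Si}$.
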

The sets $\Lambda$ are depicted in Figures \ref{fig lambdahydrophobic} and \ref{fig lambdahydrophilic}, which illustrate the hydrophobic and hydrophilic cases, respectively. The Heintze-Karcher inequalities on a substrate are both sharp, and only a ball meeting $\pa H$ orthogonally ($\theta\equiv\pi/2$) is an equality case for {\it both} inequalities.

Comparing Theorem \ref{thm hk sub} to other recent results in this direction, we first mention the work of Jia, Xia and Zhang \cite{jia2023heintze}, in which the authors prove an alternative Heintze-Karcher inequality for sets over an ideal substrate or contained inside a half-ball and for fixed angles. A strengthened quantitative version of their result for slightly curved substrates and variable angles can be found in Proposition~\ref{lem:deficitconvergence2}, which is used for our compactness result. In follow up work, the above authors together with G. Wang, lifted this restriction to variable angles, see \cite[Theorem 1.1]{jia2022heintze}. In comparison with Theorem~\ref{thm hk sub}, \cite[Theorem 1.1]{jia2022heintze} considers the worst case scenario in terms of the angle of the ``pyramids'' we construct, see \ref{fig lambdahydrophilic} and \ref{fig lambdahydrophobic}.

\subsection{Droplets in a container} We now describe the main focus of this paper, the sessile or pendant droplet in capillarity theory. For more details, we refer any interested reader to \cite{Finn} for a classical overview of this problem. We consider the Gauss energy of a droplet of fixed volume $m$ sitting in a container: if $K\subseteq\R^{n+1}$ an open set with $C^2$-boundary denotes the container and the open set $\Om\subseteq K$ denotes the region occupied by the droplet, then the Gauss energy of $\Om$ is given by
\begin{equation}
  \label{gauss energy H}
\F(\Om)=P(\Omega;K)+\int_{\pa\Om\cap\pa K}\s\,d\H^n+\int_{\Om}\,g(x)\,dx\qquad\mbox{with $|\Omega|=m$ fixed.}
\end{equation}
where $m>0$ is a given volume parameter, the function $g:K\to \R$ is the {\it potential energy density}, and $\s:\pa K\to[-1,1]$ is the {\it relative adhesion coefficient}, which measures the relative mismatch between the surface tension of the liquid/air interface $K\cap\pa\Om$ and the liquid/solid interface $\pa\Om\cap\pa K$. Typically, one sets $g(x)=g_0\,\rho\,x_{n+1}$, where $g_0$ is Earth's gravity and $\rho$ the density of on the liquid. Capillarity phenomena are characterized by the dominance of the surface tension energy, that is of $\H^n(K\cap\pa\Om)+\int_{\pa \Om\cap\pa K}\s$ over the potential energy $\int_\Om g$. This is definitely the case when the volume parameter $m$ is small enough in terms of $g$, see \eqref{anisotropic} for the expression of the energy after scaling by the volume. Although for mathematical simplicity the relative adhesion coefficient $\s$ is sometimes assumed to be constant, several relevant physical phenomena such as hysteresis and contact line pinning can be explained by local inhomogeneities of $\s$. Beyond the classical reference \cite{huh1977effects}, we refer the reader to \cite{caffarelli2007capillary1,caffarelli2007capillary} for a modern homogenization approach to rapidly oscillating $\s$.

In this work, we address the description of the critical points of $\F$ when $m$ is small. This problem leads to a question related to the one considered in \cite{ciraolomaggi}. To explain this point, let us say that a bounded set $\Om$ is a critical point of \eqref{gauss energy H}. If $M=\ov{K\cap\pa\Om}$ is a smooth hypersurface with boundary, then the Euler-Lagrange equations are the following:
\begin{equation}\label{el interior}
  H_\Om(x)+g(x)=\l\,\,\qquad\mbox{for every $x\in \INT(M)$}\,,
\end{equation}
\begin{equation}
\label{youngs law}
  \nu_M(x)\cdot\nu_K=\s(x)\qquad\mbox{for every $x\in\bd(M)$}\,.
\end{equation}
where $\bd(M)=M\cap\pa K$ and $\INT(M)=\pa\Omega\cap K$. Condition \eqref{youngs law} is known as {\it Young's law}, and it has been formulated here by taking $\nu_M$ to be the orientation of $M$ obtained by continuously extending $\nu_\Om$ from $\INT(M)$ to $\bd(M)$. Condition \eqref{el interior} is the Euler-Lagrange equation of the perimeter, and $\lambda$ is a Lagrange multiplier that arises from the volume constraint. 

In the ideal case when $g=0$ and $\s\equiv\s_0$, then $M$ is a constant mean curvature hypersurface contained in $K$ whose boundary meets $\pa K$ at a constant angle. {\it Wente's theorem} \cite{wente1980}, which was originally obtained by exploiting the moving planes method, shows that when $\pa K$ is flat then $M$ is a spherical cap. We also mention \cite{HHW20} for the state of the art on the moving planes method with minimal regularity requirements. When $g$ is non-trivial and $|\Omega|$ is small, we are back to a problem similar to the one considered in \cite{ciraolomaggi}, with the added difficulties of having to deal with a surface with boundary and almost constant contact angle.  In \cite{maggi2016shape}, this problem was addressed for the case of global minimizers, showing that as $m\to0^+$ global minimizers converge, up to a re-scaling, to a ball intersected with a half-space. In this work, we extend this result to {\it local} minimizers of \eqref{gauss energy H}, which we define as follows:

\begin{definition}
A set of locally finite perimeter $\Omega\subseteq K$ of volume $|\Omega|=m$ is said to be a \textnormal{volume constrained local minimizer (VCLM)} with diameter $\e_0>0$ of the Gauss capillarity energy if 
 $$
\mathcal{F}_{K,\sigma,g}(\Omega)\le\mathcal{F}_{K,\sigma,g}(\tilde{\Omega})
$$
for every $\tilde{\Omega}\subseteq K$ satisfying $|\tilde{\Omega}|=m$ and $diam(\tilde{\Omega}\triangle \Omega)<\e_0 m^{\frac{1}{n+1}}$.
\end{definition}

\smallskip

Analogous to the global minimizer case, our result is that as $m\to 0^+$, up to re-scaling, local minimizers converge to a single ball intersected with the half-space $\{x_{n+1}>0\}$.

\smallskip

\begin{theorem}\label{thm:locmin}
Consider a fixed compact container $K\subseteq\R^{n+1}$ with $C^2$-boundary and a Lipschitz regular hydrophilic adhesion coefficient $\sigma:\partial K \to (-1,0)$. Given constants $C>1$ and $\e_0>0$, for each $m>0$, we consider $\mathcal{G}_{m,\e_0,C}$ a subset of smooth connected sets, which are volume constrained local minimizers of $\F_{K,\sigma,g}$ with diameter $\e_0$ and volume $m$. Moreover, we assume that any $\Omega_m\in\mathcal{G}_{m,\e_0,C}$ satisfies the uniform non-degeneracy conditions for perimeter, diameter and wetter region:
 \begin{itemize}
    \item $P(\Omega_m;K)\le C m^{\frac{n}{n+1}}$,
    \item $\diam(\Omega_m)\le C m^{\frac{1}{n+1}}$,
    \item $\H^n(\Sigma_m)\ge C^{-1}m^{\frac{n}{n+1}}$\,.
 \end{itemize}
Then, we have that as $m\to0^+$ any element sequence converges to a spherical cap:
\begin{equation}\label{eq:a1}
    \lim_{m\to0^+}\sup_{\Omega_m\in \mathcal{G}_{m,\e_0,C}}\inf_{x_0,Q,\theta_*} m^{-1}|\Omega_m\triangle (m^{\frac{1}{n+1}}(x_0+Q[B_{\theta_*}]))|=0,
\end{equation}
where $B_{\theta_*}$ is the ball which intersects $\{x_{n+1}=0\}$ with angle $\theta_*\in(\pi/2,\pi)$ and has volume $|B_{\theta_*}\cap \{x_{n+1}>0\}|=1$, and $Q[B_{\theta_*}]$ is a rigid motion of $B_{\theta_*}$. 
\end{theorem}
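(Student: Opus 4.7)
The plan is to argue by a rescaling--compactness--rigidity scheme, reducing to the ideal flat-substrate problem where Theorem~\ref{thm hk sub} provides the needed characterization. Suppose \eqref{eq:a1} fails, so that there exist $\eta>0$, $m_k\to 0^+$, and local minimizers $\Om_{m_k}\in\mathcal{G}_{m_k,\e_0,C}$ whose rescaled $L^1$-distance from every ball-cap stays at least $\eta$. For each $k$ I would pick a point $x_{m_k}\in\Sigma_{m_k}$ (which exists thanks to $\H^n(\Sigma_{m_k})\ge C^{-1}m_k^{n/(n+1)}$), translate and rotate so that $x_{m_k}=0$ with $\nu_K(0)=-e_{n+1}$, and rescale by $m_k^{-1/(n+1)}$ to obtain $\tilde\Om_{m_k}:=m_k^{-1/(n+1)}\Om_{m_k}$. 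These sets have volume $1$, uniformly bounded diameter and perimeter, their rescaled container boundaries $\pa\tilde K_{m_k}$ converge locally in $C^2$ to $\pa H=\{x_{n+1}=0\}$, the rescaled adhesion coefficient $\tilde\sigma_{m_k}:=\sigma(m_k^{1/(n+1)}\,\cdot\,)$ tends uniformly on compacts to the constant $\sigma_*:=\sigma(0)\in(-1,0)$ by Lipschitz continuity, and the potential term becomes negligible.

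By compactness of sets of finite perimeter, I would extract an $L^1$-subsequential limit $\Om_\infty\subseteq H$ with $|\Om_\infty|=1$ and with a non-trivial wetted region on $\pa H$ (inherited from the uniform lower bound on $\H^n(\Sigma_{m_k})$). The local-minimality radius $\e_0 m_k^{1/(n+1)}$ becomes the fixed $\e_0$ after rescaling, so each $\tilde\Om_{m_k}$ is an $\e_0$-local minimizer of a vanishing perturbation of the ideal capillarity energy on the half-space with constant adhesion $\sigma_*$. Applying $\e$-regularity for almost-minimizers of the capillarity functional uniformly up to the contact line, I would upgrade the $L^1$ convergence to $C^{1,\alpha}$ convergence of $\pa\tilde\Om_{m_k}$ to $M_\infty:=\overline{H\cap\pa\Om_\infty}$, with smoothness in the interior by elliptic bootstrap. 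Passing to the limit in the Euler--Lagrange equations \eqref{el interior}--\eqref{youngs law}, I would conclude that $\Om_\infty$ is a smooth capillary critical point in $H$ with constant mean curvature and constant contact angle $\theta_*:=\arccos\sigma_*\in(\pi/2,\pi)$, satisfying the hypotheses of Theorem~\ref{thm hk sub} in the hydrophilic regime.

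At this point Theorem~\ref{thm hk sub} closes the argument: the computation \eqref{double div thm}, suitably modified to incorporate the boundary term $|\Lambda|$, turns the constancy of mean curvature and contact angle into equality in the hydrophilic Heintze--Karcher inequality; the equality case then forces $\Om_\infty=B_r(t\,e_{n+1})\cap H$, with $r$ and $t$ uniquely determined by $|\Om_\infty|=1$ and $\theta\equiv\theta_*$. This identifies $\Om_\infty$ with $B_{\theta_*}$ up to horizontal translation, contradicting the standing assumption. The main obstacle is Step 2: one must establish $\e$-regularity and $C^{1,\alpha}$ convergence uniformly up to the free contact line (handled by the Lipschitz behavior of $\sigma$ and the $C^2$ regularity of $\pa K$), rule out bubbling via the diameter-$\e_0$ local minimality, and guarantee that the limit surface satisfies the strict hypothesis $\theta\in(0,\pi)$ in \textbf{(h1)}--\textbf{(h4)}; once these regularity facts are in place, Theorem~\ref{thm hk sub} does the rigidity work for free.
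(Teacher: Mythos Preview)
Your overall rescaling--contradiction strategy matches the paper's, but the rigidity mechanism you invoke is different from the one the paper actually uses, and the difference is where the gap lies.

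The paper does \emph{not} apply Theorem~\ref{thm hk sub} to the limit set. Instead, after rescaling, it applies the compactness theorem (Theorem~\ref{mainthm}), whose proof occupies the bulk of the paper and rests on the torsion-potential machinery of Section~\ref{sec:tp} (Propositions~\ref{prop} and~\ref{lem:deficitconvergence2}). That theorem takes a sequence with almost-constant mean curvature and almost-constant contact angle and shows directly---at the varifold level, without any $C^{1,\alpha}$ control---that the limit is a finite union of tangent balls meeting $\pa H$ at angle $\theta_0$. Only then does the paper use local minimality (Lemma~\ref{lem:singleball}) to rule out more than one ball. The perimeter-convergence hypothesis \eqref{hyp: conv in per2} needed for Theorem~\ref{mainthm} is verified via the almost-minimizer compactness of De~Philippis--Maggi \cite[Theorem~2.9]{dephilippismaggiCAP-ARMA}, after straightening $\pa K$.

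Your route instead tries to pass $C^{1,\alpha}$ convergence to the limit, obtain a smooth connected $\Om_\infty$ with constant $H$ and constant $\theta$, and then invoke Theorem~\ref{thm hk sub}. The genuine gap is the sentence ``Applying $\e$-regularity\ldots I would upgrade the $L^1$ convergence to $C^{1,\alpha}$ convergence.'' Allard/De~Philippis--Maggi $\e$-regularity is a \emph{local} statement, valid only where the excess is already small; it does not by itself exclude points where two sheets of $M_l$ collapse together (bubbling). At such points the limit varifold has density $\ge 2$, the limit set is a union of tangent caps, it is not smooth, and Theorem~\ref{thm hk sub} (which requires $M$ to be a smooth hypersurface with boundary satisfying {\bf(h1)}--{\bf(h4)}) simply does not apply. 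You flag this yourself as ``the main obstacle'' and propose to ``rule out bubbling via the diameter-$\e_0$ local minimality,'' but no mechanism is given; this is exactly the step the paper spends Theorem~\ref{mainthm} and Lemma~\ref{lem:singleball} on. In short, your argument outsources to a single sentence what is in fact the principal technical content of the paper, and the tool you reach for at the end (Theorem~\ref{thm hk sub}) is only usable once that content has already been established by other means.
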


\smallskip

\begin{remark}
  We note that for $n=2$ local minimizers are regular, see \cite{taylor1977boundary,dephilippismaggiCAP-ARMA}.
\end{remark}

\begin{remark}
    The convergence in \eqref{eq:a1} is actually stronger, see \eqref{eq:a2} in Theorem~\ref{mainthm}.
\end{remark}

To obtain this characterization, we prove a compactness result for Wente's theorem. The non-degeneracy conditions in the statement of theorem are the minimal ones necessary to be able to rescale the problem appropriately. The proof of the compactness theorem occupies the bulk of this paper.

\medskip

\subsection{Compactness}

\smallskip


To prove Theorem~\ref{thm:locmin}, we will re-scale the problem so that the re-scaled sets have unit volume. Due to the $C^2$-regularity of the container, the re-scaled boundary will become flat in the limit. Consequently, our compactness theorem takes the following (simplified) form (see Section \ref{sec:compactness} for the precise statement):

\begin{theorem}\label{mainthm}
Let $\{K_l\}_{l\in\N}$ be a sequence of connected open sets with connected $C^2$-boundaries which converge to the half space $\{x_{n+1}>0\}$ in the following sense:
\begin{equation}\label{containers conv to half space}
\|A_l\|_{C^0_{loc}(\pa K_l)}\to 0,\qquad \|e_{n+1}\cdot\nu_{K_l}\|_{C^0_{loc}(\pa K_l)}\to 0\qquad\mbox{and}\qquad \|x\cdot\nu_{K_l}\|_{C^0_{loc}(\pa K_l)}\to 0\,,
\end{equation}
where $A_l$ is the second fundamental form of $\pa K_l$.

Let $\{\Omega_l\}_{l\in\N}$ be a sequence of connected open sets such that their boundaries $\{M_l\}_{l\in\N}$ are smooth (see hypotheses {\bf(h1)}-{\bf(h4)} and notation in Section~\ref{sec:hyp}).
If there exists a positive constant $\lambda_0>0$ such that
    \begin{equation}\label{hyp: conv mc}
        \left\|H_l-\lambda_0\right\|_{C^0(M_l)} \to 0\,,
    \end{equation}
a fixed angle $\theta_0\in(\pi/2,\pi)$ such that
\begin{equation}\label{hyp: conv angle}
    \|\theta_l-\theta_0\|_{C^0(\bd(M_l))}\to 0\,,
  \end{equation}
and a bounded set of finite perimeter $\Omega_\infty\subseteq\{x_{n+1}>0\}$ such that
\begin{equation}\label{hyp: conv in per}
    P(\Omega_l;K_l)\to P(\Omega_\infty;\{x_{n+1}>0\}) \qquad \textnormal{and} \qquad |\Omega_l\Delta\Omega_\infty|\to0,
\end{equation}
then $\Omega_\infty$ is equal to the finite union of disjoint tangent balls of radius $n/\lambda_0$ intersected with $\{x_{n+1}>0\}$ and is such the that any ball in $\Omega_\infty$ which intersects the hyperplane $\{x_{n+1}=0\}$ does so with contact angle $\theta_0$. Moreover, the wetted regions converge in the following sense:
    \begin{equation}\label{eq:a2}
    \lim_{l\to\infty}  |\H^n(\Sigma_l)-\H^n(\Sigma_\infty)|+|\H^{n-1}(\pa\Sigma_l)-\H^{n-1}(\pa\Sigma_\infty )| \;=\; 0 \,.
    \end{equation}
\end{theorem}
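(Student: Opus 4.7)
The plan is to derive the theorem from the Heintze--Karcher inequality on an ideal substrate (Theorem~\ref{thm hk sub}) by showing that the limit set $\Om_\infty$ saturates this inequality on each of its connected components. The key insight is that the uniform convergences \eqref{hyp: conv mc} and \eqref{hyp: conv angle}, combined with the flattening \eqref{containers conv to half space} of the containers, force $M_\infty$ to have exactly constant mean curvature $\l_0$ and exactly constant contact angle $\theta_0$; under these conditions, the classical Alexandrov double-divergence computation produces equality in the Heintze--Karcher inequality, whose rigidity then pins down $\Om_\infty$.

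The first step is to upgrade the convergence $M_l\to M_\infty$. Starting from the uniform mean curvature and contact-angle control together with the container flattening, one expects uniform interior-and-boundary $C^{2,\a}$ estimates for the capillary surfaces $M_l$ in which the substrate regularity enters only through $\|A_l\|_{C^0_{\loc}(\pa K_l)}$, which vanishes in the limit. Such uniform estimates, combined with the volume/perimeter convergence \eqref{hyp: conv in per} and standard compactness, yield $C^{2,\a}_{\loc}$-subconvergence of $M_l$ to a smooth (possibly disconnected) hypersurface $M_\infty$ whose boundary lies on $\{x_{n+1}=0\}$, satisfies hypotheses {\bf(h1)}--{\bf(h4)}, and has $H_{M_\infty}\equiv\l_0$ on $\INT(M_\infty)$ and contact angle $\equiv\theta_0$ on $\bd(M_\infty)$. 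Since each connected component of $\Om_\infty$ has a lower volume bound (from the uniform mean curvature together with the diameter control inherited from $\Om_l$) and $|\Om_\infty|<\infty$, there are only finitely many components $\Om_\infty^{(1)},\ldots,\Om_\infty^{(N)}$.

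Applying Theorem~\ref{thm hk sub} (hydrophilic case, since $\theta_0\in(\pi/2,\pi)$) to each component $\Om_\infty^{(j)}$ gives $(n+1)(|\Om_\infty^{(j)}|+|\La^{(j)}|)\le n\,\H^n(M_\infty^{(j)})/\l_0$. To promote this to an equality, I carry out the classical Alexandrov-type identity on $\Om_\infty^{(j)} \cup \La^{(j)}$: integrating $\Div(x)=n+1$ on this set and applying the surface divergence theorem tangentially on $M_\infty^{(j)}$ (using $H\equiv\l_0$), the boundary term on $\bd(M_\infty^{(j)})$ matches (up to the factor $1/\l_0$) the integral of $x\cdot\nu$ over the ``cone'' piece of $\pa\La^{(j)}$, precisely because $\La^{(j)}$ is built so that its lower boundary meets the substrate $\{x_{n+1}=0\}$ at the angle $\theta_0$. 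This yields
\[
(n+1)\,\bigl(|\Om_\infty^{(j)}|+|\La^{(j)}|\bigr)=\frac{n}{\l_0}\,\H^n(M_\infty^{(j)}),
\]
so the equality characterization in Theorem~\ref{thm hk sub} forces $\Om_\infty^{(j)} = B_r(t_j\,e_{n+1})\cap\{x_{n+1}>0\}$ with $r=n/\l_0$ and $t_j = r\cos\theta_0$; in particular all caps are congruent.

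Tangency of distinct balls is deduced from the connectedness of each $\Om_l$: if two components $\Om_\infty^{(i)},\Om_\infty^{(j)}$ had positive separation, then by $L^1$-convergence the connected $\Om_l$ would have to contain a thin bridge between the corresponding regions, which by standard neck arguments would force the mean curvature of $M_l$ to blow up somewhere on $M_l$, contradicting \eqref{hyp: conv mc}. The convergence \eqref{eq:a2} of the wetted-region areas and perimeters is immediate from the $C^{2,\a}$-convergence of $M_l$ up to $\bd(M_l)$ established in the first step. The main obstacle will be this first step: uniform capillary regularity over a varying substrate $\pa K_l$. One needs boundary regularity estimates that depend on the container only through the locally vanishing curvature $\|A_l\|_{C^0_{\loc}}$, so that standard flat-substrate arguments become available in the limit; once this regularity is in hand, the Alexandrov identity and the rigidity of Theorem~\ref{thm hk sub} do the rest.
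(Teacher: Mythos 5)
Your strategy diverges fundamentally from the paper's, and it has a genuine gap at its very first step. You assume uniform $C^{2,\alpha}$ estimates for $M_l$ up to $\bd(M_l)$, depending on the container only through $\|A_l\|_{C^0_{\loc}(\pa K_l)}$, and then extract a smooth limit $M_\infty$ with constant mean curvature and constant contact angle. But the hypotheses give you only $C^0$ control of the \emph{mean} curvature $H_l$ and of the contact angle; there is no control whatsoever on the full second fundamental form of $M_l$, and curvature estimates of the type you need are exactly what is unavailable in almost-CMC problems (this is why \cite{ciraolomaggi}, \cite{delgadino2019alexandrov} and the present paper all work with integral identities rather than pointwise compactness of the surfaces). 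Worse, such global estimates \emph{cannot} hold here: the theorem's conclusion allows $\Om_\infty$ to be a union of mutually tangent balls, and a sequence of connected $M_l$ converging to such a configuration must degenerate through thin necks where the second fundamental form blows up (while $H_l$ stays near $\lambda_0$ — your later ``mean curvature blows up on a thin bridge'' claim is incorrect for the same reason: catenoidal necks have small mean curvature). The paper sidesteps all of this by working with multiplicity-one varifolds, White's compatibility theorem, and a density estimate that upgrades varifold convergence to local Hausdorff convergence of the boundaries; no pointwise convergence of curvatures is ever used.

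The second gap is in your rigidity step. Even granting a smooth limit with $H\equiv\lambda_0$ and $\theta\equiv\theta_0$, the double-divergence computation does not produce equality in \eqref{hk inq sub proof}. The tangential divergence theorem gives a boundary term $-\cos\theta_0\int_{\pa\Si}x\cdot\nu_\Si=-n\cos\theta_0\,\H^n(\Si)$, whereas $(n+1)|\Lambda|$ is $(n+1)\int_\Si |d(z,\pa\Si)\tan\theta_0|\,dz$; these two quantities agree only for very special wetted regions $\Si$ (essentially round disks), so you cannot conclude that a general constant-$H$, constant-$\theta$ configuration saturates Theorem~\ref{thm hk sub}. Identifying a smooth connected CMC capillary surface over a flat substrate as a spherical cap is Wente's theorem, not a consequence of the Alexandrov identity. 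The paper's actual mechanism is entirely different: it shows, via the modified torsion potential \eqref{torsion potential}, Reilly's identity (Lemma~\ref{lemma Reillys}), and the deficit estimates of Propositions~\ref{prop} and \ref{lem:deficitconvergence2}, that $\int_{\Om_l}|\nabla^2u_l-\tfrac{\Id}{n+1}|^2\to0$; the limit potential is then an explicit quadratic on each component, its zero level set identifies the components as balls intersected with the half-space, White's theorem plus a sliding argument fix the radii, and a first-variation comparison fixes the contact angles and yields \eqref{eq:a2}. None of these steps is recoverable from your outline, so the proposal does not constitute a proof.
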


\smallskip


The proof of Theorem~\ref{mainthm} follows from deriving a Heintze-Karcher-like inequality (see Proposition~\ref{prop})
\begin{align}
      &\frac{|\Omega|-\gamma\H^{n}(\Sigma)}{n+1} \left( \int_{M} \frac{n}{H_M}d\H^n - (n+1)\big(|\Omega|-\gamma\H^{n}(\Sigma)\big) \right)\\
      &\quad+2(n+1)\e \bigg\{\int_M\frac{d\H^n}{H_M}\left(\frac{|\Omega|}{n+1}+4(d_\Omega^2+\gamma^2)(|\Omega|+\gamma \H^n(\Sigma))\right)+\max\frac{1}{H_M}\left(|\Omega|-\gamma\H^n(\Sigma)\right)^2 \bigg\} \ge 0
      \label{prop:hk} 
\end{align}
by utilizing an appropriately modified torsion potential
\begin{equation}\label{torsion potential}
    \begin{cases}
        \hfill -\Delta u=1&\mbox{ in $\Omega$}\\
        \hfill u=0&\mbox{ on $M$}\\
        \hfill \partial_{\nu_K}u=\gamma&\mbox{ on $\Sigma$}\,,
    \end{cases}
\end{equation}
where
\begin{equation}\label{gamma}
\gamma=-\frac{n}{n+1}\frac{\H^n(\Sigma)}{\int_{\partial\Sigma}\tan\theta(x)\,d\H^{n-1}(x)} \,.
\end{equation}
Although this is the guiding principle of the proof in \cite{ciraolomaggi}, this is where the similarities with our proof end. The mixed boundary conditions in \eqref{torsion potential} made obtaining uniform Lipschitz estimates impossible at the present time. In fact, the need to restrict to the hydrophilic regime $\theta\in(\pi/2,\pi)$ is due to the lack of $C^1$-regularity in the hydrophobic case, see Remark~\ref{rem:c1reg} and Lemma~\ref{torsionC1}. We sidestep these difficulties by utilizing varifold convergence of the boundaries and density estimates to imply Hausdorff convergence of the boundaries. The characterization of the limit set as a union balls follows from showing that the case of equality in the Heintze-Karcher-like inequality \eqref{prop:hk} is obtained in the limit. In fact, we can show the following quantitative inequality:
\begin{align}\label{almost equality}
    &\left|(n+1)(|\Omega|-\gamma\H^n(\Sigma))-\int_M\frac{n}{H_M}\;d\H^n\right|\\
    &\qquad\le C\left(\e+\|\theta-\theta_0\|_{C^0(\pa\Sigma)}+\frac{\|H_M-\lambda\|_{C_0(M)}}{\lambda}\right)\left(d_\Omega+\frac{1}{\lambda}\right)\H^n(M),
\end{align}
where $\gamma$ is defined in \eqref{gamma}.

\medskip

\subsection{Organization of the paper}
Section~\ref{sec:hyp} sets the general notation for the rest of the paper. Section~\ref{sec:tp} contains all the necessary background for the proof of Theorem~\ref{mainthm}, which can be found in Section~\ref{sec:compactness}. Sections~\ref{min} and \ref{sec:hkideal} prove Theorems~\ref{thm:locmin} and \ref{thm hk sub}, respectively.

\subsection*{Acknowledgements}
The authors would like to thank Guido De Philippis and Francesco Maggi for useful discussions. MGD was partially supported by NSF-DMS-2205937.MGD and DW were partially supported by NSF-DMS RTG 1840314.

\section{Hypotheses and Notation}\label{sec:hyp}
Let $n\geq2$, and denote by $K\subseteq \R^{n+1}$ an open set with $C^2$-boundary. We will consider non-empty bounded connected open sets $\Omega\subseteq K$ with finite perimeter,
and we will denote by $\nu_{\Om}$ the measure-theoretic outer unit normal to $\Om$ (which agrees with the classical one if the $\pa\Om$ is of class $C^1$). We assume that
\[
    M=\ov{\pa\Om\cap K}
\]
is a smooth hypersurface with boundary in $\R^{n+1}$. We denote by $\bd(M)$ and by $\INT(M)$ the boundary and the interior points of $M$, respectively, and make the following assumptions:

\medskip

\noindent {\bf (h1):} $\bd(M)$ is non-empty and contained in $\pa K$, that is
  \[
  \bd(M)=M\cap\pa K\qquad \INT(M)=M\cap K=\pa\Om\cap K\,.
  \]
  In particular, $\nu_{\Om}$ is classically defined on $M\cap K=K\cap\pa\Om$, and an orientation $\nu_M$ of $M$ can be defined by setting
  \[
  \begin{split}
    \nu_{M}(x)=\nu_{\Om}(x)&\qquad\forall x\in \INT(M)\,,
    \\
    \nu_{M}(x)=\lim_{\stackrel{y\in M \cap K}{y\to x}}\nu_{\Om}(y)&\qquad\forall x\in\bd(M)\,.
  \end{split}
  \]
  
\medskip

\noindent {\bf (h2):} if we denote by $H_{M}$ the scalar mean curvature of $M$ with respect to $\nu_{M}$, then
  \[
  H_{M}>0\qquad\mbox{on $M$}\,.
  \]
  
  \medskip

\noindent {\bf (h3):} if we define $\theta:\bd(M)\to[0,\pi]$ by setting
  \[
  \cos\theta(y)=\nu_{M}(y)\cdot\nu_{K}\qquad\forall x\in\bd(M)\,,
  \]
  then
  \[
  \theta(x)\in(0,\pi)\qquad\forall x\in\bd(M)\,.
  \]
  In particular, $\pa\Om\cap\pa K$ is a smooth hypersurface in $\R^{n+1}$, $\Sigma:=\INT(\pa\Om\cap\pa K)$ is an open set with smooth boundary in $\pa H=\R^n\times\{0\}$, and
  \[
  \bd(M)=\pa \Sigma\,,
  \]
  where $\pa\Si$ denotes boundary of $\Si$ in $\pa H$, whose outer unit normal is denoted by $\nu_\Sigma$.

  \medskip
  
  \noindent {\bf (h4):} denoting by $\theta_{min}$ and $\theta_{max}$ the extreme values of $\theta$ on $\bd(M)$, we shall assume that
  \begin{equation}
    \begin{split}
      &\mbox{either $[\theta_{min},\theta_{max}]\subseteq (0,\pi/2)$ (hydrophobic substrate)}\,,
      \\
      &\mbox{or \hspace{.5cm} $[\theta_{min},\theta_{max}]\subseteq (\pi/2,\pi)$ (hydrophilic substrate)}\,.
    \end{split}
  \end{equation}
  
  \medskip

  With the above notation in force, for every $x\in\bd(M)$ one has
  \begin{align}
    \nu_M(x)\;&=\;\cos\theta(x)\nu_K+\sin\theta(x)\nu_\Sigma(x)\,,\label{nuM} \\
    \nu_{\bd(M)}(x)\;&=\;\sin\theta(x)\nu_K-\cos\theta(x)\nu_\Sigma(x)\,,\label{nuMbdM}
  \end{align}
  where $\nu_{\bd(M)}$ denotes the outer normal to $\bd(M)$ with respect to $M$.

\section{Torsion potential with a substrate}\label{sec:tp}
In this section, we will assume hypotheses {\bf(h1)}--{\bf(h4)} from Section~\ref{sec:hyp} and, per the discussion in the Introduction, we will assume further that the substrate is hydrophilic and the container is almost flat. The precise definitions are as follows:

\medskip

 \noindent {\bf Hydrophilic:} Denoting by $\theta_{min}$ and $\theta_{max}$ the extreme values of $\theta$ on $\bd(M)$, we shall assume that there exists $\delta_0>0$ such that
  \begin{equation}
    \begin{split}
      &\mbox{$[\theta_{min},\theta_{max}]\subseteq (\pi/2+\delta_0,\pi)$.}
    \end{split}
  \end{equation}

\medskip

\noindent {\bf Almost Flatness:} There exists $\e\ll1$ such that, up to rotation and translation
\begin{equation}\label{containers conv to half space}
\|A_{\pa K}\|_{C^0(\Sigma)}<\e ,\quad \|1+e_{n+1}\cdot\nu_{K}\|_{C^0(\Sigma)}<\e, \quad\mbox{and}\quad \|x\cdot\nu_{K}\|_{C^0(\Sigma)}<\e |x|.
\end{equation}

\medskip

\begin{remark}\label{rem:c1reg}
    While {\bf Almost Flatness} naturally arises from the proof of Theorem \ref{thm:locmin}, we need to assume we are in the {\bf Hydrophilic} regime due to the fact that the torsion potential fails to be $C^1(\ov{\Omega})$ in the hydrophobic regime, see Lemma~\ref{torsionC1} below. The regularity of the torsion potential is essential to justify Reilly's identity, see Lemma~\ref{lemma Reillys} below.
\end{remark}

\bigskip

We define the \emph{torsion potential of $\Omega$} to be the unique solution $u\in C^2(\Omega\cup\Sigma)\cap C^0(\ov{\Omega})$ to the mixed boundary value problem
\begin{equation}\label{torsion potential}
    \begin{cases}
        \hfill -\Delta u=1&\mbox{ in $\Omega$}\\
        \hfill u=0&\mbox{ on $M$}\\
        \hfill \partial_{\nu_K}u=\gamma&\mbox{ on $\Sigma$},
    \end{cases}
\end{equation}
where
\begin{equation}\label{gamma}
\gamma=-\frac{n}{n+1}\frac{\H^n(\Sigma)}{\int_{\partial\Sigma}\tan\theta(x)\,d\H^{n-1}(x)} \,.
\end{equation}
Existence, uniqueness, and regularity follow from \cite[Theorem 1]{Lieberman}. Next, we show that in the {\bf Hydrophilic} regime one can prove additional regularity of the torsion potential and that in the  {\bf Almost Flatness} regime one can control its $L^\infty$-norm.

\smallskip

\begin{lemma}\label{torsionC1}
Under the hypothesis of Section~\ref{sec:hyp}, assume further that the substrate is {\bf Hydrophilic}. Then, the unique solution $u\in C^2(\Omega\cup\Sigma)\cap C^0(\ov{\Omega})$ to \eqref{torsion potential} additionally satisfies
    \begin{equation}\label{u reg and pos}
        u\in C^1(\overbar{\Omega}) \quad \textnormal{and} \quad u > 0 \;\; \textnormal{in } \Omega \,.
    \end{equation}
Moreover, if $\Sigma$ satisfies {\bf Almost Flatness} with $\e$ small enough then 
    \begin{equation}\label{C^0 est u}
        \|u\|_{C^0(\Omega)} \;\leq\; 4(d^2_\Omega+\gamma^2)\,.
    \end{equation}
\end{lemma}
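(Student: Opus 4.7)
My plan is to establish three claims in turn: (a) the regularity $u\in C^1(\ov\Om)$ up to the edge $\bd(M)=\pa\Si$, (b) the strict positivity $u>0$ in $\Om$, and (c) the $L^\infty$-bound. The existence, uniqueness, and baseline regularity $u\in C^2(\Om\cup\Si)\cap C^0(\ov\Om)$ are already furnished by Lieberman's theorem, so only the behavior at the edge $\pa\Si$ (where the Dirichlet and Neumann conditions meet) and a global upper bound on $u$ remain.

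\emph{Step (a)} is the main obstacle, and it is exactly where the \textbf{Hydrophilic} hypothesis is essential. Away from $\pa\Si$, interior elliptic theory gives $u\in C^2(\Om\cup\Si)$ and Schauder theory gives $u\in C^{1,\b}$ up to $\INT(M)$. At each edge point $x_0\in\pa\Si$, using \eqref{nuM}--\eqref{nuMbdM} the inward tangent to $M$ is $-\sin\theta\,\nu_K+\cos\theta\,\nu_\Si$, while the inward tangent to $\Si$ is $-\nu_\Si$, so the interior opening of $\Om$ at $x_0$ equals $\alpha(x_0)=\pi-\theta(x_0)\in(0,\pi/2-\de_0)$. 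After $C^2$-flattening $\pa K$ and $M$ locally, the model problem becomes the Laplace equation on a planar sector of opening $\alpha$ with Dirichlet on $\{\vphi=0\}$ and Neumann on $\{\vphi=\alpha\}$; separation of variables produces separable harmonic modes $r^{(2k-1)\pi/(2\alpha)}\sin((2k-1)\pi\vphi/(2\alpha))$, whose leading exponent $\pi/(2\alpha)$ is strictly above $1$ precisely because $\alpha<\pi/2$. This is the hypothesis Lieberman's edge estimate needs to promote $u$ to $C^{1,\b}$ up to $\pa\Si$; in the hydrophobic regime $\alpha>\pi/2$ and the exponent drops below $1$, which is the source of the $C^1$-failure flagged in Remark~\ref{rem:c1reg}.

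\emph{Step (b).} First note that hydrophilicity gives $\gamma>0$: since $\theta\in(\pi/2+\de_0,\pi)$ on $\pa\Si$, $\tan\theta<0$ there, and the prefactor $-n/(n+1)$ in \eqref{gamma} flips the sign. Because $-\Delta u=1>0$, $u$ is superharmonic, so $\min_{\ov\Om}u$ is attained on $\pa\Om$. A minimum at an interior point of $\Si$ is ruled out by Hopf's lemma, which would force $\pa_{\nu_K}u<0$ there, contradicting $\pa_{\nu_K}u=\gamma>0$; the remaining candidates (interior of $M$ and the edge $\pa\Si$) all lie in $M$, where $u=0$. Hence $u\ge0$ on $\ov\Om$, and the strong maximum principle upgrades this to $u>0$ in $\Om$.

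\emph{Step (c).} I would compare $u$ with an explicit barrier. Up to a rotation-translation, the \textbf{Almost Flatness} hypothesis gives $\nu_K$ is $\e$-close to $-e_{n+1}$ on $\Si$ and $\Om\subseteq\{x_{n+1}>-c\e\,d_\Om\}$. Consider
\[
v(x)=\frac{(d_\Om+\gamma)^2-(x_{n+1}+\gamma)^2}{2}+C\e(d_\Om+\gamma)^2,
\]
where the last term absorbs the $O(\e)$ error coming from $\nu_K\ne -e_{n+1}$. A direct computation gives $-\Delta v=1$; the range $x_{n+1}+\gamma\in[\gamma-O(\e)d_\Om,\,d_\Om+\gamma]$ on $\ov\Om$ guarantees $v\ge 0$ on $M$; and the almost-flatness bounds in \eqref{containers conv to half space}, combined with the extra $C\e(d_\Om+\gamma)^2$, give $\pa_{\nu_K}v\ge\gamma$ on $\Si$. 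The same maximum-principle argument used in (b), now applied to $v-u$, yields $u\le v\le\tfrac12 d_\Om^2+d_\Om\gamma+O(\e)(d_\Om+\gamma)^2\le 4(d_\Om^2+\gamma^2)$ once $\e$ is small.
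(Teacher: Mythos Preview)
Your Steps~(a) and~(b) are essentially the paper's argument. The paper obtains $C^1(\ov\Om)$ by citing \cite[Thm.~1]{azzam1981regularitatseigenschaften} after flattening; your discussion of the sector model and the exponent $\pi/(2\alpha)>1$ (equivalently $\alpha=\pi-\theta<\pi/2$ in the hydrophilic regime) is precisely the mechanism behind that citation. The positivity argument---superharmonicity rules out interior minima, Hopf with $\pa_{\nu_K}u=\gamma>0$ rules out minima on $\Si$, hence the minimum sits on $M$ where $u=0$---is identical.

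Step~(c) has a genuine gap. Your barrier depends on $x_{n+1}$ only, so $\nabla v=-(x_{n+1}+\gamma)e_{n+1}$ and the added constant $C\e(d_\Om+\gamma)^2$ has zero gradient; it therefore contributes \emph{nothing} to $\pa_{\nu_K}v$ and cannot ``absorb the $O(\e)$ error'' in the Neumann condition as you claim. On $\Si$ you only get $\pa_{\nu_K}v=-(x_{n+1}+\gamma)(e_{n+1}\cdot\nu_K)\ge\gamma-C'\e(d_\Om+\gamma)$, which may fall strictly below $\gamma$, so $v-u$ need not satisfy the sign condition on $\Si$ required to run the comparison. The repair is routine---insert a linear correction in $x_{n+1}$ to boost the Neumann flux, then let the constant restore $v\ge0$ on $M$---but as written the step does not close. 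For comparison, the paper takes a slightly different route: it sets
\[
v(x)=u(x)+\frac{|x|^2}{2(n+1)}+\frac{\gamma+\tfrac{\e}{n+1}d_\Om}{1-\e}\,x_{n+1},
\]
so that $v$ is \emph{harmonic} and, by the {\bf Almost Flatness} bounds, $\pa_{\nu_K}v\le0$ on $\Si$; Hopf then forces $\max_{\ov\Om}v$ onto $M$, where the explicit expression for $v$ gives the bound $4(d_\Om^2+\gamma^2)$ directly.
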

\begin{proof}
    The $C^1$-regularity of $u$ follows directly from \cite[Thm. 1]{azzam1981regularitatseigenschaften} after using a standard argument to locally straighten the boundary of $\partial\Omega$. 
    
    We now show that $u\geq 0$ by showing that its minimum is achieved on $\pa\Omega\cap K$, where $u$ vanishes. Indeed, the minimum can not be achieved in the interior by super harmonicity, and the minimum of $u$ cannot be achieved on $\Sigma$ by applying Hopf's lemma, since {\bf Hydrophilic} implies
    \begin{equation}
        \partial_\nu u(x) \;=\; \gamma \;\ge \; 0 \,.
    \end{equation}
    
    To show \eqref{C^0 est u}, consider $v\in C^2(\Omega\cup\Sigma)\cap C^1(\ov{\Omega})$ given by
    \begin{equation}
        v(x) := u(x) + \frac{|x|^2}{2(n+1)} + \frac{\gamma+\frac{\e}{n+1} d_\Omega}{1-\e} x_{n+1} \,,
    \end{equation}
    where $\e$ is given by the {\bf Almost Flatness} hypothesis.
    Then, $v$ is the unique solution to
    \begin{equation}\label{torsion potential}
        \begin{cases}
             \hspace{.45cm}-\Delta v=0&\mbox{ in $\Omega$}\\
             \hspace{.56cm} v(x)= \frac{|x|^2}{2(n+1)} + \frac{\gamma+\frac{\e}{n+1} d_\Omega}{1-\e} x_{n+1}&\mbox{ on $M$}\\
            \partial_{\nu_K}v=\gamma+\frac{1}{n+1}x\cdot\nu_K+\frac{\gamma+\frac{\e}{n+1} d_\Omega}{1-\e} e_{n+1}\cdot\nu_K&\mbox{ on $\Sigma$},
        \end{cases}
    \end{equation}
    By the {\bf Almost Flatness} hypothesis, we have that
    $$
        |x\cdot\nu_K|\le \e d_\Omega\qquad\mbox{and}\qquad e_{n+1}\cdot\nu_K\le -(1-\e),
    $$
    which implies that
    $$
        \partial_{\nu_K}v\le 0 \qquad\mbox{ on $\Sigma$.}
    $$
    By harmonicity and Hopf's lemma, we have that $v$ satisfies
    \begin{equation}
    \sup_{\ov{\Omega}} u - 2 (d_\Omega^2+\gamma^2)    \leq \sup_{\ov{\Omega}} v \,\leq\, \sup_M v \,=\,  \sup_{x \in M} \Big(\frac{|x|^2}{2(n+1)}+ \frac{\gamma+\frac{\e}{n+1} d_\Omega}{1-\e}x_{n+1}\Big) \,\leq\, 2 (d_\Omega^2+\gamma^2)\,.
    \end{equation}
    Thus, \eqref{C^0 est u} holds.
\end{proof}

\medskip

The main point of this section is to prove Propositions \ref{prop} and \ref{lem:deficitconvergence2}, which will be fundamental in the proof of our main compactness theorem.
 
\smallskip
 
\begin{proposition}\label{prop}
Under the hypothesis of Section~\ref{sec:hyp}, assume further that the substrate is {\bf Hydrophilic}, that $\Sigma$ satisfies {\bf Almost Flatness} with $\e$ small enough, and that the mean curvature $H_{M}$ is bounded away from zero. Then,
\begin{align}
      &\frac{|\Omega|-\gamma\H^{n}(\Sigma)}{n+1} \left( \int_{M} \frac{n}{H_M}d\H^n - (n+1)\big(|\Omega|-\gamma\H^{n}(\Sigma)\big) \right)\\
      &\qquad+2(n+1)\e \left(\int_M\frac{d\H^n}{H_M}\left(\frac{|\Omega|}{n+1}+4(d_\Omega^2+\gamma^2)(|\Omega|+\gamma \H^n(\Sigma))\right)+\max\frac{1}{H_M}\left(|\Omega|-\gamma\H^n(\Sigma)\right)^2 \right)\\
      &\ge (1-2(n+1)\e)\int_{M}\frac{d\H^n}{H_M} \int_{\Omega}\left(|\nabla^2u|^2-\frac{(\Delta u)^2}{n+1}\right)\;dx\\
      &\qquad+ \int_{M}\frac{d\H^n}{H_M}\int_{M}|\nabla u|^2 (H_M-2(n+1)\e)d\H^n  - \left(1-2(n+1)\e\max\frac{1}{H_M}\right)\left(\int_{M}|\nabla u|d\H^n\right)^2\\
      &\ge 0
      \label{big Reilly with curvature bounds} \,.
\end{align}
\end{proposition}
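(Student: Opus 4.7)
The strategy is a Ros-type argument \cite{ros87ibero}, adapted to the mixed boundary value problem \eqref{torsion potential}, combining Reilly's integral identity for the torsion potential $u$ with the scalar Cauchy--Schwarz inequality. Lemma~\ref{torsionC1} provides $u\in C^1(\overbar{\Omega})\cap C^2(\Omega\cup\Sigma)$; together with a standard approximation handling the corner along $\pa\Sigma$, this is enough to apply Reilly's identity
\[
\int_\Omega\bigl((\Delta u)^2-|\nabla^2 u|^2\bigr)\,dx=\int_{\pa\Omega}\Bigl[H_{\pa\Omega}(\pa_\nu u)^2+2(\pa_\nu u)\,\Delta_\tau u+A_{\pa\Omega}(\nabla_\tau u,\nabla_\tau u)\Bigr]\,d\H^n.
\]
I would split the right-hand side along $\pa\Omega=M\cup\Sigma$. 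On $M$ the Dirichlet condition $u=0$ forces $\nabla_\tau u=\Delta_\tau u=0$ and reduces the $M$-contribution to the clean term $\int_M H_M|\nabla u|^2\,d\H^n$. On $\Sigma$ the Neumann condition $\pa_{\nu_K}u=\gamma$ combined with the {\bf Almost Flatness} bounds $|H_\Sigma|,|A_\Sigma|\lesssim\e$ turns the first and third Reilly terms into $O(\e)$ errors, leaving only the cross-term $2\gamma\int_\Sigma\Delta_\Sigma u\,d\H^n$ to be handled.

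The key algebraic reduction is the following. Applying the divergence theorem on $\Sigma$ converts the cross-term into $2\gamma\int_{\pa\Sigma}\nabla u\cdot\nu_\Sigma\,d\H^{n-1}$, modulo an $O(\e)$ curvature correction. At each point of $\pa\Sigma$ the Dirichlet data on $M$ force $\nabla u$ to be parallel to $\nu_M$; decomposing $\nu_M=\cos\theta\,\nu_K+\sin\theta\,\nu_\Sigma$ and eliminating the magnitude via $\pa_{\nu_K}u=\gamma$ yields $\nabla u\cdot\nu_\Sigma=\gamma\tan\theta$ along $\pa\Sigma$. The cross-term therefore equals $2\gamma^2\int_{\pa\Sigma}\tan\theta\,d\H^{n-1}$, which collapses to a scalar multiple of $\gamma\H^n(\Sigma)$ by the very definition \eqref{gamma} of $\gamma$. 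This algebraic cancellation is exactly what the choice \eqref{gamma} is engineered to produce, and mirrors the flat-substrate computation of \cite{jia2022heintze}.

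Combining the rearranged Reilly identity with the pointwise trace bound $|\nabla^2 u|^2\ge(\Delta u)^2/(n+1)=1/(n+1)$ and the scalar Cauchy--Schwarz
\[
\Bigl(\int_M|\nabla u|\,d\H^n\Bigr)^2\le\int_M H_M|\nabla u|^2\,d\H^n\cdot\int_M\frac{d\H^n}{H_M},
\]
while using integration by parts against $\Delta u=-1$ to identify $\int_M|\nabla u|\,d\H^n$ with a linear combination of $|\Omega|$ and $\gamma\H^n(\Sigma)$, I would multiply through by $\int_M d\H^n/H_M$ to reach the first inequality in the statement. All $\Sigma$-corrections gather into the explicit $\e$-term on the left-hand side, estimated uniformly using the $L^\infty$-bound $\|u\|_\infty\le 4(d_\Omega^2+\gamma^2)$ from \eqref{C^0 est u} together with a standard energy estimate for $\int_\Sigma|\nabla_\tau u|^2$. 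The final $\ge 0$ of the chain then follows from three manifest positivities: the Hessian deficit $|\nabla^2 u|^2-(\Delta u)^2/(n+1)\ge 0$; the positivity of $H_M-2(n+1)\e$ once $\e$ is small relative to the lower bound on $H_M$; and the Cauchy--Schwarz inequality just displayed.

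The main obstacle is justifying Reilly's identity with the regularity at hand: classically it is stated for $C^2(\overbar{\Omega})$ test functions, while the mixed boundary value problem yields only $C^1$-regularity across $\pa\Sigma$. Pushing Reilly through by an approximation, or by a weak-formulation/trace-density argument resting on the $C^1$-regularity of Lemma~\ref{torsionC1}, is exactly why the {\bf Hydrophilic} restriction is imposed, as emphasized in Remark~\ref{rem:c1reg}. Beyond the regularity issue, the uniform control of each $\Sigma$-error term by $\e$ requires careful bookkeeping in terms of $|\Omega|$, $\gamma$, $d_\Omega$, and $\|u\|_\infty$ so as to reproduce the explicit $\e$-correction on the left-hand side exactly.
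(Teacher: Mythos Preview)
Your plan is correct and follows essentially the same route as the paper: the Reilly identity you sketch is exactly the content of Lemma~\ref{lemma Reillys} (including the cancellation of the $\Sigma$ cross-term via the choice \eqref{gamma} of $\gamma$), the $\e$-control of the $\Sigma$-curvature terms via an energy bound on $\int_\Sigma|\nabla u|^2$ is packaged as Lemma~\ref{boundgrad}, and the final nonnegativity is obtained by the same Cauchy--Schwarz argument you describe. The only point where the paper is more explicit than your outline is the ``standard energy estimate'' for $\int_\Sigma|\nabla u|^2$: it is obtained by applying the divergence theorem to the vector field $-|\nabla u|^2\,e_{n+1}$ and using {\bf Almost Flatness}, rather than by an abstract trace inequality.
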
 
\begin{proof}[Proof of Proposition~\ref{prop}]
By Lemma~\ref{lemma Reillys} and Lemma~\ref{boundgrad} we have that
\begin{align}
      &\frac{|\Omega|-\gamma\H^{n}(\Sigma)}{n+1} \left( \int_{M} \frac{n}{H_M}d\H^n - (n+1)\big(|\Omega|-\gamma\H^{n}(\Sigma)\big) \right)\\
      &\ge \int_{M}\frac{d\H^n}{H_M} \int_{\Omega}\left((1-2(n+1)\e)|\nabla^2u|^2-\frac{(\Delta u)^2}{n+1}\right)\;dx\\
      &\qquad+ \int_{M}\frac{d\H^n}{H_M}\int_{M}|\nabla u|^2 (H_M-2(n+1)\e)d\H^n  - \left(\int_{M}|\nabla u|d\H^n\right)^2.
\end{align}
The first inequaliy in \eqref{big Reilly with curvature bounds} now follows by using that $(\Delta u)^2=1$ and the divergence Theorem to obtain
$$
\left(\int_M |\nabla u|\right)^2=(|\Omega|-\gamma\H^n(\Sigma))^2.
$$

To obtain the positivity we notice that by Cauchy-Schwarz
$$
\inf \frac{H_M-C(n)\e}{H_M}\left(\int_{M}|\nabla u|\right)^2\le \left(\int_{M}|\nabla u|\frac{\sqrt{H_M-C(n)\e}}{\sqrt{H_M}}\right)^2\le \int_{M}\frac{1}{H_M}\int_{M}|\nabla u|^2 (H_M-C(n)\e),
$$
with $C(n)=2(n+1)$.
\end{proof}

\bigskip

\begin{proposition}\label{lem:deficitconvergence2}
Under the hypothesis of Section~\ref{sec:hyp}, assume further that the substrate is {\bf Hydrophilic} with constant $\delta_0$, that $\Sigma$ satisfies {\bf Almost Flatness}, and that $\e$, $\|\theta-\theta_0\|_{C^0}$ and $\frac{|H_M-\lambda|_{C_0}}{\lambda}$ are small enough. Then, we have the bound
\begin{align}\label{almost equality}
    &\left|(n+1)(|\Omega|-\gamma\H^n(\Sigma))-\int_M\frac{n}{H}\;d\H^n\right|\\
    &\qquad\le C(\delta_0)\left(\e+\|\theta-\theta_0\|_{C^0(\pa\Sigma)}+\frac{|H_M-\lambda|_{C_0}}{\lambda}\right)\left(d_\Omega+\frac{1}{\lambda}\right)\H^n(M) \,,
\end{align}
    where $\gamma$ is as in \eqref{gamma}.
\end{proposition}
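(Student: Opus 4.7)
The strategy is to mimic the classical double-divergence identity \eqref{double div thm} that underlies Alexandrov's theorem, but applied in the mixed-boundary setting, tracking the errors coming from (i) $H_M\approx\lambda$, (ii) $\theta\approx\theta_0$, and (iii) the Almost Flatness of $\partial K$. The building blocks are two applications of the divergence theorem to the position vector field $x$ and a careful analysis of the boundary term on $\partial M=\partial\Sigma$, where the definition \eqref{gamma} of $\gamma$ is used to produce the cancellation.

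First I would apply the ambient divergence theorem to $x$ in $\Omega$, splitting $\partial\Omega=M\cup\Sigma$. Since $\mathrm{div}(x)=n+1$ and $|x\cdot\nu_K|\le\e\,d_\Omega$ on $\Sigma$ by Almost Flatness,
\begin{equation*}
(n+1)|\Omega|=\int_M(x\cdot\nu_M)\,d\H^n+O\big(\e\,d_\Omega\,\H^n(\Sigma)\big).
\end{equation*}
Next, from the tangential divergence identity $n\H^n(M)=\int_M(x\cdot\nu_M)H_M\,d\H^n+\int_{\partial M}x\cdot\nu_{\partial M}\,d\H^{n-1}$ on $M$, the decomposition $1/H_M=1/\lambda+(\lambda-H_M)/(\lambda H_M)$ together with the smallness of $\|H_M-\lambda\|_{C^0}/\lambda$ (so that $H_M\ge\lambda/2$) yields
\begin{equation*}
\int_M\frac{n}{H_M}\,d\H^n=\int_M(x\cdot\nu_M)\,d\H^n+\frac{1}{\lambda}\int_{\partial M}x\cdot\nu_{\partial M}\,d\H^{n-1}+E_H,
\end{equation*}
with $|E_H|\le C(d_\Omega+\lambda^{-1})(\|H_M-\lambda\|_{C^0}/\lambda)\H^n(M)$. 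Combining the two identities,
\begin{equation*}
\int_M\frac{n}{H_M}-(n+1)(|\Omega|-\gamma\H^n(\Sigma))=\frac{1}{\lambda}\int_{\partial M}x\cdot\nu_{\partial M}+(n+1)\gamma\H^n(\Sigma)+E_H+O\big(\e\,d_\Omega\H^n(\Sigma)\big),
\end{equation*}
so the entire content of the proposition reduces to controlling the bracketed quantity on the right.

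The heart of the argument is the boundary term. Using \eqref{nuMbdM}, one writes
\begin{equation*}
\int_{\partial M}x\cdot\nu_{\partial M}=\int_{\partial\Sigma}\sin\theta\,(x\cdot\nu_K)\,d\H^{n-1}-\int_{\partial\Sigma}\cos\theta\,(x\cdot\nu_\Sigma)\,d\H^{n-1}.
\end{equation*}
The first integral is at most $\e\,d_\Omega\,\H^{n-1}(\partial\Sigma)$ by Almost Flatness. For the second, the plan is to split $\cos\theta=\cos\theta_0+(\cos\theta-\cos\theta_0)$ and to apply the tangential divergence theorem on $\Sigma$ (with ambient normal $\nu_K$, and $|H_\Sigma|\le n\e$ by $\|A_{\partial K}\|_{C^0}<\e$) to obtain $\int_{\partial\Sigma}(x\cdot\nu_\Sigma)=n\,\H^n(\Sigma)+O(\e^2 d_\Omega \H^n(\Sigma))$. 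This leaves a main contribution of the form $-n\cos\theta_0\,\H^n(\Sigma)/\lambda$, which must be matched against $(n+1)\gamma\H^n(\Sigma)$. The cancellation is provided by the definition \eqref{gamma}, written as $(n+1)\gamma\int_{\partial\Sigma}\tan\theta=-n\H^n(\Sigma)$: after replacing $\tan\theta$ by $\tan\theta_0$ up to $O(\|\theta-\theta_0\|_{C^0})$ and combining with the $\Sigma$-Alexandrov identity $\lambda\H^n(\Sigma)\approx\sin\theta_0\,\H^{n-1}(\partial\Sigma)$ (which itself is an instance of the same tangential divergence identity on $\Sigma$ up to $\e$ errors and $\|H_M-\lambda\|/\lambda$ errors), one extracts the leading-order relation $(n+1)\gamma\sin\theta_0\,\H^{n-1}(\partial\Sigma)+n\cos\theta_0\H^n(\Sigma)=O(\ldots)$ that produces the sought cancellation, with residual of exactly the claimed size.

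The principal obstacle is the book-keeping in the last step: extracting the $\gamma$ cancellation requires combining \eqref{gamma} with an \emph{auxiliary} approximate identity on $\Sigma$, each contributing their own $\|\theta-\theta_0\|_{C^0}$, $\e$, and $\|H_M-\lambda\|/\lambda$ errors; once combined, the resulting error terms must then be bounded by $C(\delta_0)\H^n(M)$ times the factor $(d_\Omega+1/\lambda)$. The Hydrophilic gap $\theta_0\ge\pi/2+\delta_0$ enters precisely to keep $|\cos\theta_0|$ and $|\tan\theta_0|$ controlled (in both directions) so that $\H^{n-1}(\partial\Sigma)/\lambda$ can be bounded by $C(\delta_0)\H^n(M)$, and this is what allows one to absorb the boundary integrals on $\partial\Sigma$ into the stated bound and to produce the dependence $C(\delta_0)$ in front of the final estimate.
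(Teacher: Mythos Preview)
Your proposal is correct and follows essentially the same route as the paper's proof: the double application of the divergence theorem to the position field $x$, the boundary analysis on $\partial M$ via \eqref{nuMbdM}, the tangential divergence on $\Sigma$, and the cancellation between $-n\cos\theta_0\,\H^n(\Sigma)/\lambda$ and $(n+1)\gamma\H^n(\Sigma)$ effected through the auxiliary identity $\lambda\H^n(\Sigma)\approx\sin\theta_0\,\H^{n-1}(\partial\Sigma)$ are exactly the steps the paper carries out (the auxiliary identity and the bound $\H^{n-1}(\partial\Sigma)\le C(\theta_0)\lambda\H^n(M)$ are packaged there as Lemma~\ref{boundpasigma}). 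The only organizational difference is that the paper first compares $(n+1)|\Omega|$ to $n\H^n(M)/\lambda$ and then separately estimates $|n\H^n(M)/\lambda-\int_M n/H_M|$, whereas you merge these into a single step; the content is the same.
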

\begin{proof}[Proof of Proposition~\ref{lem:deficitconvergence2}]
We consider the vector field $X=x$, and we apply the divergence theorem to obtain
\begin{equation}\label{proof prop 4.3 (i)}
    (n+1)|\Omega|\;=\;\int_{M}x\cdot\nu\,d\H^n\,+\,\int_\Sigma x\cdot\nu_V\,d\H^n \,.
\end{equation}
By {\bf Almost Flatness}, we have
$$
\left|\int_\Sigma x\cdot\nu_V\;d\H^n\right|\le \e d_\Omega\H^n(\Sigma) \,,
$$
which will become a part of the remainder. For the first term in \eqref{proof prop 4.3 (i)}, the tangential divergence theorem yields the following identity:
\begin{equation}\label{proof prop 4.3 (ii)}
    \int_{M}x\cdot\nu\;d\H^n=\frac{n}{\lambda}\H^n(M)+\frac{1}{\lambda}\int_{\pa\Sigma} x\cdot\nu_{co}^{M}\;d\H^{n-1}+ \int_M\left( 1-\frac{H_M}{
\lambda}\right) (x\cdot\nu)\;d\H^n \,.
\end{equation}
We notice the last term of \eqref{proof prop 4.3 (ii)} can be lumped into the remainder
\begin{equation}
    \left|\int_M\left( 1-\frac{H_M}{\lambda}\right) (x\cdot\nu)\;d\H^n\right|\le d_\Omega\H^n(M)\frac{|H_M-\lambda|_{C_0}}{\lambda} \,.
\end{equation}
For the second term of \eqref{proof prop 4.3 (ii)}, we use the almost constant angle condition to obtain the identity
\begin{align}
\int_{\pa\Sigma} x\cdot\nu_{co}^{M}\;d\H^{n-1}&=-\cos(\theta_0)\int_{\pa\Sigma}  x\cdot \nu_\Sigma \;d\H^{n-1}+\int_{\pa\Sigma} (\cos(\theta_0)-\cos(\theta))x\cdot \nu_\Sigma \;d\H^{n-1} \\ &\hspace{4.95cm}+\int_{\pa\Sigma}\sin(\theta) x\cdot \nu_V \;d\H^{n-1} \,. \label{a1}
\end{align}
The last two terms can be lumped into the remainder by the bounds
$$
\left|\int_{\pa\Sigma} (\cos(\theta_0)-\cos(\theta))x\cdot \nu_\Sigma \;d\H^{n-1}\right|\le  \|\theta-\theta_0\|_{C^0(\pa\Sigma)}d_\Omega \H^{n-1}(\pa\Sigma)
$$
and
$$
\left|\int_{\pa\Sigma}\sin(\theta) x\cdot \nu_V \;d\H^{n-1}\right|\le \e d_\Omega  H^{n-1}(\pa\Sigma).
$$
For the first term in \eqref{a1}, we can use the tangential divergence theorem on $\Sigma$ to obtain
$$
\int_{\pa\Sigma} x\cdot \nu_\Sigma \;d\H^{n-1}=n\H^n(\Sigma)-\int_{\Sigma}H_{\pa K}x\cdot \nu_V\;d\H^n,
$$
and we can bound
$$
\left|\int_{\Sigma}H_{\pa K} x\cdot \nu_V\;d\H^n\right|\le \e^2 d_\Omega\H^n(\Sigma)
$$
Putting all the previous identities together, we obtain that there exists a constant depending on $d_\Omega$, $\H^n(M)$, $\|H_\Sigma\|_{C^0}$ and $\H^{n-1}(\pa\Sigma)$, such that
\begin{align}
&\left|(n+1)|\Omega|+\frac{\cos(\theta_0)n\H^n(\Sigma)}{\lambda}-\frac{n\H^n(M)}{\lambda}\right|\\
&\quad\le \e d_\Omega\left(2\H^n(\Sigma)+\frac{\H^n(\pa\Sigma)}{\lambda}\right)+d_\Omega\H^n(M)\frac{|H_M-\lambda|_{C_0}}{\lambda}+\|\theta-\theta_0\|_{C^0(\pa\Sigma)}d_\Omega \frac{H^{n-1}(\pa\Sigma)}{\lambda}\,.\label{lambdabound}  
\end{align}
Next, using the mean curvature deficit we can bound the difference
$$
\left|\frac{n\H^n(M)}{\lambda}-\int_M\frac{n}{H}\;d\H^n\right|\;\le\;\frac{\|H-\lambda\|_{C^0}}{\lambda} \int_M \frac{n}{H}\;dH^n\;\le\;\frac{\|H-\lambda\|_{C^0}}{\lambda} 2n\frac{\H^n(M)}{\lambda} \,,
$$
where we have assumed that the mean curvature deficit is small enough.

Using Lemma~\ref{boundpasigma}, we have
\begin{align*}
   &\left|\frac{\cos(\theta_0)n\H^n(\Sigma)}{\lambda} - (n+1)\gamma_0\H^n(\Sigma)\right|\\
   &\qquad\le \cot\theta_0\left(\frac{6}{\sin(\theta_0)}(\e+\|\theta-\theta_0\|_{C^0})+\frac{\|H-\lambda\|_{C^0}}{\lambda}\right)\frac{\H^n(M)\H^n(\Sigma)}{\H^{n-1}(\pa\Sigma)},
\end{align*}
where
$$
\gamma_0=\frac{n}{n+1}\frac{\H^n(\Sigma)}{\tan(\theta_0)\H^{n-1}(\pa\Sigma)}.
$$
Using that $\theta$ is bounded away from $\pi/2$ by the hypothesis {\textbf{Hydrophilic}}, we have the bound
$$
|(n+1)\gamma_0\H^n(\Sigma)-(n+1)\gamma|\le C(\delta_0)\frac{\H^n(\Sigma)^2}{\H^{n-1}(\pa\Sigma)}\|\theta-\theta_0\|_{C^0}.
$$
Finally, combining the above inequalities with the bound on $\H^n(\pa\Sigma)$ from Lemma~\ref{boundpasigma}, we obtain the bound in the statement of the Proposition.
\end{proof}

\medskip

\subsection{Supporting lemmas} In this section, we develop the lemmas used in the proofs of Propositions~\ref{prop} and \ref{lem:deficitconvergence2}. We start by exploiting the boundary regularity of the torsion potential $u$ in the hydrophilic regime \eqref{u reg and pos} to prove a Reilly's-type identity.
\begin{lemma}\label{lemma Reillys}
Under the hypothesis of Section~\ref{sec:hyp}, assume further that the substrate is {\bf Hydrophilic}. Then, $u$ satisfies the identities
\begin{equation}\label{Reilly identity}
    \frac{n}{n+1}(|\Omega|-\gamma\H^n(\Sigma))+\int_\Omega \frac{(\Delta u)^2}{n+1}-|D^2u|^2=\int_M (u_\nu)^2 H +\int_\Sigma \gamma^2H_{\pa K} + A_{\pa K}[\nabla^{\Sigma} u, \nabla^{\Sigma} u] \,,
\end{equation}
and
\begin{align}
      \frac{|\Omega|-\gamma\H^{n}(\Sigma)}{n+1}&\left( \int_{M} \frac{n}{H} - (n+1)\big(|\Omega|-\gamma\H^{n}(\Sigma)\big) \right)  \\
      &= \int_{M}\frac{1}{H} \int_{\Omega}\left(|\nabla^2u|^2-\frac{(\Delta u)^2}{n+1}\right) - \left(\int_{M}|\nabla u|\right)^2 \\
      &\hspace{1.1cm}+ \int_{M}\frac{1}{H}\left(\int_{M}|\nabla u|^2H + \int_\Sigma \gamma^2 H_{\pa K} + A_{\pa K}[\nabla^{\Sigma} u,\nabla^{\Sigma} u] \right) \,.\;\;\; \label{big Reilly}
\end{align}

\end{lemma}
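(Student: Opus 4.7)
The plan for \eqref{Reilly identity} is a Reilly-type argument carefully adapted to the mixed Dirichlet--Neumann problem \eqref{torsion potential}; \eqref{big Reilly} will then follow from \eqref{Reilly identity} by a short rearrangement. I would start from the pointwise algebraic identity
\[
(\Delta u)^2 - |\nabla^2 u|^2 \;=\; \Div\bigl(\Delta u\,\nabla u - (\nabla^2 u)\,\nabla u\bigr),
\]
which is a direct computation using $(\nabla^2 u)\,\nabla u = \tfrac12\nabla|\nabla u|^2$. Integrating over $\Omega$ and applying the divergence theorem reduces the left-hand side to a boundary integral over $\pa\Omega = M\cup\Sigma$; this step is legitimate up to and including the contact line $\pa M = \pa\Sigma$ thanks to $u\in C^1(\overbar{\Omega})$ from Lemma~\ref{torsionC1}. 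Since $\Omega\subseteq K$, the outer unit normal to $\Omega$ agrees with $\nu_M$ on $M$ and with $\nu_K$ on $\Sigma$, so the boundary integral splits cleanly into an $M$- and a $\Sigma$-piece.

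The next step is to simplify each face-contribution using the boundary conditions. On $M$, $u\equiv 0$ forces $\nabla u = u_\nu\nu_M$ and $\Delta^M u = 0$; combined with the standard splitting $\Delta u = u_{\nu\nu} + H_M u_\nu + \Delta^M u$ and $\Delta u = -1$, the $M$-contribution collapses to $\int_M H_M\,u_\nu^2\,d\H^n$. On $\Sigma$, $u_{\nu_K}\equiv\gamma$ is constant, so differentiating $\nabla u\cdot\nu_K$ tangentially and using the symmetry of $\nabla^2 u$ yields $(\nabla^2 u)(X,\nu_K) = -A_{\pa K}(X,\nabla^\Sigma u)$ for every vector $X$ tangent to $\pa K$. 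Combining this with the decomposition $\nabla u = \gamma\,\nu_K + \nabla^\Sigma u$ and the splitting $\Delta u = u_{\nu_K\nu_K} + H_{\pa K}\gamma + \Delta_\Sigma u$ reduces the $\Sigma$-contribution to
\[
\int_\Sigma\bigl[\gamma^2\,H_{\pa K} + A_{\pa K}(\nabla^\Sigma u,\nabla^\Sigma u) + \gamma\,\Delta_\Sigma u\bigr]\,d\H^n.
\]

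The final ingredient is the evaluation of $\gamma\int_\Sigma\Delta_\Sigma u\,d\H^n$, which is the source of the $\tfrac{n\gamma}{n+1}\H^n(\Sigma)$ term visible on the left of \eqref{Reilly identity}. The tangential divergence theorem on $\Sigma$ turns this into a boundary integral along $\pa\Sigma$, and the key corner computation is to determine $\nabla^\Sigma u\cdot\nu_\Sigma$ at the contact line. I would match the two one-sided representations of $\nabla u$: from the $M$ side, $\nabla u = (\pa_{\nu_M}u)\,\nu_M$ because $u\equiv 0$ on $M$; from the $\Sigma$ side, $\nabla u = \gamma\,\nu_K + \nabla^\Sigma u$. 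Plugging in $\nu_M = \cos\theta\,\nu_K + \sin\theta\,\nu_\Sigma$ from \eqref{nuM} and equating normal and tangential components produces $\pa_{\nu_M} u = -\gamma/\cos\theta$ and $\nabla^\Sigma u\cdot\nu_\Sigma = \gamma\tan\theta$ along $\pa\Sigma$. The normalization \eqref{gamma} of $\gamma$ is engineered precisely so that $\gamma\int_{\pa\Sigma}\tan\theta\,d\H^{n-1}$ equals $-\tfrac{n}{n+1}\H^n(\Sigma)$; assembling everything and using $(\Delta u)^2 = 1$ to generate the $|\Omega|/(n+1)$ term yields \eqref{Reilly identity}.

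To obtain \eqref{big Reilly}, I would multiply \eqref{Reilly identity} through by $\int_M 1/H_M\,d\H^n$ and then substitute $\bigl(\int_M|\nabla u|\,d\H^n\bigr)^2 = (|\Omega|-\gamma\H^n(\Sigma))^2$. This last identity is elementary: integrate $\Delta u = -1$ over $\Omega$, use the Neumann condition $\pa_{\nu_K}u = \gamma$ on $\Sigma$, and note that $|\nabla u| = |u_\nu|$ on $M$ with $u_\nu$ of a single sign. The only genuine technical obstacle throughout is the matching of one-sided gradients at the contact corner $\pa M = \pa\Sigma$, which requires $u\in C^1(\overbar{\Omega})$ and is precisely what forces the hydrophilic hypothesis through Lemma~\ref{torsionC1}; without this regularity, neither the reduction of $\int_\Sigma\Delta_\Sigma u$ to the angle-dependent boundary integral nor the final Reilly assembly can be justified.
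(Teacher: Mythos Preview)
Your proposal is correct and follows essentially the same route as the paper: both start from the Bochner-type divergence identity $(\Delta u)^2-|\nabla^2 u|^2=\Div(\Delta u\,\nabla u-\nabla^2 u\,\nabla u)$, split the boundary integral over $M\cup\Sigma$, reduce the $M$-piece to $\int_M H\,u_\nu^2$ via the Dirichlet condition, reduce the $\Sigma$-piece to $\int_\Sigma\bigl(\gamma^2 H_{\pa K}+A_{\pa K}[\nabla^\Sigma u,\nabla^\Sigma u]+\gamma\,\Delta_\Sigma u\bigr)$ via the constant Neumann condition, and then evaluate $\gamma\int_\Sigma\Delta_\Sigma u$ by the tangential divergence theorem together with the corner identity $\nabla^\Sigma u\cdot\nu_\Sigma=\gamma\tan\theta$ (the paper phrases this last step as $\nabla u\cdot\nu_{\bd(M)}=0$, which is equivalent to your gradient-matching). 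One small slip: equating $\nu_K$-components gives $\pa_{\nu_M}u=\gamma/\cos\theta$ rather than $-\gamma/\cos\theta$, but your stated value of $\nabla^\Sigma u\cdot\nu_\Sigma$ and the rest of the argument are unaffected.
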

\begin{proof}
We re-write the following expression as a divergence:
\begin{equation}
(\Delta u)^2-|D^2u|^2=\nabla\cdot (\nabla u \Delta u-D^2u\cdot \nabla u) \,.
\end{equation}
Hence, by using $-\Delta u=1$, we have
\begin{eqnarray}\label{aux1}
\frac{n}{n+1}|\Omega|+\int_\Omega \frac{(\Delta u)^2}{n+1}-|D^2u|^2
=\int_{\partial\Omega} u_\nu \Delta u- D^2u[\nabla u,\nu] \,.
\end{eqnarray}
On $M$, we can use Reilly's identity. Indeed, because $u=0$ on $M$ and $\nabla u= u_\nu \nu$, we have that pointwise on $M$
\begin{equation}
u_\nu \Delta u- D^2u[\nabla u,\nu]= u_\nu \sum_{i=1}^nD^2u[\tau_i,\tau_i]=u_\nu\Delta_{M}u+ H(u_\nu)^2=H(u_\nu)^2 \,,
\end{equation}
where $\{\tau_i\}_{i=1}^n$ is a basis of the tangent space and $\Delta_{M}$ is the Laplace-Beltrami operator on $M$. Therefore,
\begin{equation}\label{aux2}
\int_M u_\nu \Delta u- D^2u[\nabla u,\nu]=\int_M H(u_\nu)^2 \,.
\end{equation}
Next, given a point $p\in\Sigma$ and a basis $\{\tau_i\}_{i=1}^n$ for $T_p\Sigma$ we find pointwise
\begin{align}
    u_\nu & \Delta u- D^2u[\nabla u,\nu] \\
    &=\; u_\nu \left\{ \sum_{i=1}^n D^2u[\tau_i,\tau_i] + D^2u[\nu,\nu]  \right\} \;-\; \left\{\sum_{i=1}^n \langle \nabla u,\tau_i \rangle \, D^2u[\tau_i,\nu] + \langle \nabla u,\nu \rangle \, D^2u[\nu,\nu] \right\} \\
    \;&=\; u_\nu \, \sum_{i=1}^n D^2u[\tau_i,\tau_i] \;-\; \sum_{i=1}^n \langle \nabla u,\tau_i \rangle \, D^2u[\tau_i,\nu] \\
    \;&=\; u_\nu \, \Delta_\Sigma u \,+\, H\,(u_\nu)^2 \;-\; \sum_{i=1}^n \langle \nabla u,\tau_i \rangle \, D^2u[\tau_i,\nu] \,. \label{div term on Sigma 1}
\end{align}
We now compute the Hessian term in the sum to be
\begin{align}
    D^2u[\tau_i,\nu] 
    \;&=\; \langle D_{\tau_i}\nabla u, \nu \rangle \\
    \;&=\; D_{\tau_i} \langle \nabla u,\nu \rangle \,-\, \langle \nabla u, D_{\tau_i}\nu \rangle \\ 
    \;&=\; 0 \,-\, \langle \nabla^\Sigma u, D_{\tau_i}\nu \rangle \,-\, u_\nu \langle \nu, D_{\tau_i}\nu \rangle \\ 
    \;&=\; \,-\, A_\Sigma[\nabla^\Sigma u, \tau_i] \,,
\end{align}
where we used the facts that $u_\nu$ is constant on $\Sigma$ (and hence $D_{\tau_i}u_\nu \equiv 0$) and that \begin{equation}
    \langle \nu, D_{\tau_i} \nu \rangle = D_{\tau_i} \langle \nu,\nu \rangle - \langle D_{\tau_i} \nu , \nu \rangle = - \langle D_{\tau_i} \nu , \nu \rangle \,,
\end{equation}
which directly implies that $\langle \nu, D_{\tau_i} \nu \rangle \equiv 0$. Thus, we can rewrite \eqref{div term on Sigma 1} to be 
\begin{align}
    u_\nu \Delta u- D^2u[\nabla u,\nu]\;&=\; u_\nu \, \Delta_\Sigma u \,+\, H(u_\nu)^2 \;-\; \sum_{i=1}^n \langle \nabla u,\tau_i \rangle \, D^2u[\tau_i,\nu] \\
    \;&=\; u_\nu \, \Delta_\Sigma u \,+\, H(u_\nu)^2 \,+\, \sum_{i=1}^n \langle \nabla u,\tau_i \rangle \, A_\Sigma[\nabla^\Sigma u, \tau_i] \\
    \;&=\;  u_\nu \, \Delta_\Sigma u \,+\, H(u_\nu)^2 \,+\, A_\Sigma[\nabla^\Sigma u, \nabla^\Sigma u]
\end{align}
Hence, by the divergence theorem on $\Sigma$, we have
\begin{align}
    \int_\Sigma u_\nu \Delta u- D^2u[\nabla u,\nu] 
    \;&=\; \gamma \int_\Sigma \Delta_\Sigma u + \gamma^2 H_{\Sigma} + A_\Sigma[\nabla^\Sigma u,\nabla^\Sigma u] \\ 
    \;&=\; \gamma \int_{\pa\Sigma}\nabla u\cdot\nu_\Sigma \,+\, \int_\Sigma \gamma^2 H_\Sigma + A_\Sigma[\nabla^\Sigma u,\nabla^\Sigma u] \,. \label{temp id 3}
\end{align}
We now use the definition of the angle to write the co-normal of $M$ on $\partial\Sigma$ as
\begin{equation}
\nu_{co}=-\cos(\theta)\nu_\Sigma+\sin(\theta)\nu_K \,.
\end{equation}
As noted above, we have that $\nabla u = u_\nu \nu$ on $M$, and hence on $\partial M = \partial\Sigma$ by the $C^1$ regularity on $\ov{\Omega}$. So, we find that pointwise on $\partial\Sigma$
\begin{equation}
0=\nabla u\cdot \nu_{co}=-\cos(\theta)\nabla u\cdot \nu_\Sigma+\gamma \sin(\theta) \,.
\end{equation}
We thus arrive at the pointwise identity 
\begin{equation}
\nabla u\cdot \nu_\Sigma=\gamma\frac{\sin(\theta)}{\cos(\theta)} \quad \textnormal{on } \partial\Sigma \,,
\end{equation}
which we substitute into \eqref{temp id 3} to find
\begin{align}
    \int_\Sigma u_\nu \Delta u- D^2u[\nabla u,\nu]
    \;&=\; \gamma^2\int_{\partial\Sigma}\frac{\sin(\theta)}{\cos(\theta)} \,+\, \int_\Sigma \gamma^2 H_\Sigma+ A[\nabla^\Sigma u,\nabla^\Sigma u] \\
    \;&=\; \gamma \frac{n}{n+1}\H^n(\Sigma) \,+\, \int_\Sigma \gamma^2 H_\Sigma + A[\nabla^\Sigma u,\nabla^\Sigma u] \,, \label{aux3}
\end{align}
where the last equality follows from the definition of $\gamma$. The identity \eqref{Reilly identity} then follows by putting together \eqref{aux1}, \eqref{aux2} and \eqref{aux3}.

By the divergence theorem one has
    \begin{equation}
      |\Omega|-\gamma\H^n(\Sigma) = \int_M|\nabla u| \,,
    \end{equation}
    so by using this identity and \eqref{Reilly identity} we can directly expand the right-hand-side of \eqref{big Reilly} to find
    \begin{align}
      &\int_{M} \frac{1}{H} \int_{\Omega}\left(|\nabla^2u|-\frac{(\Delta u)^2}{n+1}\right) + \int_{M}\frac{1}{H}\left(\int_{M}|\nabla u|^2H + \int_\Sigma \gamma^2 H + A[\nabla^{\Sigma} u,\nabla^{\Sigma} u] \right) - \left(\int_{M}|\nabla u|\right)^2 \\
      &= \int_{M} \frac{1}{H} \left(\int_{\Omega}|\nabla^2u|-\frac{(\Delta u)^2}{n+1} + \int_M |\nabla u|^2H + \int_\Sigma \gamma^2 H + A[\nabla^{\Sigma} u,\nabla^{\Sigma} u] \right) - \left(\int_{M}|\nabla u|\right)^2 \\
      &= \left(\int_{M} \frac{1}{H} \right) \frac{n}{n+1} \big(|\Omega|-\gamma\H^{n-1}(\Sigma)\big)   \;-\; \big(|\Omega|-\gamma\H^{n}(\Sigma)\big)^2 \\
      &= \frac{|\Omega|-\gamma\H^{n}(\Sigma)}{n+1} \left( \int_{M} \frac{n}{H} - (n+1)\big(|\Omega|-\gamma\H^{n}(\Sigma)\big) \right) \,,
    \end{align}
    which is \eqref{big Reilly}.
\end{proof}

\medskip

Next, we utilize the {\bf Almost Flatness} hypothesis to control the extra the terms in the wetted region.
\begin{lemma}\label{boundgrad}
Under the hypothesis of Section~\ref{sec:hyp}, assume further that the substrate is {\bf Hydrophilic}, $\Sigma$ satisfies {\bf Almost Flatness} with parameter $\e<1/2$, then 
\begin{align}
      &\left|\int_\Sigma \big( \gamma^2 H_\Sigma + A_\Sigma[\nabla^\Sigma u,\nabla^\Sigma u] \big) \,d\H^n \right|\\ 
      \;&\qquad\leq\; 2(n+1)\e \left(\int_\Omega\big(|D^2u|^2+|\nabla u|^2\big)dx+\int_M |\nabla u|^2\;d\H^n \right)\\
      \;&\qquad\leq\; 2(n+1)\e \left(4(d_\Omega^2+\gamma^2)(|\Omega|+\gamma\H^n(\Sigma))+\int_\Omega |D^2u|^2\;dx+\int_M |\nabla u|^2\;d\H^n \right)\,, \;\;\;\;\;\;\;\; \label{est on A[Du,Du]}
\end{align}
     where $u\in C^2(\Omega\cup\Sigma)\cap C^1(\ov{\Omega})$ is the unique solution to \eqref{torsion potential}.
\end{lemma}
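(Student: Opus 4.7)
The plan is to first derive a pointwise bound on $\gamma^2 H_\Sigma+A_\Sigma[\nabla^\Sigma u,\nabla^\Sigma u]$ by a multiple of $\e|\nabla u|^2$ on $\Sigma$, then transfer $\int_\Sigma|\nabla u|^2$ to bulk and $M$-integrals via the divergence theorem applied to $-e_{n+1}|\nabla u|^2$, and finally, for the second inequality, to bound $\int_\Omega|\nabla u|^2$ by integrating by parts against $u$ and applying the $C^0$ estimate from Lemma~\ref{torsionC1}.

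For the pointwise estimate, \textbf{Almost Flatness} gives $|A_{\pa K}|\le\e$ on $\Sigma$, hence $|H_\Sigma|\le n\e$ and $|A_\Sigma[\nabla^\Sigma u,\nabla^\Sigma u]|\le\e|\nabla^\Sigma u|^2$ pointwise. Since $\pa_{\nu_K}u=\gamma$ on $\Sigma$, the orthogonal decomposition $|\nabla u|^2=\gamma^2+|\nabla^\Sigma u|^2$ furnishes both $\gamma^2\le|\nabla u|^2$ and $|\nabla^\Sigma u|^2\le|\nabla u|^2$, so
\[
\big|\gamma^2 H_\Sigma+A_\Sigma[\nabla^\Sigma u,\nabla^\Sigma u]\big|\;\le\; n\e\gamma^2+\e|\nabla^\Sigma u|^2\;\le\;(n+1)\e|\nabla u|^2\qquad\mbox{on $\Sigma$.}
\]
This reduces the first inequality to bounding $\int_\Sigma|\nabla u|^2$ by the right-hand side. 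Applying the divergence theorem to $-e_{n+1}|\nabla u|^2$ in $\Omega$, together with \textbf{Almost Flatness} (which gives $-e_{n+1}\cdot\nu_K\ge 1-\e$ on $\Sigma$ and $|{-}e_{n+1}\cdot\nu_M|\le 1$ on $M$) and Young's inequality $|\pa_{n+1}|\nabla u|^2|\le 2|D^2u||\nabla u|\le|D^2u|^2+|\nabla u|^2$, yields
\[
(1-\e)\int_\Sigma|\nabla u|^2\,d\H^n\;\le\;\int_\Omega\big(|D^2u|^2+|\nabla u|^2\big)\,dx+\int_M|\nabla u|^2\,d\H^n;
\]
since $\e<1/2$, dividing by $1-\e\ge 1/2$ and multiplying by $(n+1)\e$ establishes the first inequality in \eqref{est on A[Du,Du]}.

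For the second inequality, integration by parts against $u$, using $-\Delta u=1$ in $\Omega$, $u=0$ on $M$, and $\pa_{\nu_K}u=\gamma$ on $\Sigma$, gives
\[
\int_\Omega|\nabla u|^2\,dx\;=\;\int_\Omega u\,dx+\gamma\int_\Sigma u\,d\H^n.
\]
Lemma~\ref{torsionC1} provides $u\ge 0$ in $\Omega$ and $\|u\|_{C^0(\Omega)}\le 4(d_\Omega^2+\gamma^2)$, and $\gamma>0$ in the hydrophilic regime since $\tan\theta<0$ on $\pa\Sigma$; the right-hand side is therefore at most $4(d_\Omega^2+\gamma^2)(|\Omega|+\gamma\H^n(\Sigma))$, completing the proof. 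The only delicate step is the divergence-theorem transfer through the near-vertical alignment $-e_{n+1}\cdot\nu_K\ge 1-\e$ on $\Sigma$; this is the critical geometric input from \textbf{Almost Flatness} and is what preserves the small $\e$ prefactor when exchanging $\int_\Sigma|\nabla u|^2$ for bulk and top-cap contributions.
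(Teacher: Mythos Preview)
Your proof is correct and follows essentially the same approach as the paper's: both derive the pointwise bound $|\gamma^2 H_\Sigma+A_\Sigma[\nabla^\Sigma u,\nabla^\Sigma u]|\le(n+1)\e|\nabla u|^2$ on $\Sigma$, transfer $\int_\Sigma|\nabla u|^2$ to bulk and $M$ via the divergence theorem applied to $-e_{n+1}|\nabla u|^2$ together with $-e_{n+1}\cdot\nu_K\ge 1-\e$, and then handle the second inequality by integrating by parts against $u$ and invoking the $C^0$ bound from Lemma~\ref{torsionC1}. Your explicit remark that $\gamma>0$ in the hydrophilic regime (justifying $\int_\Omega u+\gamma\int_\Sigma u\le\|u\|_{C^0}(|\Omega|+\gamma\H^n(\Sigma))$) is a small point the paper leaves implicit.
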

\begin{proof} We consider the vector field $-|\nabla u|^2 e_{n+1}$, and apply the divergence theorem to obtain
$$
\int_{\Sigma} |\nabla u|^2 (-e_{n+1}\cdot\nu_\Sigma)\,d\H^{n}=\int_{M} |\nabla u|^2 e_{n+1}\cdot\nu_M\;d\H^n+2\int_\Omega \nabla u\cdot \nabla \pa_{n+1}u\;dx
$$
From the {\bf Almost Flatness} hypothesis, we obtain
\begin{equation}\label{e1}
(1-\e)\int_{\Sigma} |\nabla u|^2 \,d\H^{n}\le \int_{M} |\nabla u|^2\;d\H^n+\int_\Omega |D^2u|^2+|\nabla u|^2\;dx \,.
\end{equation}
Using the previous inequality with $\e<1/2$, we can directly bound
    \begin{align}
        \left|\int_\Sigma \big( \gamma^2 H_\Sigma + A_\Sigma[\nabla u,\nabla u] \big) \,d\H^n \right| 
        \;&\leq\; (n+1)\,\|A_\Sigma\|_{C^0(\Sigma)}\, \int_\Sigma |\nabla u|^2\,d\H^n \\
        \;&\leq\; 2  (n+1)\,\|A_\Sigma\|_{C^0(\Sigma)}\, \left(\int_\Omega\big(|D^2u|^2+|\nabla u|^2\big)dx + \int_M |\nabla u|^2\,d\H^n \right) \,.
    \end{align}
    The first inequality in \eqref{est on A[Du,Du]} then follows from the {\bf Almost Flatness} hypothesis $\|A_\Sigma\|_{C^0(\Sigma)}<\e$. For the second inequality, we integrate by parts and use \eqref{torsion potential} to obtain
    $$
        \int_\Omega |\nabla u|^2\;dx= \int_\Omega u\;dx+\gamma \int_\Sigma u\;d\H^n\le \|u\|_{\infty}(|\Omega|+\gamma \H^n(\Sigma)) \,,
    $$
    and the desired inequality now follows from applying the $L^\infty$-estimate \eqref{C^0 est u}.
\end{proof}

Finally, we use the {\bf Hydrophilic} and {\bf Almost Flatness} hypotheses to control the perimeter of the wetted region.

\begin{lemma}\label{boundpasigma}
Under the hypothesis of Section~\ref{sec:hyp}, assume further that the substrate is {\bf Hydrophilic}, $\Sigma$ satisfies {\bf Almost Flatness}, and that $\e$ and $\|\theta-\theta_0\|_{C^0}$ are small enough. Then, we have the bounds
$$
        \H^{n-1}(\pa\Sigma)\le \frac{2}{\sin(\theta_0)}\left(\lambda\H^n(\Sigma)+\|H-\lambda\|_{C^0}\H^n(M)\right)
$$
and
$$
    \left|\frac{\cos\theta_0}{\lambda}-\frac{\H^n(\Sigma)}{\tan\theta_0 \H^{n-1}(\pa \Sigma)}\right|\le \cot\theta_0\left(\frac{6}{\sin(\theta_0)}(\e+\|\theta-\theta_0\|_{C^0})+\frac{\|H-\lambda\|_{C^0}}{\lambda}\right)\frac{\H^n(M)}{\H^{n-1}(\pa\Sigma)} \,.
$$
\end{lemma}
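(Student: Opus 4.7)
The plan is to derive the key approximate identity
\[\sin\theta_0\,\H^{n-1}(\pa\Sigma)=\lambda\,\H^n(\Sigma)+R\]
with a quantitative remainder $R$, from which both stated bounds follow by straightforward algebra. The identity itself will come from applying the divergence theorem twice to the constant vector field $X=e_{n+1}$: once in the bulk $\Omega$ and once tangentially on $M$ (noting that the tangential divergence of any constant vector field vanishes).

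The bulk divergence theorem on $\Omega$ gives $\int_M\nu_M\cdot e_{n+1}\,d\H^n=-\int_\Sigma\nu_K\cdot e_{n+1}\,d\H^n$, which by the \textbf{Almost Flatness} bound $|1+\nu_K\cdot e_{n+1}|<\e$ yields $\int_M\nu_M\cdot e_{n+1}=\H^n(\Sigma)+O(\e\,\H^n(\Sigma))$. The tangential divergence theorem on $M$ gives $\int_M H_M(\nu_M\cdot e_{n+1})\,d\H^n=-\int_{\pa M}\nu_{\bd(M)}\cdot e_{n+1}\,d\H^{n-1}$. Substituting formula \eqref{nuMbdM} for $\nu_{\bd(M)}$ and using \textbf{Almost Flatness} to control $|1+\nu_K\cdot e_{n+1}|$ and $|\nu_\Sigma\cdot e_{n+1}|$ (the latter linearly in $\e$, via integrating the small second fundamental form $\|A_{\pa K}\|_{C^0}<\e$ along a bounded-length curve on $\pa K$), one obtains $\nu_{\bd(M)}\cdot e_{n+1}=-\sin\theta+O(\e)$. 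Combining these with the angle estimate $\sin\theta=\sin\theta_0+O(\|\theta-\theta_0\|_{C^0})$ and the mean curvature approximation $H_M=\lambda+O(\|H_M-\lambda\|_{C^0})$, one arrives at the identity with
\[|R|\le C\big(\e+\|\theta-\theta_0\|_{C^0}\big)\big(\lambda\,\H^n(\Sigma)+\H^{n-1}(\pa\Sigma)\big)+\|H_M-\lambda\|_{C^0}\,\H^n(M)\,.\]

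For the first stated bound, since $\e$ and $\|\theta-\theta_0\|_{C^0}$ are assumed small enough that $C(\e+\|\theta-\theta_0\|_{C^0})<\sin\theta_0/2$, the $\H^{n-1}(\pa\Sigma)$-term inside $|R|$ can be absorbed back into the left-hand side of the identity. The resulting bound, together with $\H^n(\Sigma)\le\H^n(M)/(1-\e)$ (which comes directly from the bulk divergence identity), produces $\H^{n-1}(\pa\Sigma)\le\frac{2}{\sin\theta_0}(\lambda\,\H^n(\Sigma)+\|H_M-\lambda\|_{C^0}\H^n(M))$. For the second bound, note the algebraic identity
\[\frac{\cos\theta_0}{\lambda}-\frac{\H^n(\Sigma)}{\tan\theta_0\,\H^{n-1}(\pa\Sigma)}=\frac{\cot\theta_0}{\lambda\,\H^{n-1}(\pa\Sigma)}\,R\,,\]
which reduces the problem to bounding $|R|/(\lambda\,\H^{n-1}(\pa\Sigma))$. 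Substituting the expression for $|R|$ and using the first bound to control $\lambda\,\H^n(\Sigma)$ and $\H^{n-1}(\pa\Sigma)$ by $\H^n(M)$ with explicit coefficients produces the stated form, with the precise constants $6/\sin\theta_0$ and $1$ emerging from careful tracking of the various absorption factors.

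The main technical obstacle is producing the linear-in-$\e$ pointwise control of $|\nu_\Sigma\cdot e_{n+1}|$: a naive orthogonality argument combined with $|\nu_K+e_{n+1}|<\sqrt{2\e}$ gives only $O(\sqrt{\e})$, too weak for the stated constant. The desired linear bound must instead be obtained by integrating the smallness $\|A_{\pa K}\|_{C^0}<\e$ along a path on $\pa K$ of bounded length, propagating the pointwise normal bound from a reference point where $\nu_K=-e_{n+1}$. Beyond this point the proof is a careful bookkeeping of the error terms.
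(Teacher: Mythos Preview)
Your approach is essentially the same as the paper's: apply the tangential divergence theorem on $M$ to the constant field $X=e_{n+1}$ to obtain $0=\int_M H_M(e_{n+1}\cdot\nu_M)+\int_{\pa\Sigma}e_{n+1}\cdot\nu_{\bd(M)}$, control the first integral via the bulk divergence theorem on $\Omega$ and the mean-curvature deficit, estimate the boundary term using \eqref{nuMbdM} together with \textbf{Almost Flatness} and the angle deficit, and then deduce both stated bounds from the resulting approximate identity by the same absorb-and-substitute algebra.

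One remark: you are right to flag the bound on $|e_{n+1}\cdot\nu_\Sigma|$. The paper simply writes this term as $O(\e)$ without comment, whereas the pointwise hypothesis $|1+e_{n+1}\cdot\nu_K|<\e$ alone only yields $|e_{n+1}\cdot\nu_\Sigma|\le|e_{n+1}+\nu_K|<\sqrt{2\e}$ via $\nu_\Sigma\perp\nu_K$. Your proposed fix (integrate $\|A_{\pa K}\|_{C^0}<\e$ along a curve in $\pa K$ from a reference point) does give the linear bound; alternatively one can note that this term enters multiplied by $\cos\theta$, which is itself bounded away from $\pm1$ in the \textbf{Hydrophilic} regime, so the discrepancy only affects the numerical constant and not the structure of the lemma. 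Since the statement allows $\e$ small enough and the constant $6/\sin\theta_0$ is not sharp in either argument, this is a cosmetic rather than substantive point.
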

  \begin{proof}
    Using the divergence theorem on the vector field $X=e_{n+1}$, we have that
      \begin{equation}\label{temp formula}
          0=\int_M \Div X\;d\H^n=\int_{M} H X\cdot \nu_\Omega\;d\H^n+\int_{\pa \Sigma} \big( \sin\theta X\cdot \nu_K-\cos\theta X\cdot \nu_\Sigma\big)\;d\H^{n-1} \,.
      \end{equation}
      For the first term of \eqref{temp formula}, we use the almost constancy of the mean curvature to obtain
      $$
      \int_{M} H X\cdot \nu_\Omega\;d\H^n\le \|H-\lambda\|_C^0\H^n(M)+\lambda \int_{\Sigma}X\cdot \nu_K\;d\H^n \,,
      $$
      and then, by using {\bf Almost Flatness}, we further find that
      $$
        \left|\lambda\H^n(\Sigma)-\int_{M} H X\cdot \nu_\Omega\;d\H^n\right|\le \e \lambda\H^n(\Sigma) +\|H-\lambda\|_{C^0}\H^n(M) \,.
      $$
      For the second term of \eqref{temp formula}, we also use {\bf Almost Flatness} to obtain
      \begin{align*}
        &\left|\sin\theta_0 \H^{n-1}(\pa \Sigma)+\int_{\pa \Sigma}\big( \sin\theta X\cdot \nu_K-\cos\theta X\cdot \nu_\Sigma \big)\;d\H^{n-1}\right| \\
        &\hspace{5cm}\le \H^{n-1}(\pa \Sigma) \|\theta-\theta_0\|_{C^0}+ \e \H^{n-1}(\pa\Sigma) \,.
      \end{align*}
      Hence, we find
      \begin{align}
       \left|\sin\theta_0 \H^{n-1}(\pa \Sigma)+\lambda\H^n(\Sigma)\right| \;&\le\; \e\lambda \H^n(\Sigma) +\|H-\lambda\|_{C^0}\H^n(M) \\  \;&\hspace{2.13cm}+(\e+\|\theta-\theta_0\|_{C^0}) \H^{n-1}(\pa\Sigma) \,, \label{1b}
      \end{align}
      which, for $\e$ and $\|\theta-\theta_0\|_{C^0}$ small enough, implies
      $$
        \H^{n-1}(\pa\Sigma)\le \frac{2}{\sin(\theta_0)}\Big(\lambda\H^n(\Sigma)+\|H-\lambda\|_{C^0}\H^n(M)\Big) \,.
      $$
      Substituting this bound into \eqref{1b}, we get that
      $$
            \left|\sin\theta_0 \H^{n-1}(\pa \Sigma)+\lambda\H^n(\Sigma)\right|\le 4 \left(\frac{2}{\sin(\theta_0)}\lambda(\e+\|\theta-\theta_0\|_{C^0})+\|H-\lambda\|_{C^0}\right)\H^n(M) \,.
      $$
      The desired estimate now follows from dividing by $\sin\theta_0 \H^{n-1}(\pa \Sigma)\lambda$ and multiplying by $\cos\theta_0$.
  \end{proof}

\section{Compactness}\label{sec:compactness}
\subsection{GMT preliminaries}\label{sec:GMT}
In the proof of Theorem \ref{mainthm}, we will make use of the theory of varifolds. Additionally, we will weaken the technical hypothesis of the convergence in perimeter \eqref{hyp: conv in per} by appealing to the deep result of White \cite[Thm. 1.2]{whiteCurrents}, which relates the limits of ``compatible'' sequences of currents and varifolds. Here, we follow the notation of \cite[Chapters 6 and 8]{simon1983lectures}.

\medskip

Let $K$ be a bounded open set with $C^2$-boundary, and let $\Om\subseteq K$ be a bounded connected open set such that $M=\ov{K\cap\pa\Om}$ is a smooth hypersurface with boundary satisfying the hypotheses {\bf(h1)}--{\bf(h4)}. Denote by $M$ and $\Sigma$ the sets $M=\partial\Omega\cap K$ and $\Sigma = \partial\Omega\cap\pa K$. Then, we will let $V$, and $W$ denote the multiplicity one varifolds associated to $M$ and $\partial\Omega$, respectively; that is, for $\psi\in C^0_c\big(\R^{n+1}\times G(n+1,n)\big)$ we define
\begin{align}
  V[\psi] \,&:=\, \int_{M} \psi(p,T_pM)d\H^n \label{def V grass} \,, \\
  W[\psi] \,&:=\, \int_{M} \psi(p,T_pM)d\H^n +\int_{\Sigma} \psi(p,T_p\Sigma)d\H^n \label{varifold assoc bd Omega} \,.
\end{align}
In terms of notation, we sometimes take test functions $\psi\in C^0_c\big(\R^{n+1}\times G(n+1,n)\big)$ that are constant on the second variable $\psi(p,\cdot) \,=\, \varphi(p)\,,$ where $\varphi\in C^0_c(\R^{n+1})$. Through this consideration, for $\varphi\in C^0_c(\R^{n+1})$ we denote
\begin{align}
  V[\varphi] \,&:=\, \int_{M} \varphi\,d\H^n \label{def V no grass} \,.
\end{align}
We will reserve the symbols $\varphi$ for test functions $\varphi\in C^0_c(\R^{n+1})$ which only depend on the space variable and $\psi$ for general test functions $\psi\in C^0_c\big(\R^{n+1}\times G(n+1,n)\big)$.

Next, given $X\in C^\infty_c(\R^{n+1};\R^{n+1})$, the first variation of $V$ and $W$ are given by 
\begin{align}
  \delta V[X] \;&:=\; \int_{M} \Div_\tau X d\H^n=\int_{M} H_M X\cdot\nu_{M} d\H^n + \int_{\bd(M)} X\cdot\nu_{\bd(M)} d\H^{n-1}\\
  \delta W[X] \;&:=\; \int_{M} \Div_\tau X d\H^n+\int_{\Sigma} \Div_\tau X d\H^n=\int_{M} H_M X\cdot\nu_{M} d\H^n + \int_{\bd(M)} X\cdot(\nu_{\bd(M)}+\nu_{\Sigma}) d\H^{n-1}\,, \label{first var gen V}
\end{align}
where $\Div_\tau$ denotes the tangential divergence operator on $M$. Note that the representation on the right-hand-side follows from the regularity we have assumed, see Section~\ref{sec:hyp}.

\medskip

We now state a simplified version of \cite[Thm. 1.2]{whiteCurrents} sufficient for our setting:

\begin{theorem}\label{white's theorem}
  Let $\{\Omega_l\}_{l\in\N}$ be a sequence of compact sets with uniformly bounded finite perimeter and $\{W_l\}_{l\in\N}$ be the natural multiplicity one varifolds associated to the boundaries $\{\pa\Omega_l\}_{l\in\N}$ as in \eqref{varifold assoc bd Omega}.
  
  If the varifolds $\{W_l\}_{l\in\N}$ have uniformly bounded first variation, then, up to a subsequence, one has that $\Omega_l \to \Omega_\infty$ in $L^1$ and 
  \begin{equation}\label{white's thm representation}
      W_l \;\to\; W_\infty \,=\, \Xi_1\,d\H^{n}|_{\pa \Omega_\infty} \,+\, \Xi_0\,d\H^{n}|_{S} \,,
  \end{equation}
  where $S$ is a rectifiable set disjoint from $\partial\Omega_\infty$ and $\Xi_1$ and $\Xi_0$ are integer valued Borel functions satisfying $\Xi_1=1\pmod{2}$ and $\Xi_0=0\pmod{2}$.
\end{theorem}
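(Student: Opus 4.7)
The plan is to extract subsequences via standard varifold/BV compactness and then exploit the fact that each $W_l$ arises as the boundary of a set to pin down the parity structure of the limiting multiplicities. First, the uniform bound on $P(\Omega_l)$ yields $L^1$-compactness of the indicator functions, so along a subsequence $\Omega_l \to \Omega_\infty$ in $L^1$ for some set of finite perimeter $\Omega_\infty$. Since the total mass of $W_l$ equals $\H^n(\pa\Omega_l) = P(\Omega_l)$, and since the first variations are uniformly bounded by hypothesis, Allard's compactness theorem yields a further subsequence with $W_l \to W_\infty$ as varifolds, where $W_\infty$ has bounded first variation. Because each $W_l$ has unit multiplicity on a smooth hypersurface, Allard's integrality/rectifiability theorem forces $W_\infty$ to be integer rectifiable, so one can write $W_\infty = \Xi\, d\H^n|_M$ for some countably $n$-rectifiable $M\subseteq \R^{n+1}$ and some positive integer multiplicity $\Xi$.

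Next, I would switch viewpoint to mod-$2$ rectifiable currents. Let $T_l$ denote the boundary of $\Omega_l$ viewed as a $\Z/2$-current; its mass equals $P(\Omega_l)$ and hence agrees with the mass of $W_l$. The $L^1$-convergence $\Omega_l \to \Omega_\infty$ upgrades, via the closure theorem for mod-$2$ currents, to flat convergence $T_l \to T_\infty := \pa \Omega_\infty$, which is a $\Z/2$-cycle supported on $\pa\Omega_\infty$ with multiplicity $1$ a.e.\ there. Lower semicontinuity of mass under flat convergence of $\Z/2$-currents, together with the varifold convergence $W_l \to W_\infty$, gives the pointwise domination $|T_\infty| \le \mu_{W_\infty}$ as Radon measures on $\R^{n+1}$.

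The final ingredient is a pointwise density comparison. At $\H^n$-a.e.\ $x\in M$ both $W_\infty$ and $T_\infty$ admit unique approximate tangent planes and well-defined integer multiplicities. A blow-up argument at such $x$ shows that any limit of the sets $\Omega_l$ localized at $x$ is a half-space (or empty), and that each sheet of the approximate tangent plane carrying multiplicity of $W_\infty$ is either cancelled in pairs under the mod-$2$ boundary operation or survives with multiplicity $1$. Consequently the mod-$2$ density of $T_\infty$ at $x$ equals the parity of $\Xi(x)$. On $\pa\Omega_\infty$ the density of $T_\infty$ is $1$, whence $\Xi \equiv 1 \pmod 2$; on the residual rectifiable set $S := M \setminus \pa\Omega_\infty$ one has $T_\infty\equiv 0$, forcing $\Xi \equiv 0 \pmod 2$. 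Setting $\Xi_1 := \Xi|_{\pa\Omega_\infty}$ and $\Xi_0 := \Xi|_S$ yields the claimed decomposition \eqref{white's thm representation}.

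The main obstacle is the pointwise parity identification: a priori the multiplicity of $W_\infty$ on a piece of $\pa\Omega_\infty$ could disagree in parity with that of $T_\infty$, and controlling the way sheets of the varifold accumulate and cancel in the boundary current requires the rectifiable slicing machinery at the core of \cite[Thm.~1.2]{whiteCurrents}. In practice we invoke that theorem directly in this simplified setting rather than reprove its technical heart.
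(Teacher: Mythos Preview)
The paper does not prove this statement at all: it is presented purely as a simplified restatement of \cite[Thm.~1.2]{whiteCurrents}, with no argument given. Your proposal therefore goes \emph{beyond} what the paper does, by sketching the strategy behind White's result (BV and Allard compactness, passage to mod-$2$ currents, and a blow-up density comparison to read off parities), and you correctly flag at the end that the actual parity identification is precisely the content of White's theorem and must be invoked rather than reproved.

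One comment on your sketch: the step where you assert that ``each sheet of the approximate tangent plane carrying multiplicity of $W_\infty$ is either cancelled in pairs under the mod-$2$ boundary operation or survives with multiplicity $1$'' is exactly the nontrivial point. It is not a consequence of a generic blow-up argument alone, since a priori the varifold limit and the current limit could decouple at the level of multiplicities (the varifold sees mass, the current sees oriented/mod-$2$ structure, and one needs a careful slicing argument to link them). You acknowledge this, so there is no gap per se, but it is worth being explicit that this is where the full strength of White's theorem enters and why one cannot bypass the citation.
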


\subsection{Proof of Theorem~\ref{mainthm}}
We now state and prove Theorem \ref{mainthm} in its precise form:

\begin{manualtheorem}{1.6}
Let $\{K_l\}_{l\in\N}$ be a sequence of connected open sets with connected $C^2$-boundaries that converge to the half space in the following sense:
\begin{equation}\label{containers conv to half space}
\|A_l\|_{C^0_{loc}(\pa K_l)}\to 0,\qquad \|e_{n+1}\cdot\nu_{K_l}\|_{C^0_{loc}(\pa K_l)}\to 0\qquad\mbox{and}\qquad \|x\cdot\nu_{K_l}\|_{C^0_{loc}(\pa K_l)}\to 0\,,
\end{equation}
where $A_l$ is the second fundamental form of $\pa K_l$.

If $\{\Omega_l\}_{l\in\N}$ is a sequence of connected open sets which satisfy uniform bounds in diameter and perimeter,
     \begin{equation}\label{hyp: per bnd}
     \sup_{l\in\N}\diam(\Omega_l)<\infty \,, \qquad\sup_{l\in\N} P(\Omega_l;\{x_{n+1}>0\})<\infty \,,
     \end{equation}
then, up to subsequence, there exists a bounded set of finite perimeter $\Omega_\infty\subseteq\{x_{n+1}>0\}$ such that $|\Omega_l\Delta\Omega_\infty|\to0$ and for which the following holds: if
the boundaries $\{M_l\}_{l\in\N}$ satisfy hypotheses {\bf(h1)}-{\bf(h4)} from Section~\ref{sec:hyp}, if there exists a positive constant $\lambda_0>0$ such that
    \begin{equation}\label{hyp: conv mc}
        \left\|H_l-\lambda_0\right\|_{C^0(M_l)} \to 0 \,,
    \end{equation}
if there exists a fixed angle $\theta_0\in(\pi/2,\pi)$ such that
\begin{equation}\label{hyp: conv angle}
    \|\theta_l-\theta_0\|_{C^0(\bd(M_l))}\to 0\,,
  \end{equation}
and if the set $S$ in \eqref{white's thm representation} satisfies
 \begin{equation}\label{hyp: conv in per2}
      \H^n(S \cap \{x_{n+1}>0\}) = 0 \,,
 \end{equation}
then $\Omega_\infty$ is equal to a finite union of disjoint tangent balls of radius $n/\lambda_0$ intersected with $\{x_{n+1}>0\}$, the balls in $\Omega_\infty$ which intersect the hyperplane $\{x_{n+1}=0\}$ do so with contact angle $\theta_0$, and
\begin{equation}
    P(\Omega_l;K_l)\to P(\Omega_\infty;\{x_{n+1}>0\}) \,.
\end{equation}
Moreover, the wetted regions converge in the following sense:
    \begin{equation}\label{eq:a2}
    \lim_{l\to\infty}  |\H^n(\Sigma_l)-\H^n(\Sigma_\infty)|+|\H^{n-1}(\pa\Sigma_l)-\H^{n-1}(\pa\Sigma_\infty )| \;=\; 0 \,.
    \end{equation}
\end{manualtheorem}

\begin{remark}
    The technical hypothesis \eqref{hyp: conv in per2} is only used in the first step of the proof to establish \eqref{supp V_infty = pa Omega_infty} and may be able to be removed using additional arguments.
\end{remark}


\begin{proof}[Proof of Theorem~\ref{mainthm}]
Without loss of generality, we may simplify our notation by considering the specific case of $\lambda=n$.

\textit{Notation.} We will let $C$ denote a generic constant independent of $l$ which may depend upon $n$, $R$, $\theta_0$, $\sup_{l\in\N}|\gamma_l|$, and $\sup_{l\in\N}P(\Omega_l)$ and which we will allow to change line-by-line. For all $l\in\N$, we will let $V_l$ denote the multiplicity-one varifold induced by $M_l$.  Next, for each $l\in\N$, we will let $u_l:\Omega_l\to\R$ denote the torsion potential of $\Omega_l$, that is to say the unique solution to
\begin{equation}\label{tp mid proof}
  \left\{\begin{split}
    -\Delta u_l=1\;\;&\qquad\mbox{in $\Om_l$}
    \\
    u_l=0\;\;&\qquad\mbox{on $M_l$}
    \\
    \pa_{\nu_{K_l}}u_l=\gamma_l&\qquad\mbox{on $\Sigma_l$}
  \end{split}\right .
\end{equation}
with $\gamma_l$ as in \eqref{gamma}. By an abuse notation, we will also denote $u_l:K_l\to\R$ to be the zero extension of $u_l$ to $K_l$.

\medskip

\textit{Step one.} By hypothesis there exists $\Omega_\infty \subseteq \{x_{n+1}\ge 0\}$ a set of finite perimeter such that
\begin{align}
    &|\Omega_l \Delta \Omega_\infty| \,\to\, 0 \,. \label{conv in volume}
\end{align}
We now claim that there exists an integral varifold $V_\infty$ such that (up to a subsequence)
\begin{align}
    &V_l \weakstar V_\infty \,, \quad\;\; {\rm and } \quad \delta V_l \weakstar \delta V_\infty \label{varifold conv}
\end{align}
and, moreover, there exists a set of finite perimeter $\Sigma_\infty\subset\{x_{n+1}=0\}$, such that, up to passing to a subsequence, one has the convergence of the wetted region
\begin{align}
    &d\H^n|\Sigma_l\weakstar d\H^n|\Sigma_\infty\label{conv in wetted}
\end{align}
as $l\to\infty$.

We claim that the sequence of varifolds $\{V_l\}_{l\in\N}$ have uniformly bounded mass and first variation. That they have uniformly bounded mass follows directly from \eqref{hyp: per bnd}. For the first variation, we consider $X\in C^\infty_c(\R^{n+1};\R^{n+1})$ and compute
\begin{equation}\label{first variation}
  \delta V_l[X] = \int_{M_l}\Div_\tau X d\H^n = \int_{M_l} H_l X\cdot\nu_{M_l} d\H^n + \int_{\bd(M_l)} X\cdot\nu_{\bd(M_l)} d\H^n \,,
\end{equation}
where $\nu_{M_l}$, $\nu_{\bd(M_l)}$ denote the outer unit normal of $M_l$, and the outer unit conormal of $\bd(M_l)$. Therefore, by \eqref{hyp: per bnd}, Lemma~\ref{boundpasigma}, and \eqref{containers conv to half space}, we have the uniform bound 
\begin{align}
  &\sup_{l\in\N} |\delta V_l|[X] \;\leq\; C \|X\|_{C^0(\R^{n+1})} 
\end{align}
so that by the compactness theorem for integral varifolds \cite[42.7]{simon1983lectures} there exists an integral varifold $V_\infty$ such that, up to a subsequence, \eqref{varifold conv} holds. The convergence in \eqref{conv in wetted} follows by similar arguments.

We conclude this step by noting that by \eqref{hyp: conv in per2} it directly follows that
\begin{equation}\label{supp V_infty = pa Omega_infty}
    \supp V_\infty \cap \{x_{n+1}>0\} \;=\; \overline{\partial^*\Omega_\infty} \cap \{x_{n+1}>0\} \;=\;\partial\Omega_\infty \cap \{x_{n+1}>0\} \,,
\end{equation}
where the last equality is obtained by choosing the representative $\Omega_\infty$ such that its topological boundary coincides with the closure of the reduced boundary $\overline{\partial^*\Omega_\infty}=\partial\Omega_\infty$, see \cite[Proposition 12.19]{maggi2012sets}.

\bigskip

\textit{Step two.}
We now claim that, up to passing to a subsequence, for every $\eta>0$ the following limit holds:
\begin{equation}\label{hd conv varifolds}
  \lim_{l\to\infty} \hd\left(M_l\cap \{x_{n+1}>\eta\}, \, \partial\Omega_\infty \cap \{x_{n+1}>\eta\right) = 0 \,.
\end{equation}
This will follow from a uniform density estimate stemming from the bounded mean curvature hypotheses. By \eqref{containers conv to half space}, \eqref{hyp: conv mc}, and the monotonicity formula for integral varifolds \cite[17.7]{simon1983lectures}, there exists a constant $C>0$ independent of $l$ and $\eta$ such that for all $\eta>0$ and $l$ large enough such that $\Sigma_l\subseteq\{x_{n+1}<\eta/2\}$ the following holds: for any $p\in M_l\cap \{x_{n+1}>\eta\}$ and $\rho\in(0,\eta/2)$ one has
\begin{equation}\label{density est}
  \H^n\big( M_l\cap B_\rho(p) \big) \geq \omega_n\rho^n \,.
\end{equation}
We will now prove that 
\begin{equation}\label{hd inc 1}
  \lim_{l\to\infty}\sup_{p\in M_l\cap \{x_{n+1}>\eta\}} \dist(p,\partial\Omega_\infty)=0.
\end{equation}
Suppose for sake of contradiction that \eqref{hd inc 1} does not hold. Then, there exists $\eta\,,\e>0$ and a sequence of points $\{p_l\}$ such that  $p_l \in M_l\cap \{x_{n+1}>\eta\}$ and $\dist(p_l,\partial\Omega_\infty)\geq\e$. By the uniform diameter of the sets $\{\Omega_l\}_{l\in\N}$, the sequence $\{p_l\}_{l\in\N}$ has a convergent subsequence, which we will not relabel. Let $p_\infty$ denote this limit, and note that $\dist(p_\infty,\partial\Omega_\infty)\ge\e$. Let $\varphi\in C^\infty_c(B_{\e}(p_\infty))$ be such that $0\leq\varphi\leq1$ and $\varphi\equiv1$ on $B_{\e/2}(p_\infty)$. By the convergence of the sequence $p_l\to p_\infty$, for $l$ large enough we have $B_{\e/4}(p_l)\subseteq B_{\e/2}(p_\infty)$. Up to taking $l$ larger, so that we ensure $\Sigma_l\subseteq\{x_{n+1}<\eta/2\}$ by \eqref{containers conv to half space}, for $l$ large enough the density estimate \eqref{density est} yields
\begin{equation}
  C\;\frac{\min\{\e^n,\eta^n\}}{4^n} \;\leq\; \int_{M_l} \varphi\,d\H^n  \;=\; V_l[\varphi]\,.
\end{equation}
However, this contradicts the fact that $V_l[\varphi]\to V_\infty[\varphi]=0$ by \eqref{varifold conv}, where we know that $V_\infty[\varphi]=0$ by $\dist(p_\infty,\partial\Omega_\infty)>\e$ and \eqref{supp V_infty = pa Omega_infty}. Hence, \eqref{hd inc 1} must hold.

To prove the converse limit,
\begin{equation}\label{hd inc 2}
  \lim_{l\to\infty}\sup_{p\in \pa\Omega_\infty\cap \{x_{n+1}>\eta\}} \dist(p,M_l)=0 \,,
\end{equation}
we again argue by contradiction. Suppose that there exists $\eta,\e>0$ and $p\in\partial\Omega_\infty\cap \{x_{n+1}>\eta\}$ such that for all $l$ large enough one has $\dist(p,M_l)>\e$. Let $\varphi\in C^\infty_c(B_{\e/2}(p))$ be such that $0\leq\varphi\leq1$ and $\varphi=1$ on $B_{\e/4}(p)$. Then, by \eqref{varifold conv}, \eqref{supp V_infty = pa Omega_infty}, and the fact that $\dist(p,M_l\cap \{x_{n+1}>\eta\})>\e$ for all $l$, one has
\begin{equation}
  0 < V_\infty[\varphi] = \lim_{l\to\infty} V_l[\varphi] = 0 \,,
\end{equation}
a contradiction. Therefore, we conclude that \eqref{hd conv varifolds} holds as claimed. 

\bigskip

\textit{Step three.} For each $l\in\N$, let $u_l:\Omega_l\to\R$ be the unique solution to \eqref{tp mid proof}. Then, we claim that
\begin{equation}\label{conv of grad and hess u}
  \lim_{l\to\infty} \int_{\Omega_l} \left| \nabla^2 u_l - \frac{\Id}{n+1} \right|^2dx \;=\; 0
\end{equation}
and
\begin{equation}\label{unif estimates on u}
    \sup_{l\in\N}  \|u_l\|_{H^1(K_l)} \;<\; \infty \,.
\end{equation}
Combining Propositions ~\ref{prop}, \ref{lem:deficitconvergence2} and Lemma~\ref{boundpasigma}, we can use the hypothesis in the Theorem to conclude that
\begin{align*}
    0\;=\;
    &\lim_{l\to\infty} \bigg[(1-2(n+1)\e)\int_{M_l}\frac{d\H^n}{H_{M_l}} \int_{\Omega_l}\left(|\nabla^2u_l|^2-\frac{(\Delta u_l)^2}{n+1}\right)\;dx\\
    &\qquad\quad+ \int_{M_l}\frac{d\H^n}{H_{M_l}}\int_{M_l}|\nabla u_l|^2 (H_{M_l}-2(n+1)\e)d\H^n  \\
    &\qquad\quad- \left(1-2(n+1)\e\max\frac{1}{H_{M_l}}\right)\left(\int_{M_l}|\nabla u_l|d\H^n\right)^2\bigg] \,.   
\end{align*}
Using that the two last terms in the previous limit are always positive, since the mean curvature is bounded above and the perimeter bounded below, we may conclude that
$$
\lim_{l\to\infty} \int_{\Omega_l}\left(|\nabla^2u_l|^2-\frac{(\Delta u_l)^2}{n+1}\right)\;dx=0.
$$
Considering the matrices $I$ and $\nabla^2u_l$ as vectors in $v\,,w\in\R^{(n+1)^2}$, we notice that the above integrand can be written as
$$
|\nabla^2u_l|^2-\frac{(\Delta u_l)^2}{n+1}=|w|^2-\frac{v\cdot w}{|v|^2}
$$
and hence is exactly quantifying the deficit in the inequality of Cauchy-Schwarz. Following the arguments in \cite[Proposition 3.7]{delgadinomaggimihailaneumayer}, we conclude that
$$
\lim_{l\to\infty} \frac{1}{2}\int_{\Omega_l}\left|\nabla^2u_l -\frac{\Id}{n+1}\right|^2\;dx=0.
$$

The estimate \eqref{unif estimates on u} follows directly by integration by parts, as the equation and the boundary conditions for $u_l$ yield
\begin{align*}
    \int_{K_l}|\nabla \tilde{u}_l|^2\;dx
    \;&=\; \int_{\Omega_l}|\nabla u_l|^2\;dx \\
    \;&=\; \int_{\Omega_l}u_l\;dx+\gamma_h\int_{\Sigma_l}u_l\;d\H^n \\
    \;&\le\; \|u_l\|_{L^\infty} (|\Omega_l|+\gamma_l \H^n(\Sigma_l)) \,,  
\end{align*}
where the uniform $L^\infty$-bound is given by \eqref{C^0 est u}.

\bigskip

\textit{Step four.}  By \eqref{unif estimates on u} and the asymptotic flatness of the domain, there exists a positive function $u_\infty \in H^1(\{x_{n+1}>0\})$ such that 
\begin{equation}\label{conv u_h to u_infty}
    u_l \to u_\infty  \quad \textnormal{strongly in } L^2_{loc}(\{x_{n+1}>0\})  \quad \textnormal{and} \quad
    u_l \rightharpoonup u_\infty \quad \textnormal{weakly in } H^1_{loc}(\{x_{n+1}>0\}) \,.
\end{equation}
We now claim that
\begin{equation}\label{hessID}
    D^2 u_\infty= \frac{\Id}{n+1}\qquad\mbox{in $\Omega_\infty$}
\end{equation}
and, moreover, there exists a finite number of disjoint balls $\{B_{i}(y_i)\}_{i\in I}$ with radii satisfying $r_i=1\pmod2$  such that
\begin{equation}\label{Omega is balls}
    \Omega_\infty\cap\{x_{n+1}>0\} \;=\; \bigcup_{i\in I} B_i\cap \{x_{n+1}>0\} \,.
\end{equation}

\medskip

We begin by showing \eqref{hessID} holds in distribution. Let $z\in \Omega_\infty\cap \{x_{n+1}>0\}$, and let $\e>0$ be such that $d(z,\pa\Omega_\infty)>\e>0$. By the Hausdorff convergence \eqref{hd conv varifolds}, we can take $l$ large enough such \begin{equation}
    d\big(\pa\Omega_l\cap \{x_{n+1}>\eta\},\pa \Omega_\infty\cap \{x_{n+1}>\eta\}\big)\le \e/2
\end{equation}
for $\eta=z_{n+1}/2$, which implies that $B_{\e/2}(z)\subseteq \Omega_l$ for $l$ large enough. By \eqref{conv of grad and hess u} and the previous inclusion, one can directly see that for any $\phi\in C^\infty_c(B_{\e/2}(z))$
\begin{equation}
    \frac{\Id}{n+1}\int \phi\,dx \;=\; \lim_{l\to \infty}\int D^2u_l\phi\,dx \;=\; \lim_{l\to \infty}\int u_l D^2\phi\,dx\;=\;\int u_\infty D^2\phi\;dx \,,
\end{equation}
which shows \eqref{hessID} in distribution. This readily implies that $u_\infty\in C^\infty(\Omega_\infty)$ and that its nodal set has zero Lebesgue measure, i.e.  $|\{u_\infty=0\}\cap\Omega_\infty|=0$. Thus, in each connected component of $\Omega_\infty=\bigcup \Omega_\infty^i$, there exists $y_i \in \R^{n+1}$ and $r_i\in (0,\infty)$ such that
\begin{equation}\label{explicitvalue}
    u_\infty(x)=\frac{|x-y_i|^2-r_i^2}{2(n+1)} \qquad \textnormal{in }\;  \Omega_\infty^i \,.
\end{equation}
Next, using the weak $H^1$ convergence and integration by parts, we have that for any test function $\vphi\in C^\infty_c(\{x_{n+1}>0\})$
$$
\int_{\{x_{n+1}>0\}}\nabla u_\infty \phi\;dx\;=\;\lim_{l\to\infty}\int_{\Omega_l}\nabla u_l \phi\;dx\;=\;-\lim_{l\to\infty}\int_{\Omega_l}u_l \nabla \phi\;dx\;=\;-\int_{\Omega_\infty} u_\infty \nabla \phi\;dx \,,
$$
which by the uniform continuity of $u_\infty$ on $\Omega_\infty$ implies that the trace vanishes
$$
u_\infty=0 \qquad\mbox{on $\pa\Omega_\infty=0$}\,.
$$
We therefore may directly conclude that each component 
\begin{equation}\label{int characterization of Om}
    \Omega_\infty^i=B_{r_i}(y_i)\cap\{x_{n+1}>0\} \,,
\end{equation}
as claimed.

To show that the radii satisfy $r_i=1\pmod2$, we use the varifold convergence of the first variations $\delta V_{\pa\Omega_l}\rightharpoonup\delta V_{\pa\Omega_\infty}$ and Theorem~\ref{white's theorem}. Given a vector field $X\in C^\infty_c(H;\R^{n+1})$ whose support does not intersect $\{x_{n+1}=0\}$, we can use \eqref{supp V_infty = pa Omega_infty} and \eqref{int characterization of Om} to write
\begin{equation}
    \delta V_\infty [X] \;=\; \int_{\pa\Omega_\infty}\Div_\tau X\;\Xi_1\,d\H^n \;=\; \sum_{i\in I} \frac{n}{r_i} \int_{\pa B_{r_i}(y_i)} X\cdot\nu \;\Xi_1\,d\H^n  \,,
\end{equation}
and by using the convergence of the mean curvature \eqref{hyp: conv mc} we can explicitly compute the limit
\begin{align}
    \delta V_\infty[X] 
        &= \lim_{h\to \infty} \int_{M_h} \Div_\tau X \,d\H^n \\
        &= \lim_{h\to \infty} n\int_{M_h} X\cdot\nu \,d\H^n \\
        &= \lim_{h\to \infty} n\int_{\Omega_h} \Div_{n+1} X \,dx \\
        &=n \int_{\Omega_\infty} \Div_{n+1} X \,dx \label{delta V_infty volume} \\
        &=n\int_{\partial\Omega_\infty} X\cdot\nu \,d\H^n  \\ 
        &=\sum_{i\in I} n\int_{\partial B_{r_i}(y_i)} X\cdot\nu \,d\H^n\,,
\end{align}
Hence we may directly conclude that $r_i$ is constantly equal to $\Xi_1$ in each spherical cap $\partial B_{i}$, and the fact that $r_i=\Xi_1=1\pmod2$ follows from Theorem \ref{white's theorem}.

\bigskip

\textit{Step five.} In this step we show that for all the radii $r_i$ are all equal to $1$, and to do so we will use a sliding argument. Fix an index $i\in I$. If $r_i=1$, then we are of course done, so suppose for sake of contradiction that $r_i\neq1$. By the previous step, we have that $r_i\geq3$. If $B_{r_i}(y_i)$ does not intersect the plane $\{x_{n+1}=0\}$, the proof that $r_i=1$ follows from obvious modifications of the argument below. Thus, we will assume that $B_{r_i}(y_i)$ intersects the plane $\{x_{n+1}=0\}$ with angle $\theta_i$. We handle the cases $\theta_i\in(0,\pi/2)$ and $\theta_i\in[\pi/2,\pi)$ separately. 

\medskip

{\bf Case I:} $\theta_i\in(0,\pi/2)$. By the Hausdorff convergence of the boundary we have that for any $\e>0$ and any $\eta>0$ there exists $l$ large enough such that $\Sigma_l\subset\{x_{n+1}<\eta/2\}$ and
\begin{equation}
    B_{r_i-\e}(y_i)\cap \{x_{n+1}>\eta\}\subseteq \Omega_l\cap \{x_{n+1}>\eta\} \,.
\end{equation}
Using that $\theta_i\in(0,\pi/2)$, if we translate $\partial B_{r_i-\e}(y_i)\cap \{x_{n+1}>\eta\}$ vertically then the first touching point with $\partial\Omega_l$ can not happen at $\{x_{n+1}=\eta\}$ for $\eta$ small enough, which implies that the curvatures are ordered $\frac{n}{r_i-\e}\ge H_{\partial\Omega_l}>n-\delta$. Using that $r_i \ge 3$, we get a contradiction by taking $\e$ and $\eta$ small enough.

\medskip

{\bf Case II:} $\theta_i\in[\pi/2,1)$. We first define the height
\begin{equation}\label{h_i}
    h_i := -y_i\cdot e_{n+1} \,,
\end{equation}
which is the distance of the center of the ball $B_1(y_i)$ from the plane $\{x_{n+1}=0\}$ (note the convention $h_i\geq0$).
We will now let $\e>0$ be such that 
\begin{equation}\label{sliding eps}
    \e \;<\; \min\left\{\frac{1}{2r_i}\,,\; \frac{r_i-h_i}{r_i+h_i} \right\}
\end{equation}
By the local Hausdorff convergence of the boundary (setting $\eta=\e$ for simplicity), for $l$ large enough
\begin{equation}
   \emptyset\ne M_l \cap B_{r_i+\e}(y_i)\cap \{x_{n+1}>\e\} \;\subseteq\; B_{r_i-\e}(y_i)^c\cap \{x_{n+1}>\e\}\,.
\end{equation}
If necessary, we will take $l$ larger so that $\Sigma_l\subset\{x_{n+1}<\e\}$. For $t\in(0,1+\e)$ the mapping
\begin{equation}\label{sliding}
    t \;\mapsto\; B_{r_i-1}(y_i+te_{n+1})\cap\{x_{n+1}>\e\}
\end{equation}
corresponds to vertically sliding the ball $B_{r_i-1}(y_i)$ until it touches the boundary $\partial B_{r_i+\e}(y_i)$, which serves as an upper for the furthest one would need to translate in order to touch $M_l$. If the sliding ball \eqref{sliding} does not intersect the boundary of the disk $B_{r_i-\e}(y_i) \cap \{x_{n+1}=\e\}$ for any $t\in(0,1+\e)$, then the first point of contact with $M_l$ will be tangential, and the same argument will hold as in {\bf Case I}, thereby allowing us to conclude that $r_i=1$. Thus, we are reduced to showing that
\begin{equation}\label{sliding min dist}
    \min_{t\in[0,1+\e]}\dist\Big(\partial B_{r_i-1}(y_i+te_{n+1})\cap \{x_{n+1}=\e\}\,,\, \partial B_{r_i-\e}(y_i)\cap \{x_{n+1}=\e\}\Big) \;>\;0 \,.
\end{equation}
We note that both of the sets considered in \eqref{sliding min dist} are disks with the same center, hence to check the desired property we need to show that for $t\in[0,1+\e]$
$$
{\rm radius} (\partial B_{r_i-1}(y_i+te_{n+1})\cap \{x_{n+1}=\e\})< {\rm radius} (\partial B_{r_i-\e}(y_i)\cap \{x_{n+1}=\e\}).
$$
Computationally, we break the proof into two subcases, {\bf I} when the minimum is achieved for some $t\in(0,1+\e)$, and {\bf II} when it is achieved at $t=1+\e$. To alleviate the notation, without loss of generality we consider the case that $y_i$ is on the $x_{n+1}$-axis.


\medskip

{\bf Subcase I:} There exists a time $t_0\in(0,1+\e)$ such that
\begin{equation}\label{caseI}
{\rm radius} (\partial B_{r_i-1}((-h_i+t_0)e_{n+1})\cap \{x_{n+1}=\e\})=r_i-1,    
\end{equation}
which implies that we have the inequality
\begin{equation}\label{eq:li}
h_i=t_0-\e< 1,
\end{equation}
as $t_0\in(0,1+\e)$.
Using the Pythagorean theorem, we see that the desired inequality \eqref{sliding min dist} is equivalent to the claim 
\begin{equation}\label{trig2}
    {\rm radius}\big( B_{r_i-\e}(-(h_i+\e) e_{n+1})\cap \{x_{n+1}=0\} \big) =\sqrt{(r_i-\e)^2-(h_i+\e)^2} \geq r_i-1 \,,
\end{equation}
which directly follows from the fact that $r_i\ge 3$, $h_i<1$ by \eqref{eq:li} and $\e<\frac{1}{2r_i}$ by \eqref{sliding eps}.



\medskip

{\bf Subcase II:} Assuming \eqref{caseI} does not hold,  the desired \eqref{sliding min dist} follows from the inequality
\begin{equation}\label{trig4}
{\rm radius }\Big( B_{r_i-\e}\big(-(h_i+\e) e_{n+1}\big)\cap \{x_{n+1}=0\} \Big)\geq {\rm radius }\Big(B_{r_i-1}\big((-h_i+1)e_{n+1}\big)\{x_{n+1}=0\} \Big)\,.  
\end{equation}
Using the Pythagorean theorem and squaring both sides, this is equivalent to 
$$
(r_i-\e)^2-(h_i+\e)^2\ge (r_i-1)^2-(-h_i+1)^2 \,
$$
which follows from the choice of $\e<\frac{r_i-h_i}{r_i+h_i}$ in \eqref{sliding eps}.


\bigskip

\textit{Step six.} In this step we will show that the angles with which the components of $\Omega_\infty$ intersect the plane $\{x_{n+1}=0\}$ must all be equal to $\theta_0$. More specifically, if we let $I'\subseteq I$ denote the subset of indices such that $B_1(y_i)\cap \{x_{n+1}=0\} \neq\emptyset$, and if for $i\in I'$ we let $\theta_i$ denote the angle with which $B_1(y_i)$ intersects $\{x_{n+1}=0\}$, then we claim that  $\theta_i=\theta_0$ for all $i\in I'$. As a byproduct of the proof, we will also show that $P(\Omega_l)\to P(\Omega_\infty)$.

We consider a test vector field $X\in C^\infty_c(\R^{n+1})$ such that $X\cdot e_{n+1}=0$ on $\{x_{n+1}=0\}$, for which it holds that
\begin{equation}
    \delta V_l[X]=\int_{M_l} H_l X\cdot \nu_{\Omega_l} \,d\H^n+\int_{\pa \Sigma_l}\Big(\sin(\theta_l)X\cdot\nu_{K_l}-\cos(\theta_l) X\cdot\nu_{\Sigma_l}\Big)\,d\H^{n-1} \,.
\end{equation}
Using the varifold convergence $V_l\weak V_\infty$, the convergence of both the mean curvature \eqref{hyp: conv mc} and angle \eqref{hyp: conv angle}, the convergence of $\nu_{K_l}\to e_{n+1}$ by \eqref{containers conv to half space}, and the uniform bound on $\H^{n-1}(\pa\Sigma_l)$ from Lemma~\ref{boundpasigma} , we can apply the divergence theorem twice to obtain that
\begin{equation}
    \delta V_\infty[X]=\lim_{l\to \infty}\delta V_l[X]=\lim_{l\to\infty} \left(n\int_{\Omega_l}\Div_{n+1} X\,dx -\cos(\theta_0)\int_{\Sigma_l} \Div_n X\;d\H^n \right) \,,
\end{equation}
which in turn implies by \eqref{conv in volume} and \eqref{conv in wetted} that
\begin{equation}
    \delta V_\infty[X]=n\int_{\Omega_\infty}\Div_{n+1} X\,dx -\cos(\theta_0)\int_{\Sigma_\infty} \Div_n X\,d\H^n \,.
\end{equation}
Since the boundary of $\Omega_\infty=\bigcup_{i\in I} B_1(y_i)\cap \{x_{n+1}>0\}$ has constant mean curvature equal to $n$, we can apply the divergence theorem again to obtain that
\begin{align}
    \delta V_\infty[X] 
    \;&=\; \sum_{i\in I}\int_{\partial B_1(y_i)\cap H}\Div_{\tau} X\,d\H^{n} \\
    \;&\hspace{.7cm}+\; \sum_{i\in I'}\cos(\theta_i)\int_{B_1(y_i)\cap \{x_{n+1=0}\}}\Div_n X\,d\H^n -\cos(\theta_0)\int_{\Sigma_\infty} \Div_n X\,d\H^n \,,\label{first var rep 1} \,.
\end{align}
At the same time, we claim that we can write $\delta V_\infty$ as
\begin{equation}\label{first var rep 2}
    \delta V_\infty[X] \;=\; \sum_{i\in I} \int_{\partial B_1(y_i)\cap H} \Div_\tau X d\H^n \,+\,\int_{\Omega_\infty\cap\{x_{n+1}=0\}\setminus \Sigma_\infty}\Div_n X\,\Xi_1\, d\H^n \,+\,\int_S \Div_n X\,\Xi_0\, d\H^n \,,
\end{equation}
for $S$ an $n$-rectifiable subset of the plane $\{x_{n+1}=0\}$ disjoint from $\Omega_\infty\cap\{x_{n+1}=0\}\setminus \Sigma_\infty$, and $\Xi_0$ and $\Xi_1$ integer-valued Borel functions such that $\Xi_0 = 0 \pmod{2}$ and $\Xi_1=1\pmod{2}$. Indeed, for $W_\infty$ defined as in Theorem~\ref{white's theorem}, we can use \eqref{white's thm representation} and {\it Step five} to write
\begin{equation}\label{whites theorem rep redux}
    V_\infty \;=\; W_\infty - d\H^n|_{\Sigma_\infty} \;=\; d\H^n|_{\pa\Omega_\infty\cap \{x_{n+1}>0\}} +\Xi_1 d\H^n|_{\Omega_\infty\cap \{x_{n+1}=0\}\setminus \Sigma_\infty} + \Xi_0d\H^n|_S \,,
\end{equation}
where $S\subseteq\{x_{n+1}=0\}$ is the set from Theorem \ref{white's theorem} enlarged to contain the points in $\Sigma_\infty$ with multiplicity greater than $1$. This proves \eqref{first var rep 2}.

\medskip

We now wish to show that $S$ and $\Omega_\infty\cap \{x_{n+1}=0\}\setminus \Sigma_\infty$ have $\H^n$-measure zero, which implies $P(\Omega_l)\to P(\Omega_\infty)$. 
To this end, comparing \eqref{first var rep 1} and \eqref{first var rep 2}, we may conclude that
\begin{equation}\label{identity}
\sum_{i\in I'} \cos(\theta_i)\chi_{ B_1(y_i)\cap \{x_{n+1}=0\}}-\cos(\theta_0)\chi_{\Sigma_\infty}=\Xi_0\chi_S+\Xi_1\chi_{\Omega_\infty\cap \{x_{n+1}=0\}\setminus \Sigma_\infty}\,.    
\end{equation}
Given $i\in I'$, if the set $B_1(y_i)\cap \{x_{n+1}=0\}\subseteq \Omega_\infty\cap \{x_{n+1}=0\}$ is not contained in $\Sigma_\infty$, then by \eqref{identity} we have that $\Xi_1=\cos(\theta_i)$ on $B_1(y_i)\cap \{x_{n+1}=0\}$. The condition $\Xi_1=1 \pmod{2}$ implies that either $\cos(\theta_i)=1$ or $\theta_i=0$, both of which imply that $\H^n(B_1(y_i)\cap \{x_{n+1}=0\})=0$. Hence, for all $i\in I'$ we must have $B_1(y_i)\cap \{x_{n+1}=0\}\subseteq \Omega_\infty$,  which implies that $\H^n(\Omega_\infty\cap \{x_{n+1}=0\}\setminus \Sigma_\infty)=0$. To show $H^n(S)=0$, we notice that by \eqref{identity} and the previous argument the inclusion $S\subseteq \Sigma_\infty$ holds and, moreover, that on each connected component
$$
\Xi_0\;=\;\cos(\theta_i)-\cos(\theta_0)\;=\;0\pmod{2},
$$
which can only be satisfied if $\theta_i=\theta_0$ for every $i\in I$ and $\Xi_0=0$ vanishes everywhere.

\bigskip

\textit{Step seven.} We claim that $\H^{n-1}|\partial \Sigma_l \weak \H^{n-1}|\partial \Sigma_\infty$. Let $\phi\in C^\infty(\{x_{n+1}=0\})$, and, abusing notation, extend $\phi$ to $\R^{n+1}$ by $\phi(x)=\phi(x_1,\hdots,x_n,0)$, so that $\Div(\phi e_{n+1})=0$. Then, by the convergence in mean curvature and angle it follows that
\begin{align}
    \lim_{l\to\infty} & \int_{M_l}\Div_\tau (\phi e_{n+1}) \\
        \;&=\; \lim_{l\to\infty} \left( \int_{M_l} H_l \phi e_{n+1}\cdot\nu + \int_{\bd(M_l)} \Big(\sin(\theta_h)e_{n+1}\cdot\nu_{K_l} - \cos(\theta_l)e_{n+1}\cdot\nu_{\Sigma_l} \Big) \phi \right) \\
        \;&=\; \lim_{l\to\infty} \left( n\int_{M_l} \phi e_{n+1}\cdot\nu + \sin\theta_0\int_{\bd(M_l)} \phi \right) \\
        \;&=\; \lim_{h\to\infty} \left( n\int_{\Omega_l} \Div(\phi e_{n+1}) - n \int_{\Sigma_l} \phi e_{n+1}\cdot\nu_{K_l} + \sin\theta_0\int_{\bd(M_l)} \phi \right) \\
        \;&=\; \lim_{l\to\infty} \left( - n \int_{\Sigma_l} \phi e_{n+1}\cdot\nu_{K_l} + \sin\theta_0\int_{\bd(M_l)} \phi \right) \,,\label{temp limit}
\end{align}
while at the same time by the varifold convergence \eqref{varifold conv}
\begin{align}
    \lim_{l\to\infty}\int_{M_l}\Div_\tau (\phi e_{n+1}) 
        &= \sum_{i\in I} \int_{\partial B_1(y_i)\cap H} \Div_\tau (\phi e_{n+1})  \\
        &= -n\sum_{i\in I} \int_{B_1(y_i)\cap \{x_{n+1}=0\}} \phi + \sum_{i \in I'} \sin\theta_0 \int_{ B_1(y_i)\cap\{x_{n+1}=0\} } \phi \,. \quad \label{temp limit 2}
\end{align}
By \eqref{containers conv to half space} and the previous step,  first term of \eqref{temp limit} has a well-defined limit given by
\begin{equation}\label{temp limit 3}
    \lim_{l\to\infty} \int_{\Sigma_l} \phi e_{n+1}\cdot\nu_{K_l} \;=\; \int_{\Sigma_\infty} \phi \;=\; \sum_{i\in I'} \int_{B_1(y_i)\cap \{x_{n+1}=0\}} \phi \,,
\end{equation}
and from this it follows that the term $\int_{\bd(M_l)} \phi$ must too have a well-defined limit. Combining \eqref{temp limit}, \eqref{temp limit 2}, and \eqref{temp limit 3}, we find that
\begin{equation}
    \lim_{l\to\infty} \int_{\bd(M_l)} \phi \;=\;  \int_{\bd(M_\infty)} \phi \,,
\end{equation}
as claimed. This concludes the proof of the theorem.
\end{proof}

\section{Convergence of local minimizers}\label{min}
\begin{proof}[Proof of Theorem~\ref{thm:locmin}]
     We argue by contradiction. Assume that $\{\Omega_j\}_{j\in\N}$ is a sequence of smooth VCLMs in the hydrophilic regime with diameter $\e_0$ and volume $|\Omega_j|=m_j\to 0$ satisfying
 \begin{itemize}
    \item $\Omega_j$ is connected,
    \item $P(\Omega_j;K)\le C m_j^{\frac{n}{n+1}}$,
    \item $\Omega_j\subseteq Cm_j^{\frac{1}{n+1}}B_{1}(x_j)$ for some $x_j\in\pa K$,
    \item $\H^n(\Sigma_j)>C^{-1}m_j^{\frac{n}{n+1}}$,
 \end{itemize}
 and such that for any angle $\theta$
 \begin{equation}\label{eq:aa}
    \lim_{j\to0^+}\inf_{x,Q,\theta_*} m^{-1}|\Omega_j\triangle (m^{\frac{1}{n+1}}(x_0+Q[B_{\theta}]))|>0,
\end{equation}
where $Q[B_\theta]$ is a rigid transformation of a ball that intersects the $\{x_{n+1}=0\}$ with angle $\theta$.
 
By the compactness of the container, up to a subsequence we can assume that $x_j\to x_0\in\pa K$. Then by re-centering and re-scaling, we have that (up to a rotation) the sequence
$$
K_j:=\frac{K-x_0}{m_j}\;\to\; \{x_{n+1}>0\}.
$$ 
Moreover, using that the boundary of $K$ is $C^2$, we have that the re-scalings $K_j$ converge to the half-space in the sense of \eqref{containers conv to half space}, which is of course is one of the hypotheses of Theorem~\ref{mainthm}.
 
To analogously re-scale and re-center our sets, we define
$$
    \tilde{E}_j=\frac{E_j-x_0}{m_j} \,,
$$
which now satisfy
 \begin{itemize}
    \item $\tilde{\Omega}_j$ is connected,
    \item $P(\tilde{\Omega}_j;K_j)\le C$,
    \item $\tilde{\Omega}_j\subseteq B_{2C}(0)$,
    \item $\H^n(\tilde{\Sigma}_j)>C^{-1}$.
 \end{itemize}
Moreover, using the local minimality condition (properly re-scaled), we can derive the mean curvature equation
 $$
    H_{\pa\tilde{\Omega}_j}+m^{\frac{1}{n+1}}g=\lambda_j\qquad\mbox{on $\pa \tilde{E}_j$}
 $$
 and Young's law 
 $$
    \nu_{\tilde{\Omega}_j}(y)\cdot \nu_{K^j}(y)=\sigma(m_j y+x_0)\qquad\mbox{on $\tilde{\Sigma}_j\subset\pa K_j$}.
 $$
 Hence, we have that
\begin{equation}\label{deficits}
 \|H_{\pa\tilde{\Omega}_j}-\lambda_j\|_{C^0(\pa\tilde{\Omega}_j)}\le m^{\frac{1}{n+1}}\|g\|_{\infty}\qquad\mbox{and}\qquad\|\theta(x)-\theta_0\|_{C^0(\tilde{\Sigma}_j)}\le C(\theta_0)\|\sigma\|_{Lip(\pa K)}m_j,   
\end{equation}
 where we have used the Lipschitz regularity of the adhesion coefficient in the second inequality.
 
 By Proposition~\ref{lem:deficitconvergence2} and Lemma~\ref{boundpasigma}, we can control the perimeter of the wetted region
 \begin{align}
     \H^n(\pa\tilde{\Sigma}_j)\le C(\theta_0) \lambda_j\left( \H^n(\tilde{\Sigma}_j)+m_j^{\frac{1}{n+1}}\|g\|_{C^0}P(\pa\tilde{\Omega}_j;K_j)\right).
 \end{align}
 Moreover, using that $|\tilde{\Omega}_j|=1$ we can identify the limit of the Lagrange multiplier
 \begin{align}
 &\left|(n+1)\lambda_j+n\cos\theta_0\H^n(\tilde{\Sigma}_j)-n P(\pa\tilde{\Omega}_j;K_j) \right|\\
 &\qquad \le C(\theta_0)d_{\tilde{\Omega}_j}(\e+ \|\theta(x)-\theta_0\|_{C^0(\tilde{\Sigma}_j)})\H^n(\tilde{\Sigma}_j) \lambda_j + m_j^{\frac{1}{n+1}}\|g\|_{C^0}d_{\tilde{\Omega}_j} P(\pa\tilde{\Omega}_j;K_j) \,,
 \end{align}
where $C(\theta_0)$ is the constant from  \eqref{lambdabound} depending on $\theta_0$ and $\e>0$ is the parameter in \textbf{Almost Flatness} which can be taken as small as we like (for $j$ large enough). Using upper and lower bounds on perimeter, we can conclude that, up to a subsequence, there exists $\lambda_0>0$ such that
$$
    \lim_{j\to\infty}\lambda_j=\lambda_0>0 \,,
$$
and hence \eqref{deficits} may be recast as
\begin{equation}\label{deficits2}
 \|H_{\pa\tilde{\Omega}_j}-\lambda_0\|_{C^0(\pa\tilde{\Omega}_j)}\to 0\qquad\mbox{and}\qquad\|\theta(x)-\theta_0\|_{C^0(\tilde{\partial\Sigma}_j)}\to 0.
\end{equation}

The desired convergence will follow from applying Theorem~\ref{mainthm} to the sequence of sets $\{\tilde{\Omega}_j\}$; however, we still need to check the technical condition on the convergence of perimeter. To show this, we will exploit the local minimality property and apply \cite[Theorem 2.9]{dephilippismaggiCAP-ARMA} to show that the perimeter of $\tilde{E}_j$ converges strongly in the limit $j\to\infty$.

First, as the hypotheses of \cite[Theorem 2.9]{dephilippismaggiCAP-ARMA} require the container to be exactly the half space, we need to rectify the boundary. For a small enough radius $r>0$, we can consider the inverse mapping that locally rectifies the boundary of $K$ around $x_0$; more specifically, we take the $C^1$-diffeomorphism $f:\{x_{n+1}>0\}\cap B_{r}\to K\cap B_{r}(x_0)$ that, up to rotation, satisfies
    $$
        \nabla f(x)=\id+O(|x|) \,.
    $$
    We next scale $f$ by the sequence of length scales $m_j$ to obtain the sequence of mappings
    $$
    f_j(x)=m_j^{-\frac{1}{n+1}}\Big(f\big(m_j^{\frac{1}{n+1}}x\big)-x_0\Big)\,,
    $$ 
    which are defined
    \begin{equation}
        f_j\,:\,m_j^{-\frac{1}{n+1}}\Big(\{x_{n+1}>0\}\cap B_{r}(x_0)\Big)\;\to\; m_{j}^{-\frac{1}{n+1}}\Big(K\cap B_{r}-x_0\Big) \,.
    \end{equation}
    Note that for $j$ large enough each $f_j$ is a small deformation of the identity on compact sets, i.e.
    $$
        \nabla f_j(x)=\id+O\big(m_j^{\frac{1}{n+1}}|x|\big) \,.
    $$
    For $j$ large enough, one has the inclusion $\tilde{E_j}\subset m_{j}^{-\frac{1}{n+1}}(K\cap B_{r}-x_0)$, and hence we consider
    $$
    G_j:=f_j^{-1}(\tilde{\Omega}_j),
    $$
    which satisfies $G_j\subseteq\{x_{n+1}>0\}\cap B_{3C}$. 
    Translating the local minimality property of $\Omega_j$ to these new sets, we have that for $j$ large enough the set $G_j$ is a local minimizer of the following anisotropic perimeter functional for variations of diameter less than $\e_0/2$:
    \begin{align}
     &\tilde{\mathcal{F}}[G]=\int_{\pa^* G} |\cof(\nabla f_j(y))\nu_F(y)| d\H^n(y)+\int_{G\cap \partial H} \sigma(f_j(z))J^{\pa H}f_j(z) \;d\H^n(z)\\
     &\qquad\qquad+m_j^{\frac{1}{n+1}}\int_{G} g(f_j(x))Jf_j(x)\;dx \, ,\label{anisotropic}
    \end{align}
    where the $\cof(\nabla f_j(y))$ is the co-factor matrix of $\nabla f_j(y)$. 
    
Next, to apply the compactness result \cite[Theorem 2.9]{dephilippismaggiCAP-ARMA}, we need to transform the wetting energy into a surface integral. Following the idea in \cite[Lemma 6.1]{dephilippismaggiCAP-ARMA}, we consider the vector field
    $$
        T_j(x)=\sigma(f_j(\hat{x}))J^{\pa H}f_j(\hat{x})e_{n+1} \,,
    $$
    where $\hat{x}$ denotes the projection onto the boundary of the half-space $\{x_{n+1}=0\}$, and we notice that for a set of finite perimeter $F\subseteq\dom( f_j)$ we have by the divergence theorem
    $$
        \int_{F\cap \partial H} \sigma(f_j(z))J^{\pa H}f_j(z) \;d\H^n(z)=-\int_{\pa^*F\cap H} T_j(y)\cdot\nu_F(y)\;d\H^n(y)+\int_{F\cap H}\Div\,T_j(x)\;dx \,.
    $$
    Using that $G_j$ is a VCLM of \eqref{anisotropic}, we obtain
    \begin{align*}
        &\int_{\pa^* G_j} |\cof(\nabla f_j(y))\nu_{G_j}(y)|-T_j(y)\cdot\nu_{G_j}(y)\ d\H^n(y)\\
        &\qquad\le \int_{\pa^* G} |\cof(\nabla f_j(y))\nu_{G}(y)|-T_j(y)\cdot\nu_{G}(y)\ d\H^n(y) +(\sup|\Div\, T_j|+m_j^{\frac{1}{n+1}}\sup g)|G_j\triangle G|
    \end{align*}
    for any set $G$ satisfying $\diam(G\triangle G_j)<\e_0/2$. Therefore, $G_j$ is an almost minimizer of the pure anisotropic perimeter with elliptic integrand
    $$
        \Phi_j(y,\nu)=|\cof(\nabla f_j(y))\nu|-T_j(y)\cdot \nu \,.
    $$
    For $j$ large enough $\Phi_j$ are convex, positive one-homogeneous, and satisfy the requirements of \cite[Theorem 2.9]{dephilippismaggiCAP-ARMA}.
    
    Therefore, there exists a limiting set $G_0$ such that $G_j\to G_0$ in $L^1$ and such that on $\{x_{n+1}>0\}$ the norm of the Gauss-Green measures converge $\mu_{G_j}\to\mu_{G_0}$. Using that up to a rigid motion $f_j$ converges in $C^1$ to the identity map on compact sets, we obtain that the original scaled set also converges $\tilde{\Omega}_j \to G_0$ in $L^1$ and $P(\tilde{\Omega}_j;K_j)\to P(G_0;\{x_{n+1}>0\})$ in perimeter. Therefore, $G_0$ is a union of balls of equal radius intersecting with constant angle $\theta_0$. By the local minimality of $\tilde{E}_j$, we obtain that $G_0$ is also a local minima of the Gauss energy $\F_{K,\sigma,0}$, where $K=\{x_{n+1}>0\}$ and the adhesion coefficient $\sigma=\cos\theta_0$ is constant. Lemma~\ref{lem:singleball} below implies that $G_0$ is a single ball that intersects the hyperspace $\{x_{n+1}=0\}$ with angle $\theta_0$, which contradicts \eqref{eq:aa}. This concludes the proof of the theorem.
    \end{proof}
    
    \bigskip
    
    \begin{lemma}\label{lem:singleball}
    Consider the case when $K=\{x_{n+1}>0\}$ and the adhesion coefficient $\sigma$ is constant. Then, within the class of unions of tangent balls, volume constrained local minimizers of $\mathcal{F}_{K,\sigma,0}$ are a single ball.
    \end{lemma}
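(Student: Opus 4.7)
I would argue by contradiction: assume $\Omega$ is a union of $N \geq 2$ tangent balls and a VCLM of $\F_{K,\sigma,0}$, and exhibit a strictly lower-energy competitor whose symmetric difference with $\Omega$ has admissibly small diameter. The key observation is a scaling calculation: by the Euler--Lagrange equations (constant mean curvature inside $K$ and Young's law on $\pa K$), every ball in $\Omega$ has the same radius $r = n/\lambda$ and, wherever it touches the substrate, the common contact angle $\theta_0 = \arccos\sigma$. One then computes explicit positive constants $c_V(\theta_0), c_F(\theta_0)$ such that $|\Omega| = N c_V r^{n+1}$ and $\F_{K,\sigma,0}(\Omega) = N c_F r^n$; eliminating $r$ via the volume constraint $|\Omega| = m$ gives $\F_{K,\sigma,0}(\Omega) = c_F c_V^{-n/(n+1)} m^{n/(n+1)} N^{1/(n+1)}$, which is strictly increasing in $N$. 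In particular, collapsing $\Omega$ to a single ball of the same total volume strictly decreases the energy.

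To convert this monotonicity into a contradiction with VCLM, I would construct a local competitor by transferring a small amount of volume between two tangent balls. Fix two tangent balls $B_1, B_2$ in $\Omega$ and let $\Omega_t$ be the configuration in which $B_1 \cap H$ is replaced by a ball of volume $(1-t)|B_1 \cap H|$ and $B_2 \cap H$ by a ball of volume $|B_2 \cap H| + t|B_1 \cap H|$, each still meeting $\{x_{n+1}=0\}$ at angle $\theta_0$ and positioned near the original locations, while the other $N-2$ balls are left unchanged. By the strict concavity of $s \mapsto s^{n/(n+1)}$, the function $t \mapsto \F_{K,\sigma,0}(\Omega_t)$ is strictly decreasing on $(0,1]$. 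The symmetric difference $\Omega_t \triangle \Omega$ is contained in $B_1 \cup B_2 \cup B_1(t) \cup B_2(t)$, and therefore has diameter bounded by a constant multiple of $r = O(m^{1/(n+1)})$, which is within the VCLM admissibility range for the $\e_0$ arising in the application to Theorem~\ref{thm:locmin}.

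The main obstacle is ensuring that the competitor's symmetric-difference diameter fits inside the VCLM threshold $\e_0\, m^{1/(n+1)}$. Since the volume-transfer competitor above has symmetric difference of diameter $\Theta(r)$, admissibility requires $\e_0$ to exceed a geometric constant depending only on $\theta_0$ and $n$; this is automatic in the intended application, where $\e_0$ is a fixed positive constant while $m \to 0^+$, so that the re-scaled configuration always has enough room in the diameter budget. If a statement uniform in $\e_0 > 0$ is desired, one can instead invoke the unstable Jacobi mode on the $N$-ball configuration: the volume-conserving direction $(c,-c,0,\ldots,0)$ acting on the ball sizes produces strictly negative second variation of $\F_{K,\sigma,0}$, because each isolated ball's Jacobi operator has a $k=0$ eigenvalue $-n/r^2$ that is only killed by the global volume constraint. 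This shows that $\Omega$ with $N \geq 2$ cannot be a local minimizer in any reasonable sense, completing the argument.
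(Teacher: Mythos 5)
Your scaling computation (energy of $N$ equal balls $\propto N^{1/(n+1)}m^{n/(n+1)}$, increasing in $N$) is correct, but the argument has a genuine gap at the step where you convert it into a contradiction with volume-constrained \emph{local} minimality: both of your competitors are global. The volume-transfer configuration $\Omega_t$ differs from $\Omega$ on essentially all of $B_1\cup B_2$, so $\diam(\Omega_t\triangle\Omega)=\Theta(r)$; after rescaling to unit volume, $r$ is a dimensional constant of order one, while the admissible diameter budget is exactly $\e_0$, which in Theorem~\ref{thm:locmin} is an \emph{arbitrary} fixed positive constant (possibly much smaller than $r$). Your claim that admissibility ``is automatic in the intended application'' is therefore incorrect: the lemma must hold for every $\e_0>0$, and for small $\e_0$ your competitor is inadmissible. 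The fallback you offer does not repair this, because the unstable Jacobi mode $(c,-c,0,\dots,0)$ is likewise a perturbation supported on the entire surface of two balls; it proves instability under global volume-preserving variations, not under variations confined to a set of diameter $\e_0$. Away from the tangency points a sphere is locally a smooth constant-mean-curvature critical point, so the only place a genuinely local energy-decreasing competitor can live is at a tangency.

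That is exactly how the paper argues, and it is the idea missing from your proposal. For a tangency point in the open half-space, upper and lower density estimates for almost-minimizers (the boundary has density $2$ at the touching point) already give a contradiction. For a tangency point on $\{x_{n+1}=0\}$, one blows up at that point: the blow-up of two balls meeting the plane at angle $\theta_0=\arccos\sigma$ and tangent to each other there is the wedge $\tilde G=\{0<x_{n+1}<|x_1|/|\sigma|\}$, which inherits local minimality at every scale. One then beats the wedge with an explicit competitor supported in a small prism $S=\{|x_i|\le l,\ 2\le i\le n\}$, replacing the two tilted faces by a flat cap of height $r$; the wetted region is unchanged and the perimeter deficit is $2l^{n-1}r(\sqrt{1+\sigma^2}-|\sigma|)-2|\sigma|(n-1)l^{n-2}r^2>0$ for $r\ll l$. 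Because this competitor is supported in an arbitrarily small neighborhood of the tangency point, it is admissible for any $\e_0>0$. To repair your proof you would need to replace both the volume-transfer and the Jacobi-mode steps with a localized construction of this kind.
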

    \begin{proof}
    Suppose for sake of contradiction that $G$ is the union of at least two tangent balls. Then, we first note the Euler-Lagrange equations implies that each of them have correct associated contact angle with the plane $\{x_{n+1}=0\}$. 
    
   If the point of tangency of any two balls occurs in the interior $\{x_{n+1}>0\}$, then by classical upper and lower density arguments, see for instance \cite[Lemma 2.8]{dephilippismaggiCAP-ARMA}, yield the desired contradiction. Next, we assume that there exists a point of tangency $x_0$ lying in the plane $\{x_{n+1}=0\}$. Blowing up this configuration at $x_0$, we obtain  (up to a rotation) the wedge
    $$
    \tilde{G}=\left\{x\;:\; 0<x_{n+1}<\frac{|x_{1}|}{|\sigma|} \right\}.
    $$
    Using the local minimality property of $G$, we can deduce that $\tilde{G}$ must also be a local minimizer of the functional
    $$
    \mathcal{F}(E)=P(E;\{x_{n+1}>0\})+\sigma \H^n(E\cap \{x_{n+1}=0\}).
    $$
    We will achieve the desired contradiction by constructing an appropriate competitor.
    
    For a parameter $l>0$ to be determined later, we define the prism set 
    \begin{equation}
        S=\left\{x:|x_i|\le l\;\mbox{for $2\le i\le n$}\right\}\,.
    \end{equation}
    On $S$, we alter the wedge $\tilde{G}$ to obtain a new set $\tilde{F}$ in the following way:
    $$
    \tilde{F}:=\tilde{G}\cap S^c\; \bigcup\;
    \left\{x\;:\; 0<x_{n+1}<r\; \mbox{for $x \in S$ and $|x_1|<|\sigma|r$}  \right\}.
    $$
    We first notice that the wetted regions for $\tilde{G}$ and $\tilde{F}$ are the same; hence to contradict the minimality of $\tilde{G}$ we only need to compute their difference in perimeter. Due to the simple geometry of the competitor, we can compute explicitly
    $$
    \mathcal{F}(\tilde{G})-\mathcal{F}(\tilde{F}) \;=\; 2l^{n-1} r (\sqrt{1+\sigma^2}-|\sigma|)-2|\sigma|(n-1)l^{n-2} r^2
    $$
    By using that $\sqrt{1+\sigma^2}-|\sigma|>0$ for $|\sigma|<1$ and by taking $r\ll l$ and $r$ small enough, we obtain that
    $$
    \mathcal{F}(\tilde{G})-\mathcal{F}(\tilde{F})>0,
    $$
    which contradicts the local minimality property for $\tilde{G}$. This concludes the proof of the lemma.
    \end{proof}

\section{Proof of Theorem~\ref{thm hk sub}}\label{sec:hkideal}

The next lemma contains the core of the Montiel-Ros argument for proving the classical Heintze-Karcher inequality. The argument is detailed for the sake of clarity.

\begin{lemma}\label{lemma montiel ros argument}
  If $N$ is an orientable hypersurface with boundary in $\R^{n+1}$ with finite area and positive mean curvature $H_N$ with respect to the normal vector field $\nu_N$, then
  \begin{equation}
    \label{hk montielros N}
    (n+1)|\psi(\Gamma)|\le\int_N\,\frac{n}{H_N}
  \end{equation}
  where $\psi(x,t)=x-t\,\nu_N(x)$ for every $(x,t)\in N\times\R$,
  \[
  \Gamma=\Big\{(x,t)\in N\times\R:0<t\le\frac1{\k_n(x)}\Big\}\,,
  \]
  and $\k_1\le\k_2\le\cdots\le\k_n$ denote the principal curvatures of $N$.
  
  Moreover, denoting by $\{N_i\}_{i=1}^m$ the connected components of $N$, we see that equality holds in \eqref{hk montielros N} if and only if each $N_i$ is contained in a sphere $\pa B_r(x_i)$ and the cones over $N_i$ with vertex at $x_i$ are mutually disjoint. 
\end{lemma}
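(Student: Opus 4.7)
The plan is to estimate $|\psi(\Gamma)|$ through the area formula applied to the parallel-surface map $\psi(x,t)=x-t\,\nu_N(x)$ on $\Gamma\subseteq N\times\R$, and then combine it with the arithmetic--geometric mean inequality on the principal curvatures. The first step is to compute the Jacobian: choosing an orthonormal frame of principal directions at $x\in N$ and using that $\pa_t\psi=-\nu_N(x)$ is a unit vector orthogonal to $T_xN$, one obtains
\[
|J\psi(x,t)|=\prod_{i=1}^{n}|1-t\kappa_i(x)|.
\]
For $(x,t)\in\Gamma$ the constraint $t\le 1/\kappa_n(x)$ (which in particular requires $\kappa_n>0$, itself forced by $H_N>0$) gives $t\kappa_i(x)\le 1$ for every $i$, so all factors are non-negative and the absolute-value bars may be dropped.

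Next, AM--GM applied to the non-negative numbers $1-t\kappa_i(x)$, together with $\sum_i\kappa_i=H_N$, yields $\prod_i(1-t\kappa_i(x))\le\bigl(1-tH_N(x)/n\bigr)^{n}$. Since $\kappa_n\ge H_N/n$, one has $1/\kappa_n(x)\le n/H_N(x)$, and since the majorant is non-negative on $[0,n/H_N]$ the $t$-interval can be enlarged for free:
\[
\int_{0}^{1/\kappa_n(x)}\prod_{i=1}^{n}(1-t\kappa_i)\,dt\;\le\;\int_{0}^{n/H_N(x)}\Bigl(1-\tfrac{tH_N(x)}{n}\Bigr)^{n}dt\;=\;\frac{n}{(n+1)H_N(x)},
\]
the last equality being a routine substitution. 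Combining $|\psi(\Gamma)|\le\int_\Gamma|J\psi|$ with Fubini then gives \eqref{hk montielros N}.

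For the rigidity statement, three inequalities must be simultaneously sharp: the area formula (which requires $\psi|_\Gamma$ to have multiplicity one almost everywhere on its image), AM--GM (which forces $\kappa_1(x)=\cdots=\kappa_n(x)$ for every $x$, i.e.\ total umbilicity of $N$), and the enlargement of the $t$-range from $1/\kappa_n$ to $n/H_N$ (which forces $\kappa_n=H_N/n$, automatic once AM--GM is sharp). The classical fact that a connected totally umbilic $C^2$ hypersurface in $\R^{n+1}$ with $H_N>0$ lies on a sphere then forces every component $N_i$ of $N$ to be contained in a sphere $\pa B_{r_i}(x_i)$ with $r_i=n/H_N$. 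A direct computation using $\nu_N(y)=(y-x_i)/r_i$ then identifies $\psi(N_i\times(0,r_i])$ with the open cone over $N_i$ with vertex $x_i$ (punctured at the vertex, which is a null set), so the multiplicity-one condition reduces precisely to these cones being pairwise disjoint.

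The inequality itself is essentially a one-line calculation once the Jacobian has been written down; the main obstacle I anticipate is the bookkeeping in the equality case, namely, correctly translating the multiplicity-one condition of the area formula into pairwise disjointness of the geometric cones. In particular one must verify that the apex and boundary of each cone contribute negligibly to the $\H^{n+1}$-measure, and that the presence of $\bd(N)$ (so that $N_i$ may be only a spherical cap, not a full sphere) does not interfere with identifying $\psi(N_i\times(0,r_i])$ as an open cone minus its vertex.
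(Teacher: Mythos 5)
Your proposal is correct and follows essentially the same route as the paper's proof: the tangential Jacobian computation, AM--GM on the factors $1-t\kappa_i$, enlargement of the $t$-interval using $\kappa_n\ge H_N/n$, and the area/coarea (Fubini) formula, with the equality case reduced to umbilicity plus injectivity of $\psi$ on $\Gamma$, i.e.\ disjointness of the cones. The only cosmetic difference is that you invoke Fubini on $N\times\R$ where the paper uses the coarea formula, which is immaterial here.
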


\begin{remark}\label{remark montielros}
  {\rm When $N=\pa\Om$ we are in the case considered by Montiel and Ros \cite{montielros}, who complete the proof of \eqref{hk inq} by showing that $\Om\subseteq\psi(\Gamma)$. Indeed, pick $y\in\Om$ and let $x_0\in\pa\Om$ such that $|x_0-y|=\dist(x_0,\pa\Om)$. Then $\nu_\Om(x_0)=(x_0-y)/|x_0-y|$ and thus $\k_n(x_0)\ge|x_0-y|^{-1}$. In particular, $y=\psi(x_0,|x_0-y|)$ for $|x_0-y|\le1/\k_n(x)$, and thus $\Om\subseteq\psi(\Gamma)$.}
\end{remark}

\begin{proof}[Proof of Lemma \ref{lemma montiel ros argument}]
  The tangential Jacobian of $\psi$ along $N\times\R\subseteq\R^{n+2}$ is given by
  \[
  J\psi(x,t)=\prod_{i=1}^n\big(1-t\,\k_i(x)\big)\qquad\forall (x,t)\in N\times\R\,.
  \]
  If $t\le1/\k_n(x)$, then each factor in this formula is non-negative, and thus one can apply the arithmetic-geometric mean inequality to infer that
  \[
  J\psi(x,t)\le\Big(\frac1n\sum_{i=1}^n(1-t\,\k_i(x))\Big)^n=\Big(1-t\,\frac{H_N(x)}n\Big)^n\,,\qquad\forall (x,t)\in\Gamma\,.
  \] 
  By the area formula we thus get
  \[
  |\psi(\Gamma)|\le\int_{\psi(\Gamma)}\H^0(\psi^{-1}(y))\,dy=\int_{\Gamma}J\psi\,d\H^{n+1}\le \int_{\Gamma}\Big(1-t\,\frac{H_N}n\Big)^n\,d\H^{n+1}
  \]
  where $\H^{n+1}$ is the $(n+1)$-dimensional measure in $\R^{n+2}$. By the coarea formula and since $H_N/n\le \k_n$,
  \begin{eqnarray*}
  \int_{\Gamma}\Big(1-t\,\frac{H_N}n\Big)^n\,d\H^{n+1}
  &=&\int_Nd\H^n\int_0^{1/\k_n(x)}\Big(1-t\,\frac{H_N}n\Big)^n
  \\
  &\le&\int_Nd\H^n\int_0^{n/H_N}\Big(1-t\,\frac{H_N}n\Big)^n
  \\
  &=&\frac1{n+1}\int_N\,\frac{n}{H_N}\,.  
  \end{eqnarray*}
  This proves \eqref{hk montielros N}. If equality holds, then (i) $N$ is umbilical, in the sense that $\k_i(x)=H_N(x)/n$ for every $x\in M$ and $i=1,...,n$; (ii) $\psi$ is injective on $\Gamma$. By (i), each connected component $N_i$ of $N$ is either contained in a sphere or in a hyperplane, the latter case being actually ruled out by $H_N>0$. Denote by $x_i$ the center of the sphere containing $N_i$, and by $K_i$ the cone with vertex $x_i$ spanned by $N_i$. By (ii), the cones $K_i$ are mutually disjoint.
\end{proof}
Now we are ready to prove Theorem~\ref{thm hk sub}.
\begin{proof}
  [Proof of Theorem \ref{thm hk sub}] We apply Lemma \ref{lemma montiel ros argument} with $N=M$ and find
  \begin{equation}
    \label{pt1 1}
  (n+1)|\psi(\Gamma)|\le\int_M\frac{n}{H_M}
  \end{equation}
  where $\psi(x,t)=x-t\,\nu_\Om(x)$, $\Gamma=\{(x,t):x\in M\,,0<t\le1/\k_n(x)\}$ and $\k_1\le\cdots\le\k_n$ are the principal curvatures of $M$. We notice that
  \begin{equation}
    \label{pt1 2}
      A\subseteq\psi(\Gamma)
  \end{equation}
  if $A$ denotes the set of points $y\in\R^{n+1}$ such that
  \begin{equation}
    \label{pt1 def A}
      \mbox{$\dist(y,M)=|x-y|$ for some $x\in M\cap H$ with $\frac{x-y}{|x-y|}=\nu_\Om(x)$}\,.
  \end{equation}
  Indeed, this is the same argument outlined in Remark \ref{remark montielros}: if $y\in A$ then the sphere $\pa B_{|x-y|}(y)$ is touching $M$ at $x$ from the inside of $\Om$. Hence
  \[
  \frac1{|x-y|}\ge\k_n(x)
  \]
  which in turn implies $y=\psi(x,|x-y|)$ for $(x,|x-y|)\in\Gamma$.
  
  \medskip
  
  \noindent {\it Step one}: We show that if $y\in\R^{n+1}$ is such that $\dist(y,M)=|x_0-y|$ for some $x_0\in\bd(M)$, then
  \begin{equation}
    \label{pt1 3}
      \frac{x_0-y}{|x_0-y|}=\cos\a\,\nu_H+\sin\a\,\nu_\Si(x_0)\qquad\sin(\theta(x_0)-\a)\le0\,,
  \end{equation}
  
  Indeed, by differentiating $x\mapsto|x-y|^2$ at $x_0$ along a direction $v\in T_{x_0}(\pa\Si)$ we find that
  \[
  (x_0-y)\cdot v=0\qquad \forall v\in T_{x_0}(\pa\Si)=\nu_\Si(x_0)^\perp\cap\nu_H^\perp\,.
  \]
  This implies that $x_0-y$ lies in the plane generated by $\nu_H$ and $\nu_\Sigma(x_0)$, thus the first relation in \eqref{pt1 3}. Next, by differentiating $x\mapsto|x-y|^2$ at $x_0$ in the direction $-\nu_{\bd(M)}^M(x_0)$ (which points inwards $M$) we find that
  \[
  (x_0-y)\cdot\big(-\nu_{\bd(M)}^M(x_0)\big)\ge0
  \]
  which thanks to \eqref{nuMbdM} and the first relation in \eqref{pt1 3} gives
  \[
  0\le -\cos\a\sin\theta(x_0)+\sin\a\cos(\theta(x_0))=\sin(\a-\theta(x_0))\,.
  \]
  This proves \eqref{pt1 3}.

  \medskip
  
  \noindent {\it Step two}: We assume to be in the hydrophobic case where $0<\theta_{min}\le\theta_{max}\le\pi/2$. Setting $\Lambda$ as in \eqref{lambda hydrophobic}, it will suffice to prove
  \begin{equation}
    \label{pt1 inclusion hydrophobic}
  \Om\setminus\ov{\Lambda}\subseteq A\,.
  \end{equation}
  Indeed, by combining \eqref{pt1 1} and \eqref{pt1 2} with \eqref{pt1 inclusion hydrophobic} we find
  \[
  \frac1{n+1}\int_M\frac{n}{H_M}\ge|\psi(\Gamma)|\ge|A|\ge|\Om\setminus\Lambda|\ge|\Om|-|\Lambda|\,,
  \]
  that is the hydrophobic case of \eqref{hk inq sub proof}.
  
  We now prove \eqref{pt1 inclusion hydrophobic}. Let $y\in\Om\setminus\ov{\Lambda}$, and let $x_0\in M$ be such that $|x_0-y|=\dist(y,M)$. If $x_0\in M\cap H$, then according to \eqref{pt1 def A} we immediately find $y\in A$, as desired. We conclude by showing that if $x_0\in\bd(M)$, then $y\in\ov{\Lambda}$. Indeed, if this is the case, then \eqref{pt1 3} holds. Since $y_{n+1}>0$, in \eqref{pt1 3} we have $\a\in(-\pi/2,\pi/2)$. By  $\theta(x_0)\in(0,\pi/2]$, we have $\theta(x_0)-\a\in(-\pi/2,\pi)$, so that $\sin(\theta(x_0)-\a)\le0$ implies
  \[
  -\frac{\pi}2< \theta(x_0)-\a\le0\,.
  \]
  Summarizing,
  \begin{equation}
    \label{pt1 alfa larger thetamin}
      0<\theta_{min}\le\a<\frac{\pi}2\,.
  \end{equation}
  Denoting $y=(z,t)$ and multiplying \eqref{pt1 3} by $x_0-y$, we obtain
  $$
    0\le |x_0-y|\le -\cos \a t+\sin \a |z-x_0|
  $$
  which implies that
  $$
   0\le t\le\tan \a \;  d(P_{\pa H}y,\pa\Sigma),
  $$
  which is the definition of $y\in\Lambda$.

  \medskip
  
  \noindent {\it Step three}: We address the hydrophilic case $\pi/2\le\theta_{min}\le\theta_{max}<\pi$ by showing that
  \begin{equation}
    \label{pt1 inclusion hydrophilic}
  \Om\cup\Lambda\subseteq A\,,
  \end{equation}
  where now $\Lambda$ is as in \eqref{lambda hydrophilic}. As before, we just need to exclude the existence of $y\in\Om\cup\Lambda$ such that $\dist(y,M)=|x_0-y|$ for some $x_0\in\bd(M)$. Looking at \eqref{pt1 3}, if $y\in\Om$, then $y_{n+1}>0$, which implies $\a<\pi/2$. If $y\in\Om$, but $\p y\notin\Si$, then the closest point has to happen at $M$. Therefore, we can assume that $\p y\in\Si$ and thus $\a>0$. Since $\a\in(0,\pi/2)$, and $\theta(x_0)\in[\pi/2,\pi)$, we conclude that $\theta(x_0)-\a\in(0,\pi)$, $\sin(\theta(x_0)-\a)>0$, and thus find a contradiction with the second condition in \eqref{pt1 3}. This proves that $\Om\subseteq A$. Now let us consider the case of $y\in\Lambda$. By construction, $0\le\a<\theta_{min}$ so that
  \[
  0<\theta_{min}-\a\le \theta(x_0)-\a\le \theta(x_0)<\pi
  \]
  which, again, contradicts $\sin(\theta(x_0)-\a)\le 0$.  
  
  \medskip
  
  \noindent {\it Step four}: Let us now assume that $\Om$ satisfies \eqref{hk inq sub proof} as an equality. This means that \eqref{pt1 1} holds as an equality. By Lemma \ref{lemma montiel ros argument} and since $\Om$ is connected, $M\subseteq\pa B_r(x)$ for some $r>0$ and $x\in\R^{n+1}$. The facts that $\bd(M)=M\cap H$ and $\theta(x)\in(0,\pi)$ for every $x\in\bd(M)$ force $\bd(M)$ to be a non-degenerate sphere and $\theta$ to be constant along $\bd(M)$. 
  
  Let us show, conversely, that if $\Om$ is the intersection of an Euclidean ball with $H$ in such a way that $\theta=\theta_{min}=\theta_{max}\in(0,\pi)$, then equality holds in \eqref{hk inq sub proof}. Let us notice that, independently from the value of $\theta$, we always have that
  \begin{equation}
    \label{pt1 equa}
      \Om=H\cap B_r(r\cos(\theta)\,e_{n+1})\,.
  \end{equation}
  Since $M$ is always a spherical cap of radius $r$, we have
  \[
  \int_M\frac{n}{H_M}=r\,\H^n(M)\,.
  \]
  Notice also that $\Lambda$ is always the cone with center at $r\cos(\theta)\,e_{n+1}$ spanned by $B_r(r\cos(\theta)\,e_{n+1})\cap\pa H$. In the hydrophobic case,
  \[
  \Lambda\subseteq\Om\qquad\mbox{with}\qquad \Om\setminus\Lambda=\bigcup_{0<\l<1}\l\,M\,;
  \]
  in the hydrophilic case,
  \[
  \Lambda\cap\Om=\emptyset\qquad\mbox{with}\qquad \Om\cup\Lambda=\bigcup_{0<\l<1}\l\,M\,.
  \] 
  Since in both cases
  \[
  \Big|\bigcup_{0<\l<1}\l\,M\Big|=\int_0^r\,\H^n((\rho/r)M)\,d\rho=
  \int_0^r\,(\rho/r)^n\,d\rho\,\H^n(M)
  =\frac{r\,\H^n(M)}{n+1}
  \] 
  we conclude that \eqref{hk inq sub proof} always holds as an equality.
\end{proof}







\bibliography{ref}

\newcommand{\etalchar}[1]{$^{#1}$}
\begin{thebibliography}{DMMN18}
\ifx \showCODEN  \undefined \def \showCODEN #1{CODEN #1}  \fi
\ifx \showISBN   \undefined \def \showISBN  #1{ISBN #1}   \fi
\ifx \showISSN   \undefined \def \showISSN  #1{ISSN #1}   \fi
\ifx \showLCCN   \undefined \def \showLCCN  #1{LCCN #1}   \fi
\ifx \showPRICE  \undefined \def \showPRICE #1{#1}        \fi
\ifx \showURL    \undefined \def \showURL {URL }          \fi
\ifx \path       \undefined \input path.sty               \fi
\ifx \ifshowURL \undefined
     \newif \ifshowURL
     \showURLtrue
\fi

\bibitem[AK81]{azzam1981regularitatseigenschaften}
A~Azzam and E~Kreyszig.
\newblock {\"U}ber regularit{\"a}tseigenschaften der l{\"o}sungen von
  anfangs-randwertproblemen f{\"u}r parabolische gleichungen.
\newblock {\em Mathematische Nachrichten}, 101\penalty0 (1):\penalty0 141--151,
  1981.

\bibitem[Bre13]{brendle}
S.~Brendle.
\newblock Constant mean curvature surfaces in warped product manifolds.
\newblock {\em Publ. Math. Inst. Hautes \'Etudes Sci.}, 117:\penalty0 247--269,
  2013.
\newblock \showISSN{0073-8301}.
\newblock \ifshowURL {\showURL
  \path|http://dx.doi.org.ezproxy.lib.utexas.edu/10.1007/s10240-012-0047-5|}\fi.

\bibitem[CM07a]{caffarelli2007capillary1}
Luis~A Caffarelli and Antoine Mellet.
\newblock Capillary drops: contact angle hysteresis and sticking drops.
\newblock {\em Calculus of Variations and Partial Differential Equations},
  29\penalty0 (2):\penalty0 141--160, 2007.

\bibitem[CM07b]{caffarelli2007capillary}
Luis~A Caffarelli and Antoine Mellet.
\newblock Capillary drops on an inhomogeneous surface.
\newblock {\em Contemporary Mathematics}, 446:\penalty0 175--202, 2007.

\bibitem[CM17]{ciraolomaggi}
Giulio Ciraolo and Francesco Maggi.
\newblock On the shape of compact hypersurfaces with almost-constant mean
  curvature.
\newblock {\em Communications on Pure and Applied Mathematics}, 70\penalty0
  (4):\penalty0 665--716, 2017.

\bibitem[DM19]{delgadino2019alexandrov}
Matias~Gonzalo Delgadino and Francesco Maggi.
\newblock Alexandrov\'s theorem revisited.
\newblock {\em Anal. PDE}, 12\penalty0 (6):\penalty0 1613--1642, 2019.

\bibitem[DMMN17]{delgadinomaggimihailaneumayer}
M.~G. Delgadino, F.~Maggi, C.~Mihaila, and R.~Neumayer.
\newblock Bubbling with $l^2$-almost constant mean curvature and an
  {A}lexandrov-type theorem for crystals.
\newblock 2017.

\bibitem[DMMN18]{delgadino2018bubbling}
Matias~G Delgadino, Francesco Maggi, Cornelia Mihaila, and Robin Neumayer.
\newblock Bubbling with l2-almost constant mean curvature and an
  alexandrov-type theorem for crystals.
\newblock {\em Archive for Rational Mechanics and Analysis}, 230\penalty0
  (3):\penalty0 1131--1177, 2018.

\bibitem[DRKS20]{de2020uniqueness}
Antonio De~Rosa, Slawomir Kolasi\'nski, and Mario Santilli.
\newblock Uniqueness of critical points of the anisotropic isoperimetric
  problem for finite perimeter sets.
\newblock {\em Archive for Rational Mechanics and Analysis}, 238\penalty0
  (3):\penalty0 1157--1198, 2020.

\bibitem[Fin86]{Finn}
R.~Finn.
\newblock {\em Equilibrium Capillary Surfaces}, volume 284 of {\em Die
  Grundlehren der mathematischen Wissenschaften}.
\newblock Springer-Verlag New York Inc., New York, 1986.
\newblock xi+245 pp.

\bibitem[HHW20]{HHW20}
Robert Haslhofer, Or~Hershkovits, and Brian White.
\newblock Moving plane method for varifolds and applications.
\newblock {\em arXiv}, preprint, 2020.

\bibitem[HM77]{huh1977effects}
C~Huh and SG~Mason.
\newblock Effects of surface roughness on wetting (theoretical).
\newblock {\em Journal of colloid and interface science}, 60\penalty0
  (1):\penalty0 11--38, 1977.

\bibitem[JN20]{julin2020quantitative}
Vesa Julin and Joonas Niinikoski.
\newblock Quantitative alexandrov theorem and asymptotic behavior of the volume
  preserving mean curvature flow.
\newblock {\em arXiv preprint arXiv:2005.13800}, 2020.

\bibitem[JWXZ22]{jia2022heintze}
Xiaohan Jia, Guofang Wang, Chao Xia, and Xuwen Zhang.
\newblock {Heintze-Karcher inequality and capillary hypersurfaces in a wedge}.
\newblock {\em arXiv preprint arXiv:2209.13839}, 2022.

\bibitem[JXZ23]{jia2023heintze}
Xiaohan Jia, Chao Xia, and Xuwen Zhang.
\newblock {A Heintze--Karcher-Type Inequality for Hypersurfaces with Capillary
  Boundary}.
\newblock {\em The Journal of Geometric Analysis}, 33\penalty0 (6):\penalty0
  177, 2023.

\bibitem[Lie86]{Lieberman}
Gary~M. Lieberman.
\newblock Mixed boundary value problems for elliptic and parabolic differential
  equations of second order.
\newblock {\em J. Math. Anal. Appl.}, 113\penalty0 (2):\penalty0 422--440,
  1986.
\newblock \showISSN{0022-247X}.

\bibitem[Mag12]{maggi2012sets}
Francesco Maggi.
\newblock {\em Sets of finite perimeter and geometric variational problems: an
  introduction to Geometric Measure Theory}.
\newblock Number 135. Cambridge University Press, 2012.

\bibitem[MM16]{maggi2016shape}
Francesco Maggi and Cornelia Mihaila.
\newblock On the shape of capillarity droplets in a container.
\newblock {\em Calculus of Variations and Partial Differential Equations},
  55\penalty0 (5):\penalty0 1--42, 2016.

\bibitem[MR91]{montielros}
S.~Montiel and A.~Ros.
\newblock Compact hypersurfaces: the {A}lexandrov theorem for higher order mean
  curvatures.
\newblock In {\em Differential geometry}, volume~52 of {\em Pitman Monogr.
  Surveys Pure Appl. Math.}, pages 279--296. Longman Sci. Tech., Harlow, 1991.

\bibitem[PM15]{dephilippismaggiCAP-ARMA}
G~De Philippis and Francesco Maggi.
\newblock {Regularity of free boundaries in anisotropic capillarity problems
  and the validity of Young's law}.
\newblock {\em Archive for Rational Mechanics and Analysis}, 216\penalty0
  (2):\penalty0 473--568, 2015.

\bibitem[Ros87]{ros87ibero}
A.~Ros.
\newblock Compact hypersurfaces with constant higher order mean curvatures.
\newblock {\em Rev. Mat. Iberoamericana}, 3\penalty0 (3-4):\penalty0 447--453,
  1987.

\bibitem[S{\etalchar{+}}83]{simon1983lectures}
Leon Simon et~al.
\newblock {\em Lectures on geometric measure theory}.
\newblock The Australian National University, Mathematical Sciences Institute,
  Centre, 1983.

\bibitem[Tay77]{taylor1977boundary}
Jean~E Taylor.
\newblock Boundary regularlty for solutions to various capillarity and free
  boundary problems.
\newblock {\em Communications in Partial Differential Equations}, 2\penalty0
  (4):\penalty0 323--357, 1977.

\bibitem[Wen80]{wente1980}
Henry~C. Wente.
\newblock The symmetry of sessile and pendent drops.
\newblock {\em Pacific J. Math.}, 88\penalty0 (2):\penalty0 387--397, 1980.
\newblock \ifshowURL {\showURL
  \path|http://projecteuclid.org/euclid.pjm/1102779522|}\fi.

\bibitem[Whi09]{whiteCurrents}
Brian White.
\newblock Currents and flat chains associated to varifolds, with an application
  to mean curvature flow.
\newblock {\em Duke Math. J.}, 148\penalty0 (1):\penalty0 41--62, 2009.
\newblock \showISSN{0012-7094}.

\end{thebibliography}
\bibliographystyle{is-alpha}

\end{document}